\newcounter{main}
\newtheorem{theorem}[main]{Theorem}
\newtheorem{proposition}[main]{Proposition}
\newtheorem{lemma}[main]{Lemma}
\newtheorem{corollary}[main]{Corollary}
\theoremstyle{definition}
\newtheorem{definition}[main]{Definition}
\newtheorem{notation}[main]{Notation}
\newtheorem{note}[main]{Note}
\newtheorem{example}[main]{Example}
\DeclarePairedDelimiter{\ceil}{\lceil}{\rceil}
\DeclarePairedDelimiter{\floor}{\lfloor}{\rfloor}
\DeclarePairedDelimiter{\inn}{\langle}{\rangle}
\newcommand{\cl}[1]{\overline{#1}}
\DeclareMathOperator{\im}{im}
\newcommand{\EJA}{\text{\textbf{EJA}}\xspace}
\newcommand{\id}{\mathrm{id}}
\newcommand{\opp}{\mathrm{op}}
\newcommand{\sa}{\mathrm{sa}}
\newcommand\R{\mathbb{R}}
\newcommand\N{\mathbb{N}}
\newcommand\after{\mathbin{\circ}}
\DeclareMathOperator{\Idem}{Idem}
\def\titlerunning{Pure Maps between Euclidean Jordan Algebras}
\title{\titlerunning}
\author{Abraham Westerbaan
\institute{Radboud Universiteit}
\email{bram@westerbaan.name} \and
    Bas Westerbaan
\institute{Radboud Universiteit}
\email{bas@westerbaan.name} \and
    John van de Wetering
\institute{Radboud Universiteit}
\email{wetering@cs.ru.nl}
}
\begin{document}

\maketitle

\begin{abstract}
We propose a definition of purity for positive linear maps between 
Euclidean Jordan Algebras (EJA) that generalizes the notion of 
purity for quantum systems. 
We show that this definition of purity is closed under composition
and taking adjoints and thus that the pure maps form a dagger category
(which sets it apart from other possible definitions.)
In fact, from the results presented in this paper, it follows that
    the category of EJAs with positive contractive
linear maps is a \emph{$\dagger$-effectus}, a type of structure originally
defined to study von Neumann algebras in an abstract categorical setting.
In combination with previous work this
characterizes EJAs as the most general systems allowed in a 
generalized probabilistic theory that is simultaneously a $\dagger$-effectus.
Using the dagger structure we get a notion of $\dagger$-positive maps 
of the form~$f = g^\dagger \circ g$. We give a complete characterization of
the pure $\dagger$-positive maps and show that these correspond precisely to
the Jordan algebraic version of the sequential product~$(a,b)\mapsto \sqrt{a}b\sqrt{a}$. 
The notion of $\dagger$-positivity therefore characterizes the sequential product.
\end{abstract}

\section{Introduction}
A commonly used technique when studying the foundations of quantum theory, is to consider generalized theories that only exhibit some part of the features of conventional quantum theory. 
In this way, it becomes more clear what specific properties of quantum theory lead to certain structure. 
One of the first generalized quantum theories to be studied were the Euclidean Jordan algebras (EJAs)~\cite{jordan1933}. 
Besides the matrix algebras of complex self-adjoint matrices of conventional quantum theory, other examples of EJAs are the set of real symmetric matrices of real-valued quantum theory, or the set of self-adjoint matrices over the quaternions. 
Quite soon after the introduction of EJAs, a full characterization of EJAs was given~\cite{jordan1993algebraic} that showed that these examples almost completely exhaust the possibilities. 
The Koecher--Vinberg theorem~\cite{koecher1957positivitatsbereiche} is a major result stating that any ordered vector space with a homogeneous self-dual positive cone is a Euclidean Jordan algebra. 
It is this theorem that explains the ubiquity of EJAs in reconstructions of quantum theory~\cite{barnum2014higher,wetering2018reconstruction,masanes2014entanglement,wilce2016royal,selby2018reconstructing,wetering2018sequential}.
Understanding the differences and similarities between regular quantum theory as described by complex matrix algebras, and the more general EJAs is an active topic of research. In this paper we will study the notion of \emph{pure maps} in EJAs and show that they have many of the same properties as those found in quantum theory.

The concept of purity has proven very useful in the field of quantum
information. In the context of states, it can be considered a
resource in various protocols and computations~\cite{devetak2005distillation,brandao2013resource} and the
possibility of purification of states is considered to be one of
the characteristic features differentiating quantum theory from its
classical counterpart~\cite{chiribella2010probabilistic,chiribella2011informational}.
While there is a generally accepted definition of purity for states,
when it comes to quantum channels there are several proposed definitions of
purity in play, each of which has its drawbacks.
There is for instance the definition of \emph{atomicity}
used in reconstructions of quantum theory~\cite{chiribella2011informational}.
This definition is very general in that it can be defined for any generalized probabilistic theory~\cite{barrett2007information},
but it has the drawback that even a canonical map like the identity will not always be pure.
Purity can also be defined in terms of leaks~\cite{selby2018reconstructing},
or using orthogonal factorization~\cite{cunningham2017purity}.
These definitions work well when considering finite dimensional spaces with a well-behaved sequential product,
but when considering more general quantum systems like von Neumann algebras, they fail to reproduce many desirable properties.
More fundamentally,
    both definitions require a notion of tensor product,
    the existence of which for EJAs is an open question~\cite{barnum2016composites}.
Instead we will use a definition originally introduced for studying pure maps between von Neumann algebras~\cite{bramthesis,westerbaan2016paschke,westerbaan2016universal},
and that also defines the main concept of purity in \emph{effectus theory}~\cite{cho2015introduction,basthesis}.
While the other definitions of purity are related to the existence of \emph{purifications} of states, 
this definition is related to the existence of \emph{Paschke dilations}~\cite{westerbaan2016paschke}.
Following this definition, a map is pure when it is a composition of a \emph{filter} and a \emph{corner}. 
Filters represent a certain restricted class of measurements
    (named after polarization filters).
Corners correspond to the act of restricting a system to a subsystem.
Filters and corners are defined by universal properties,
    and thus this definition of purity makes sense even in the very
    general setting of effectus theory.
In effectus theory, filters and corners are known as \emph{quotients}
and \emph{comprehensions} respectively, since they correspond to those logical
operations (with the reversed direction of maps).
Quotients and comprehension appear in a multitude of categories
    (outside effectus theory) as a chain of adjunctions \cite{cho2015quotient}.

In this paper we show that Euclidean Jordan algebras indeed
have pure maps as described above, and furthermore that the
pure maps can be organised in a dagger category. This is
noteworthy for several reasons.
First, the other definitions of purity on
quantum systems are in general not closed under composition,
or taking the adjoint. Hence this shows that this definition
of purity seems to be the right one for EJAs.
Second, that the pure maps form a dagger category, which is the main result
needed to show that the category $\EJA_{psu}$ of EJAs with positive subunital
maps between them is a \emph{$\dagger$-effectus}~\cite{basthesis}.
The notion of $\dagger$-effectus was introduced to give an abstract
categorical framework for studying von~Neumann algebras, and in fact
the only known non-trivial example of a $\dagger$-effectus was
the category of von~Neumann algebras. This paper therefore 
establishes that $\EJA_{psu}$ is indeed quite similar to 
the category of von~Neumann algebras.
Finally, it was shown in previous work by the third 
author~\cite{wetering2018reconstruction} that a
generalised probabilistic theory which is also a $\dagger$-effectus,
must be a subcategory of $\EJA_{psu}$. Hence, 
this paper proves the converse result, establishing a 
novel characterization of the category of Euclidean Jordan algebras.

To prove that the composition of pure maps is pure,
    we have found a new generalization
    of the \emph{polar decomposition theorem} to EJAs,
    which  might be of independent interest.

Motivated by effectus theory, we also undertake a study of the
\emph{possibilistic} structure of the maps between EJAs.
This means that we do not look at the actual probabilities, but
instead only consider whether probabilities are nonzero.
In this way we define a duality between maps that we call
\emph{$\diamond$-adjointness}, that generalises
the regular adjoints present by the Hilbert space structure of EJAs. 
We can similarly also define $\diamond$-self-adjointness and $\diamond$-positivity. 
We completely characterize the pure $\diamond$-positive maps and show that they exactly correspond to a generalization 
of the sequential product maps $b\mapsto \sqrt{a}b\sqrt{a}$ in von Neumann algebras, (and we'll deduce from this 
that pure $\diamond$-positivity and $\dagger$-positivity coincide.)
This result can be seen as a characterization of the sequential product like the ones given in~\cite{gudder2008characterization,wetering2018characterisation,westerbaan2016universal}.

The paper is structured as follows. In the next section we review
some of the basic structure present in Euclidean Jordan algebras.
We also refer the reader to the appendix for a concise largely self-contained
introduction to EJAs. In section \ref{sec:filtercorner} we introduce
the notion of \emph{filters} and \emph{corners} and study our
proposed notion of purity. Finally in section \ref{sec:diamond} we
use the possibilistic $\diamond$ structure to characterize the
sequential product maps as the unique pure $\diamond$-positive maps.

\section{Preliminaries}\label{sec:eja}
We begin by giving a definition of Euclidean Jordan algebras and
some motivating examples. Afterwards we will review some of the
basic theory necessary to develop our results.

\begin{definition}
	A \emph{Jordan algebra} $(E,*,1)$ is a real unital commutative
	(possibly non-associative) algebra satisfying the \emph{Jordan
	identity}: $(a*b)*(a*a) = a*(b*(a*a))$.
    We call a Jordan Algebra~$E$ with 
       an inner product $\left<\,\cdot\,,\,\cdot\,\right>$
        a \emph{Euclidean Jordan algebra} (EJA)
    when the inner product turns~$E$ into a real Hilbert space
	with
	$\inn{a*b,c}=\inn{b,a*c}$ for all~$a,b,c\in E$.
\end{definition}
\begin{note}
In the original (and frequently used) definition,
    one additionally requires a Euclidean Jordan algebra to be
    finite dimensional.
The possibly infinite-dimensional version we use
    is also called a JH-algebra \cite{chu2017infinite}, although we additionally require unitality.
\end{note}
\begin{example}
	Let $F$ be the field of real numbers, the field of complex numbers, or the division algebra of the quaternions. Let $A\in M_n(F)$ be an $n\times n$ matrix over $F$. We call $A$ \emph{self-adjoint}
    when~$A_{ij} = \cl{A_{ji}}$ where~$\cl{\lambda}$ denotes the
    standard involution on~$F$. We let the set of self-adjoint
    matrices be denoted by~$M_n(F)^\sa$. This set is an Euclidean Jordan Algebra
    with the Jordan product~$A*B:= \frac{1}{2}(AB+BA)$,
    inner product~$\inn{A,B}:= \tr(AB)$
    and identity matrix as unit.

    If~$n=3$ and~$F$ is the algebra of octonions,
        then the algebra~$M_3(F)^\sa$ is also
        a Euclidean Jordan algebra,
        which is called \emph{exceptional}.
\end{example}

\begin{example}
For any real Hilbert space $H$ (possibly infinite-dimensional),
    the set~$E:=H\oplus \R$
    is a Euclidean Jordan Algebra
    with~$(a,t)*(b,s):= (sa+tb, \inn{a,b}+ts)$
    and~$\inn{(a,t),(b,s)} = \inn{a,b}+ts$.
    Such EJAs are called \emph{spin-factors}.
\end{example}

The Jordan--von Neumann--Wigner classification theorem~\cite{jordan1993algebraic}
asserts that any finite-dimensional EJA is the direct sum of the
finite-dimensional examples given above. This statement is still
true for our possibly infinite-dimensional class of EJAs, but then
the spin-factors must also be allowed to be infinite-dimensional
(a proof of this fact can be found in the appendix as
corollary~\ref{appendixjnw}).

\begin{definition}
    Let $E$ be an EJA. We call $a\in E$ \emph{positive}
    (and write $a\geq 0$) when there exists a~$b\in E$
    such that~$a=b*b$. We write $a\geq c$ when $a-c\geq 0$.
    We call a linear map between EJAs $f\colon E\rightarrow F$ \emph{positive}
    when~$f(a) \geq 0$ for all~$a \geq 0$.
    A map~$f$ is called~\emph{unital} whenever~$1_F=f(1_E)$ and
    \emph{subunital} provided~$f(1_E)\leq 1_F$.
	We let $\EJA_{psu}$ denote the category of Euclidean Jordan
	algebras with positive linear subunital maps between them.
\end{definition}

For the matrix algebras the definition of positivity coincides with
the regular definition of a positive matrix. For a spin
factor~$H\oplus \R$ we have $(a,t)\geq 0$ iff $t\geq \norm{a}_2$, so that
the set of positive elements forms the positive light-cone in a
Lorentzian space-time.

\begin{definition}
    A positive unital linear  map~$\omega\colon E\rightarrow \R$
        is called a \emph{state} on~$E$.
        An \emph{effect} is a positive
	subunital linear map $a\colon \R\rightarrow E$ and corresponds
        to an~$a\in E$ with $0\leq a\leq 1$.
    We will also call such an element~$0 \leq a \leq 1$, an effect.
\end{definition}
Because $E$ is a Hilbert space, any map $f:E\rightarrow \R$ is of
the form $f(a)=\inn{a,b}$ for some specific $b$. As we will see
that the inner product of positive elements is positive
(proposition \ref{prop:selfduality}),
any state
on an EJA will be given by a positive~$a\in E$ with $\inn{a,1}=1$.
For an effect $a$ we write $a^\perp:= 1-a$ to denote its
\emph{complement}.

We wish to define a notion of purity for the category $\EJA_{psu}$.
The canonical example of a pure map in a quantum system is the L\"uders map $B\mapsto ABA$ for some fixed matrix $A$,
and hence we would desire the generalization of these maps to EJAs to also be pure.
It also seems reasonable that any isomorphism should be pure. 
As we are working on a Hilbert space, every map has an adjoint, and it makes sense to let the adjoint of a pure map be pure again.
Finally, as we want our pure maps to be closed under composition, any composition of the examples here above should also be pure.
We will see how to make these ideas exact, and in particular how
to generalize the idea of a L\"uders map to
arbitrary EJAs. But first we need to know more about the structure
of EJAs.

\begin{theorem}\label{theor:chu}
	Let $(E,*,1)$ be an EJA. Sums of positive elements are again positive, so that the set of positive elements forms a \emph{cone}. Furthermore:
	\begin{enumerate}
		\item The unit $1$ is a \emph{strong Archimedean unit}: for all $a\in E$ there exists $n\in \N$ such that $-n1\leq a\leq n1$, and if $a\leq \frac{1}{n} 1$ for all $n\in \N$,$n\neq 0$ then $a\leq 0$.
		\item As a consequence the algebra is an \emph{order unit space}
            so that $\norm{a}:=\inf\{r\in \R~;~ -r1\leq a\leq r1\}$
            is a norm, called the \emph{order unit norm}, for which $a\leq \norm{a}1$.
		\item The order unit norm defined above, and the norm induced by the inner product are equivalent so that the topologies they induce are the same. The space is therefore also complete in the order unit norm topology.
		\item The space is \emph{self-dual}: $a\geq 0$ if and only if $\inn{a,b}\geq 0$ for all $b\geq 0$.
        \item The algebra $E$ is bounded directed complete and every state is normal so that $E$ is in fact a \emph{JBW-algebra} \cite{hanche1984jordan}.
	\end{enumerate}
\end{theorem}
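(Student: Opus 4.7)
The plan is to extract every item from the spectral theorem for EJAs, which (as developed in the appendix) provides for each $a\in E$ a (possibly infinite but norm-convergent) decomposition $a=\sum_i \lambda_i p_i$ into pairwise orthogonal idempotents summing to~$1$, with the characterization $a\geq 0$ iff all $\lambda_i\geq 0$. Together with the adjointness identity $\inn{a*b,c}=\inn{b,a*c}$ and the Shirshov--Cohn theorem (used to reduce finite calculations involving two elements to the associative case), this will be enough. I would begin by establishing or citing the spectral theorem, and then work through the six assertions in order.

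The cone property and item~1 are immediate: for $a,b\geq 0$ with spectral decompositions, the subalgebra generated by $a$ and $b$ is special by Shirshov--Cohn, so $a+b\geq 0$ reduces to the familiar fact for self-adjoint operators; boundedness of the spectrum gives $-n1\leq a\leq n1$ for some $n\in\N$; and if $a\leq \frac{1}{n}1$ for every $n\geq 1$ then the spectrum of~$a$ lies in $(-\infty,0]$, so $a\leq 0$. Items~2 and~3 then follow routinely: the order-unit norm is well-defined from~1 by the standard construction, and equivalence with the Hilbert-space norm is obtained by bounding $\inn{a,a}\leq \norm{a}^2\inn{1,1}$ on one side, and, for the reverse, noting that $\norm{a}$ equals the largest $|\lambda_i|$ and choosing the corresponding spectral projection $p_i$ to get $\inn{a,a}\geq \lambda_i^2\inn{p_i,p_i}$, which is bounded below by a positive constant times $\norm{a}^2$ after checking that idempotents have inner-product norm bounded away from zero.

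The substantive item is~4 (self-duality). Given the spectral theorem it suffices, after expanding $a,c\geq 0$ as non-negative combinations of orthogonal idempotents, to show $\inn{p,q}\geq 0$ for any two idempotents $p,q\in E$. For this I would use the adjointness identity: writing $p=p*p$ gives $\inn{p,q}=\inn{p*p,q}=\inn{p,p*q}$, and iterating produces $\inn{p,q}=\inn{p,L_p^n q}$ where $L_p$ is the (self-adjoint, bounded) multiplication operator by~$p$. Since $L_p$ acts as an orthogonal projection onto an idempotent subalgebra in the special case (by Shirshov--Cohn on the algebra generated by $p$ and $q$), the iterates converge to an element of the form $s*s$, making $\inn{p,q}\geq 0$. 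This is the step I expect to be the main obstacle, because it is the one place where the order and Hilbert structures must be genuinely reconciled rather than derived formally from the spectral theorem.

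Finally, for item~5, bounded directed completeness follows by observing that an increasing net $(a_i)$ bounded above has a weak-Hilbert-space cluster point $a_\infty$ by weak compactness of order-bounded sets, and self-duality (item~4) lets me upgrade this to an order supremum: any upper bound $b$ satisfies $\inn{b-a_i,c}\geq 0$ for all $c\geq 0$, which passes to the weak limit. Normality of every state is then automatic, since each state is implemented by inner product with a positive element (by the Hilbert-space Riesz representation combined with self-duality), and the inner product is weakly continuous in its first argument. This gives the JBW-algebra conclusion.
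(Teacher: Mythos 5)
Your overall architecture (spectral decomposition first, then self-duality, then everything else) matches the paper's, and your treatments of items 2, 3 and 5 are essentially the paper's arguments. But there are two genuine gaps. First, your proof of the cone property via Shirshov--Cohn does not work: specialness of the subalgebra generated by $a$ and $b$ gives an embedding into $A^+$ for some \emph{abstract} associative algebra $A$, which carries no positivity structure, so there is no ``familiar fact for self-adjoint operators'' to appeal to. (The stronger statement, that the subalgebra embeds into $B(H)_{\sa}$, is a theorem about JB-algebras and hence presupposes the order-unit axioms you are trying to prove --- a circularity. In a general special Jordan algebra a sum of squares need not be a square.) The paper avoids this entirely by proving self-duality \emph{first} and then deducing closure of the positive cone under addition from $\inn{a+c,b}=\inn{a,b}+\inn{c,b}\geq 0$; since you already have self-duality as item~4, the fix is to reorder your proof, but as written your first step is unsound. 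Second, the step you yourself flag as the main obstacle --- $\inn{p,q}\geq 0$ for idempotents $p,q$ --- is left unproven, and your iteration $\inn{p,q}=\inn{p,L_p^nq}$ converges to $\inn{p,Q_pq}$, whose positivity is not obviously easier. The clean argument is operator-theoretic: the identity $L_{a*(b*c)}=L_aL_{b*c}+L_bL_{c*a}+L_cL_{a*b}-L_bL_aL_c-L_cL_aL_b$ with $a=b=c=p$ gives $2L_p^3-3L_p^2+L_p=0$ for idempotent $p$, so the self-adjoint operator $L_p$ has spectrum in $\{0,\tfrac12,1\}$ and is positive, whence $\inn{p,b*b}=\inn{L_pb,b}\geq 0$. (The paper outsources exactly this fact to a citation.)

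A further concern is that you treat the spectral theorem as an input, whereas for possibly infinite-dimensional EJAs it is most of the work, and the standard references prove it for JB-algebras, i.e.\ already assuming the order-unit axioms of items 1--2 --- another potential circularity. The paper's appendix builds it from scratch: continuity of the product via the uniform boundedness principle, power-associativity, Kadison's representation theorem to identify the closed subalgebra $C(a)$ with $C(X)$, and then the Hilbert-space structure (Riesz representation of the point evaluations $\delta_x$) to show $X$ is discrete and hence \emph{finite}. In particular the decomposition $a=\sum_{i=1}^n\lambda_ip_i$ is finite, not merely norm-convergent; this finite-rank property is itself a nontrivial consequence of the Euclidean structure and is used elsewhere in the paper. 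Your one unflagged dependency in item~3 --- that nonzero idempotents have Hilbert norm bounded away from zero --- also needs an argument; the paper gets it by showing that a sequence of orthogonal idempotents with summable norms would produce an element dominating $n1$ for every $n$.
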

\begin{proof}
    Since to our knowledge there isn't a textbook solely dedicated to infinite-dimensional EJAs we supply a relatively self-contained proof of these claims in the appendix.
\end{proof}

\begin{definition}
    For each $a\in E$ let $L_a: E\rightarrow E$ be the left-multiplication
    operator of $a$, that is: $L(a)b:= a*b$. The seemingly oddly named but important
    \emph{quadratic representation} of the Jordan algebra is
    a linear map~$Q_a\colon E \to E$ for each~$a \in E$, defined by~$Q_a
    := 2L(a)^2 - L(a^2)$.
\end{definition}
The definition of the quadratic representation~$Q_a$ might look
arbitrary, but in the case of a matrix algebra with the standard
Jordan multiplication they are exactly the L\"uders maps $Q_AB = ABA$. The
quadratic representation maps will therefore act as the Jordan
equivalent of these maps in quantum theory. The following proposition
establishes a few basic properties of the maps $Q_a$, that can be
easily checked to hold for a matrix algebra.

\begin{proposition}\label{prop:quadraticrep} For any EJA~$E$ and~$a,b,c \in E$, the following hold.
    \begin{enumerate}
        \item $Q_1 = \id$.
        \item $Q_a1 = a^2$.
        \item $\inn{Q_ab, c} = \inn{b,Q_ac}$.
        \item $Q_ab = 0 \iff Q_ba = 0 \iff a*b=0$.
        \item $Q_{Q_ab} = Q_aQ_bQ_a$ (this is known in the literature as the \emph{fundamental equality}).
        \item $Q_a$ is invertible if and only $a$ is invertible. In that case, we have $Q_a^{-1} = Q_{a^{-1}}$.
        \item $Q_a$ is a positive operator(even when $a$ is not positive). If it is invertible, it is an order automorphism. 
    \end{enumerate}
\end{proposition}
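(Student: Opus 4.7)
The plan is to derive items 1, 2, 3 directly from the formula $Q_a b = 2\,a*(a*b) - a^2*b$, establish the fundamental equality (item 5) via Macdonald's theorem, and then harvest items 6 and 7 from (5); item 4 would be handled by a separate spectral and self-duality argument.

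Items 1 and 2 are one-line computations: $Q_1 b = 2b - b = b$ and $Q_a 1 = 2a^2 - a^2 = a^2$. Item 3 follows because the Euclidean condition $\inn{x*y,z} = \inn{y,x*z}$ makes each $L(x)$ self-adjoint, so $L(a)^2$, $L(a^2)$, and hence $Q_a$ are all self-adjoint.

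For item 5 I would invoke Macdonald's theorem: a Jordan polynomial identity in three variables that is linear in at least one of them holds in every Jordan algebra iff it holds in every special Jordan algebra. The equality $Q_{Q_a b}c = Q_a Q_b Q_a c$ has exactly this shape (linear in $c$), so it suffices to verify it in an arbitrary associative algebra with $x*y := \tfrac12(xy+yx)$. There a short expansion gives $Q_a b = aba$, and both sides reduce to $(aba)\,c\,(aba)$. I expect this to be the main obstacle: Macdonald's theorem itself is nontrivial and would need to be either cited carefully or proved in the appendix.

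With (5) in hand, item 6 comes quickly. Power associativity of the subalgebra generated by $a$ yields $a*a^{-1} = 1$ and $a^2*a^{-1} = a$, so $Q_a a^{-1} = 2a - a = a$; applying (5) gives $Q_a Q_{a^{-1}} Q_a = Q_{Q_a a^{-1}} = Q_a$ together with the symmetric equation, forcing $Q_{a^{-1}} = Q_a^{-1}$. Conversely, if $Q_a$ is invertible then $a^2 = Q_a 1$ is invertible in $E$, hence so is $a$. For item 7, the special-algebra computation gives $Q_a(c*c) = a c^2 a = (ca)^*(ca) \geq 0$, and the exceptional factor $M_3(\mathbb{O})^\sa$ can be treated by spectrally decomposing $a = \sum \lambda_i p_i$ and reading off positivity of $Q_a$ from the Peirce decomposition; item 6 then upgrades invertibility of $Q_a$ to an order automorphism. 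Finally, for item 4 the defining formula only yields $Q_a b = -a^2 * b$ when $a*b = 0$, so the implication has to be completed using self-duality (theorem~\ref{theor:chu}) together with the spectral theorem to conclude that $a*b = 0$ forces the underlying spectral projections of $a$ and $b$ to be mutually annihilating, whence $a^2 * b = 0$ as well; the converse direction then comes from combining the identity $\inn{Q_a b, b} = 2\norm{a*b}^2 - \inn{a^2, b*b}$ with the positivity established in (7).
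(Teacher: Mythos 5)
The paper disposes of items 4--7 by citation (Alfsen--Shultz for 4, 6, 7; Macdonald's theorem via Hanche-Olsen--St{\o}rmer for 5), so your plan to actually prove them is more ambitious; items 1--3 and the Macdonald route for item 5 are fine and match the paper's intent. But three of your steps have real problems. For item 6, the two relations $Q_aQ_{a^{-1}}Q_a=Q_a$ and $Q_{a^{-1}}Q_aQ_{a^{-1}}=Q_{a^{-1}}$ only say that each operator is a generalized inverse of the other; they do not force $Q_{a^{-1}}=Q_a^{-1}$ (take $Q_p=Q_{p^{-1}}$ replaced by any idempotent operator: $PRP=P$ and $RPR=R$ hold with $P=R$ a projection). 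You first need invertibility of $Q_a$, which you get from $Q_aa^{-2}=1$ (computed in the associative subalgebra $C(a)$) and hence $Q_aQ_{a^{-1}}^2Q_a=Q_{Q_aa^{-2}}=Q_1=\id$; only then may you cancel.

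For item 4, the equivalence as stated is simply false for non-positive elements: in $M_2(\C)^\sa$ take $a=\mathrm{diag}(1,-1)$ and $b$ the off-diagonal symmetric unit; then $a*b=0$ because the matrices anticommute, yet $Q_ab=aba=-b\neq 0$. The cited Alfsen--Shultz lemma assumes $a,b\geq 0$, and that is the only form the paper ever uses (in proposition~\ref{prop:EJAidempotent}). Your sketch --- ``$a*b=0$ forces the spectral projections of $a$ and $b$ to be mutually annihilating'' --- would prove the false general statement, so it cannot be right as written; the positivity hypothesis has to enter essentially, e.g.\ via $0=\inn{Q_ab,1}=\inn{b,a^2}$ together with self-duality. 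Finally, for item 7 your reduction to ``special factors plus the exceptional factor'' invokes the Jordan--von Neumann--Wigner classification, but in this paper that classification (corollary~\ref{appendixjnw}) is itself derived from proposition~\ref{prop:EJAidempotent} and hence from items 5 and 7 of the very proposition you are proving --- a circularity. You should either prove positivity of $Q_a$ directly at the JB-algebra level (as Alfsen--Shultz do), or use the Shirshov--Cohn theorem (the unital subalgebra generated by any two elements is special) to run your associative-algebra computation without appealing to the global classification.
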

\begin{proof}
    Points 1,2 and 3 are trivial. Point 4 can be found in~\cite[Lemma
    1.26]{alfsen2012geometry}. Point 5 is usually proven using MacDonalds
    theorem (see \cite[Theorem 2.4.13]{hanche1984jordan}),
    but see also \cite{fundident}.
    Point 6 is proven in~\cite[Lemma 1.23]{alfsen2012geometry} and finally
    7 is given by \cite[Theorem 1.25]{alfsen2012geometry}.
\end{proof}

Note that while the Jordan multiplication maps $L_a$ are not positive,
and therefore are not maps in the category $\EJA_{psu}$, the quadratic
representation maps $Q_a$ are positive and when $\norm{a}\leq 1$
the maps are subunital so that they do happen to lie in $\EJA_{psu}$.

As the fundamental equality $Q_{Q_ab} = Q_aQ_bQ_a$ will be important
in the proofs below, let's unfold it for matrix algebras.
The expression $Q_{Q_AB}C$ is equal to $(ABA)C(ABA)$, while
the right-hand side of the equation $Q_AQ_BQ_AC$ is equal to
$A(B(ACA)B)A$. Thus the equality follows (in this special case) from
associativity of matrix multiplication.

\begin{definition}
	We call an element $p$ of an EJA an \emph{idempotent} iff $p^2:=p*p=p$. An idempotent is automatically positive and below the identity. We call an idempotent \emph{atomic} if there is no non-zero idempotent strictly smaller than it. Two idempotents $p$ and $q$ are \emph{orthogonal} when $p*q=0$ or equivalently $\inn{p,q}=0$.
\end{definition}

In a matrix algebra the idempotents are precisely the projections.
The quadratic representation map~$Q_p$ is then the projection map~$Q_p(a) = pap$.
The following proposition again contains claims that are easily verified to hold true in any
matrix algebra, but which for general EJAs must be proven with a bit of care.

\begin{proposition}\label{prop:EJAidempotent}
    Let $p$ be an idempotent. Then $Q_p$ is idempotent, and furthermore for $0\leq a \leq 1$
    we have
    \begin{equation*}
        Q_pa\ =\ 0 \ \iff \ \inn{a,p}\ =\ 0 \quad\text{and} \quad Q_pa \ =\  a \ \iff\  a\ \leq\ p\ \iff\  p*a\ =\ a.
    \end{equation*}
\end{proposition}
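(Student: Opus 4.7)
The proposition splits into three claims. The plan is to dispatch the idempotency first, then prove the zero-characterisation, then close the three-way equivalence as a cycle.

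First, to see $Q_p$ is idempotent, I would specialise the fundamental equality of Proposition~\ref{prop:quadraticrep} (point 5): taking $a = p$ and $b = 1$ gives $Q_{Q_p 1} = Q_p Q_1 Q_p$, which by $Q_1 = \id$ and $Q_p 1 = p^2 = p$ collapses to $Q_p = Q_p^2$.

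Next, for the zero-characterisation, the key identity is $\inn{Q_p a, 1} = \inn{a, Q_p 1} = \inn{a, p}$, using self-adjointness of $Q_p$ (point 3) and $Q_p 1 = p$ (point 2). One direction is immediate. For the other, if $\inn{a,p}=0$ with $a \geq 0$, I want to conclude $Q_p a = 0$ from $Q_p a \geq 0$ and $\inn{Q_p a, 1} = 0$. This uses the general lemma that any $x \geq 0$ in an EJA with $\inn{x,1} = 0$ must vanish: indeed $x \leq \norm{x} 1$ by Theorem~\ref{theor:chu}(2), hence by self-duality $\inn{x,x} \leq \norm{x}\inn{1,x} = 0$, forcing $x = 0$. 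The positivity of $Q_p a$ itself is point 7 of Proposition~\ref{prop:quadraticrep}.

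For the three-way equivalence (with $0 \leq a \leq 1$), I would run the cycle $Q_p a = a \Rightarrow a \leq p \Rightarrow p*a = a \Rightarrow Q_p a = a$. The first implication is immediate: since $Q_p$ is positive and $a \leq 1$, $a = Q_p a \leq Q_p 1 = p$. The last implication is a direct calculation: if $p*a = a$ then $L_p^2 a = L_p a = a$, so $Q_p a = (2L_p^2 - L_{p^2})a = 2a - a = a$, using $p^2 = p$.

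The middle step $a \leq p \Rightarrow p*a = a$ is the main piece of content. I would reduce it to the zero-characterisation applied to the complementary idempotent $p^\perp = 1-p$. Since $p - a \geq 0$ and $p^\perp \geq 0$, self-duality gives $\inn{p-a, p^\perp} \geq 0$; but $\inn{p, p^\perp} = \inn{p, 1-p} = \inn{p,1} - \inn{p,p}$, and $p^2 = p$ yields $\inn{p,p} = \inn{p^2, 1} = \inn{p,1}$, so $\inn{p, p^\perp}=0$. Combined with $\inn{a, p^\perp} \geq 0$ (again by self-duality), we get $\inn{a, p^\perp} = 0$. Applying the already-proved biconditional to the idempotent $p^\perp$ gives $Q_{p^\perp} a = 0$, and then point 4 of Proposition~\ref{prop:quadraticrep} yields $p^\perp * a = 0$, i.e. $p*a = a$. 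The main obstacle, such as it is, lies in this middle step --- specifically in noticing that $p^\perp$ is itself an idempotent so that the first biconditional applies, and in handling the two inequalities on $\inn{a,p^\perp}$ to pin it to zero.
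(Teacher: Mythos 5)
Your proposal is correct and follows essentially the same route as the paper's proof: the fundamental equality for idempotency of $Q_p$, the identity $\inn{Q_pa,1}=\inn{a,p}$ plus positivity of $Q_pa$ for the zero-characterisation, and the cycle through $a\leq p$ and $p*a=a$ via $Q_{p^\perp}a=0$ and point 4 of Proposition~\ref{prop:quadraticrep}. The only (harmless) variations are micro-level: the paper kills a positive element $x$ with $\inn{x,1}=0$ by writing $x=c^2$ and using $\inn{c*c,1}=\inn{c,c}$, where you instead use the order-unit bound and self-duality, and you compute $\inn{p,p^\perp}=0$ slightly differently.
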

\begin{proof}
The fundamental equality (number 5 of the previous proposition)
implies that when $p$ is an idempotent: $Q_{Q_p 1}=Q_{p^2}=Q_p =
Q_pQ_1Q_p=Q_pQ_p$, so that $Q_p$ is a positive idempotent operator,
symmetric with respect to the inner product. When $Q_pa=0$ we have
$0 = \inn{Q_pa,1}=\inn{a,Q_p1} = \inn{a,p}$.
Conversely, if~$\inn{a,p} = 0$,
    then~$\inn{Q_p a, 1} = 0$.
As~$Q_p a$ is positive,
    we must have~$c^2 = Q_pa$ for some~$c$
    and so~$0 = \inn{c,c}$, hence~$c=0$ and~$Q_p a= 0$.
    Now suppose $Q_pa=a$,
then because we have $a\leq 1$ we also have by positivity of $Q_p$,
$a=Q_pa\leq Q_p1=p$. When $a\leq p$ then by definition there is a
$r\geq 0$ such that $a+r=p$. Now~$0=\inn{0,1}=\inn{p*p^\perp,1} =
\inn{p,p^\perp} = \inn{a+r,p^\perp} = \inn{a,p^\perp} + \inn{r,p^\perp}$.
By self-duality (see theorem \ref{theor:chu}) each of these terms
is positive so that we must have $0=\inn{a,p^\perp}=\inn{a,Q_{p^\perp}1}
= \inn{Q_{p^\perp}a,1}$. Since $Q_{p^\perp}a\geq 0$ this can only
be the case when $Q_{p^\perp}a = 0$ so that $p^\perp*a=0$ from which
we get $p*a = a$.
To complete the proof, assume~$p * a= a$.
We will show~$Q_p a = a$.
This follow readily from the definition and assumption: $
    Q_p a = 2 p * (p * a) - (p * p ) * a
    = 2 a - a = a $.
\end{proof}

Like in quantum theory, we have a spectral theorem for elements of
a Euclidean Jordan algebra.
\begin{proposition}
    Let $a$ be an element of an EJA. Then there exists a number~$n$,
    real numbers~$\lambda_1, \ldots, \lambda_n$
    and orthogonal atomic
    idempotents~$p_1, \ldots, p_n$
    such that $a = \sum_{i=1}^n \lambda_i p_i$.
\end{proposition}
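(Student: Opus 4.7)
The plan is to pass to the closed subalgebra~$A \subseteq E$ generated by~$a$ and the unit~$1$, obtain a spectral decomposition there, and then refine the resulting idempotents into atoms of~$E$.

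First, the Jordan identity implies that the subalgebra generated by a single element of~$E$ is associative and commutative (essentially the content of MacDonald's theorem already used in proposition~\ref{prop:quadraticrep}). Taking the closure (the order unit and Hilbert norms agree by theorem~\ref{theor:chu}) yields a unital associative commutative EJA~$A$. Applying the JNW classification to~$A$ (corollary~\ref{appendixjnw})---the only associative irreducible EJAs being copies of~$\R$---we find $A \cong \R^n$ for some~$n$. Under this isomorphism the generator~$a$ corresponds to a tuple~$(\lambda_1,\ldots,\lambda_n)$, and the coordinate idempotents of~$\R^n$ yield pairwise orthogonal idempotents~$e_1,\ldots,e_n \in A$ with~$a = \sum_{i=1}^n \lambda_i e_i$.

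These~$e_i$ need not be atomic in the ambient~$E$, so we refine them. Applying the JNW classification now to~$E$ itself, each~$e_i$ splits along the factor decomposition into idempotents each lying in a single factor (a matrix factor~$M_m(F)^\sa$, a spin factor, or the exceptional~$M_3(\mathbb{O})^\sa$). In each such factor every non-zero idempotent is a sum of finitely many orthogonal atomic idempotents (rank-one projections in the matrix case; idempotents of the form $\tfrac{1}{2}(1 \pm v)$ with~$\norm{v}=1$ and~$\inn{v,1}=0$ in the spin-factor case; analogously in the exceptional case). Collecting the atoms thereby obtained gives a refinement~$e_i = \sum_{j} p_{i,j}$ with the~$p_{i,j}$ atomic in~$E$, whence~$a = \sum_{i,j} \lambda_i p_{i,j}$ as required.

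The main obstacle is the refinement step, which terminates only because each factor appearing in the JNW classification of~$E$ has finite rank (rank~$m$ for~$M_m(F)^\sa$, rank~$2$ for a spin factor, rank~$3$ for the exceptional factor). When~$a$ has infinite spectrum, $A$ itself is infinite-dimensional and the first step must be replaced by an integral spectral resolution; the proposition as stated is thus naturally understood for elements with finite spectral resolution, in particular throughout the finite-dimensional setting.
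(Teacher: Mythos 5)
Your overall strategy (pass to the closed associative subalgebra generated by~$a$, decompose there, then refine to atoms) is the same as the paper's, but the execution has two genuine problems. First, you invoke the Jordan--von Neumann--Wigner classification (Corollary~\ref{appendixjnw}) both to identify the singly generated subalgebra with~$\R^n$ and to refine the resulting idempotents into atoms. In the paper's development that classification is a \emph{downstream consequence} of this very proposition (spectral decomposition $\to$ self-duality $\to$ JB/JBW structure $\to$ type~I, finite rank $\to$ classification), so the argument as written is circular; and even independently of the paper's ordering, classifying infinite-dimensional JBW-factors presupposes exactly the spectral machinery you are trying to establish. The paper instead proves directly that any associative EJA is isomorphic to~$\R^n$ (Proposition~\ref{prop:associsdisc}): it embeds in~$C(X)$ by Kadison's representation theorem, and the inner product---each point evaluation~$\delta_x$ being represented by a vector~$\hat x$ satisfying~$\hat{x}g = \inn{\hat{x},g}\hat{x}$---forces~$X$ to be discrete and hence finite. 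Atomicity is then obtained separately (Proposition~\ref{prop:atomicsum}) by an iterated splitting of idempotents whose termination again rests on Proposition~\ref{prop:associsdisc}, since an infinite orthogonal family of idempotents would span an infinite-dimensional associative subalgebra.

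Second, and more seriously, your closing paragraph concedes that when~$a$ has infinite spectrum the construction breaks down and the proposition should be ``naturally understood'' only for elements with finite spectral resolution. But the proposition is asserted for arbitrary elements of arbitrary, possibly infinite-dimensional, EJAs, and the paper proves it in that generality: the key point you are missing is that infinite spectrum simply cannot occur, because the compatibility of the Jordan product with the complete inner product forces every singly generated closed subalgebra to be finite-dimensional. By stopping short of this, you have only established the finite-spectrum (in particular finite-dimensional) case, which is not the statement being claimed.
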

\begin{proof}
    Proven in the appendix under Corollary~\ref{cor:spectral}.
\end{proof}

Again like in quantum theory, we can for each element consider its `range'
where it acts non-trivially. We will denote this by a `ceiling': $\ceil{a}$.
The ceiling will play an important role later when we want to restrict an
EJA to certain subspaces.

\begin{proposition}\label{prop:EJAceilfloor}
    For an effect~$a \in E$ of an EJA~$E$ (i.e.~$0 \leq a \leq 1$),
    we can find~$\lambda_i>0$ and orthogonal atomic idempotents~$p_i$
    with~$a=\sum_i\lambda_i p_i$.
With such a decomposition,
    we define~$\ceil{a}=\sum_i p_i$.
This is the least idempotent above~$a$
        (and thus independent of choice of decomposition).
We denote the de Morgan dual by~$\floor{a}=\ceil{a^\perp}^\perp$.
    This is the greatest idempotent below $a$.
\end{proposition}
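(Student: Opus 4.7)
The plan is to verify directly that $q := \sum_i p_i$ is the least idempotent above $a$, using the two preceding propositions; independence of the decomposition is then automatic, and the floor statement falls out of de~Morgan duality.

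First I would check that $q$ is an idempotent and that $a \leq q$. Orthogonality of the $p_i$ gives $p_i * p_j = 0$ for $i \neq j$, so $q * q = \sum_{i,j} p_i * p_j = \sum_i p_i = q$. To see $a \leq q$, observe that $q - p_i = \sum_{j\neq i} p_j$ is a sum of positive elements and hence positive, so $p_i \leq q$; Proposition~\ref{prop:EJAidempotent} then gives $Q_q p_i = p_i$, whence $Q_q a = \sum_i \lambda_i Q_q p_i = a$, and a second application of that proposition yields $a \leq q$.

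The core of the argument is showing that any idempotent $r$ with $a \leq r$ satisfies $q \leq r$. My approach is to work with $Q_{r^\perp}$: since $r * r^\perp = 0$, Proposition~\ref{prop:quadraticrep}(4) gives $Q_{r^\perp} r = 0$, and positivity of $Q_{r^\perp}$ together with $a \leq r$ then forces $0 \leq Q_{r^\perp} a \leq Q_{r^\perp} r = 0$. Expanding, $0 = \sum_i \lambda_i Q_{r^\perp} p_i$ is a sum of positive elements (each $\lambda_i > 0$ and $Q_{r^\perp} p_i \geq 0$), so each summand must vanish. Proposition~\ref{prop:quadraticrep}(4) now yields $r^\perp * p_i = 0$, i.e.\ $r * p_i = p_i$; by Proposition~\ref{prop:EJAidempotent} this means $p_i \leq r$ and $Q_r p_i = p_i$, and summing gives $Q_r q = q$, so $q \leq r$. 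Uniqueness of least upper bounds now immediately delivers independence of the chosen decomposition.

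For the floor statement, $\floor{a} = \ceil{a^\perp}^\perp$ is automatically an idempotent (complements of idempotents are idempotents), and for any idempotent $r$ one has $r \leq a$ iff $a^\perp \leq r^\perp$ iff $\ceil{a^\perp} \leq r^\perp$ (applying the first part to the idempotent $r^\perp$, which lies above $a^\perp$) iff $r \leq \floor{a}$. The main technical point to watch will be the step $Q_{r^\perp} a = 0 \Rightarrow Q_{r^\perp} p_i = 0$ for each $i$; this rests on the positive cone being proper, so that a sum of positive elements equal to zero forces each summand to vanish, combined with the hypothesis $\lambda_i > 0$.
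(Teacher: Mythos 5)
Your proof is correct, and its overall shape matches the paper's: establish that $q=\sum_i p_i$ is an idempotent above $a$, then show minimality using Propositions~\ref{prop:quadraticrep} and~\ref{prop:EJAidempotent}. The one step where you genuinely diverge is the minimality argument. The paper notes that $\lambda_i p_i \leq a \leq r$ (since $a-\lambda_i p_i=\sum_{j\neq i}\lambda_j p_j\geq 0$), applies Proposition~\ref{prop:EJAidempotent} directly to get $r*(\lambda_i p_i)=\lambda_i p_i$, and divides by $\lambda_i>0$; you instead squeeze $0\leq Q_{r^\perp}a\leq Q_{r^\perp}r=0$, invoke properness of the positive cone to kill each summand $\lambda_i Q_{r^\perp}p_i$, and then use Proposition~\ref{prop:quadraticrep}(4) to recover $r*p_i=p_i$. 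Both are sound; the paper's route is slightly shorter and avoids needing cone properness at this point (though the paper uses exactly your ``sum of positives equals zero'' argument later, in Lemma~\ref{lem:EJAfloor1}), while yours has the small advantage of not needing to first isolate $\lambda_i p_i\leq a$. You also spell out two things the paper leaves implicit: the verification that $\ceil{a}$ is an idempotent with $a\leq\ceil{a}$, and the de~Morgan argument that $\floor{a}$ is the greatest idempotent below $a$ — both of these are worth having on record, and your arguments for them are correct.
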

\begin{proof}
Simply leaving out those terms where~$\lambda_i=0$,
    we can find~$\lambda_i\neq 0$ and orthogonal atomic idempotent~$p_i$
    with~$a = \sum_i \lambda_i p_i$.
As the inner product of positive elements is positive,
    we find~$0 \leq \inn{a , p_i} = \lambda_i \inn{p_i,p_i}$.
    As~$\inn{p_i,p_i} > 0$ we must have~$\lambda_i > 0$, as promised.

Next, we prove that~$\ceil{a}$ is the least idempotent above~$a$.
	Let $q$ be idempotent such that $a\leq q$. Then also
	$\lambda_i p_i\leq q$. By proposition \ref{prop:EJAidempotent}
	we then have $q*(\lambda_i p_i) = \lambda_i p_i$ so that
	also $q*p_i=p_i$.
    Hence~$q * \ceil{a} = \sum_i q* p_i = \sum_i p_i = \ceil{a}$.
    Again by proposition \ref{prop:EJAidempotent}
    we conclude that $\ceil{a} \leq q$.
    Thus~$\ceil{a}$ is indeed the least idempotent above~$a$.
\end{proof}

\section{Filters and Corners}\label{sec:filtercorner}
With the preliminaries out of the way we will start to look at additional structure that is present in the category $\EJA_{psu}$. The proofs in this section are heavily inspired by \cite{westerbaan2016universal,bramthesis} where the existence of this structure was shown for the category of von Neumann algebras. As stated in the introduction, our notion of purity is based on \emph{filters} and \emph{corners}. In this section we will give their formal definition and establish their existence.

\begin{definition}
	Let $q\in E$ be an effect. A \emph{corner} for $q$ is a positive subunital linear map $\pi\colon E\rightarrow \{E\lvert q\}$ such that $\pi(1)=\pi(q)$ and which is \emph{initial} with this property: if $g\colon E\rightarrow F$ is another positive subunital linear map such that $g(1)=g(q)$ then there must exist a \emph{unique} $\cl{g}\colon\{E\lvert q\}\rightarrow F$ such that $\cl{g}\circ \pi = g$. In the form of a diagram:
    \[\begin{tikzcd}[ampersand replacement = \&]
    \{E\lvert q\} \arrow[dotted,swap]{d}{\cl{g}}\&\arrow[swap]{l}{\pi} E\arrow{dl}{g} \\
     F\&  \\
    \end{tikzcd}\]
\end{definition}
\begin{note}
The name of `corner' is inspired by the appearance of these maps
when considering matrix algebras, in which case they can be arranged to
    project onto
a corner of the matrix. When $g(1)=g(q)$ we of course have $g(q^\perp)=0$,
and hence everything orthogonal to $q$ is `thrown away' by this map. 
    Thus these maps project onto a subspace \emph{where $q$ holds}.
The universal property tells us that $\{E\lvert q\}$ is the
largest such subspace.
\end{note}

\begin{definition}
	Let $q\in E$ be an effect. A \emph{filter} for $q$ is a positive subunital linear map $\xi\colon E_q\rightarrow E$ such that $\xi(1)\leq q$ and that is \emph{final} with this property: if $f\colon F\rightarrow E$ is another positive subunital linear map such that $f(1)\leq q$ then there must exist a unique $\cl{f}\colon F\rightarrow E_q$ such that $\xi\circ \cl{f} = f$. In the form of a diagram:
    \[\begin{tikzcd}[ampersand replacement = \&]
    E_q \arrow{r}{\xi}\& E \\
    F \arrow[dotted]{u}{\overline{f}}\arrow{ru}[swap]{f}\&  \\
    \end{tikzcd}\] 
\end{definition}
\begin{note}
    The name `filter' comes from the function of these maps in quantum theory as describing the act of updating the action of effects based on previous measurement outcomes, i.e.\ filtering them. The universal property can be interpreted as stating that $E_q$ is the smallest subsystem of $E$ that can faithfully represent all effects below $q$.
\end{note}

Since both these types of maps satisfy a universal property, they are (for a given effect) unique up to isomorphism. In particular, given a corner $\pi\colon E\rightarrow \{E\lvert q\}$ and an isomorphism $\Theta\colon\{E\lvert q\}\rightarrow F$ the map $\Theta\circ \pi$ is again a corner (for $q$), and furthermore any corner for $q$ is of this form. Similarly when $\xi\colon E_q\rightarrow E$ is a filter, and we have an isomorphism $\Theta\colon F\rightarrow E_q$, the map $\xi\circ \Theta$ is also a filter, and any filter for $q$ is of this form. The objects $\{E\lvert q\}$ and $E_q$ are therefore also unique up to isomorphism.
In this section we will see that there is a canonical choice of corner and filter for every effect.

As promised, a pure map is defined to be a composition of a corner and a filter:
\begin{definition}\label{def:purity}
	We call a positive subunital linear map between EJAs~$f\colon E\rightarrow F$ \emph{pure} when
    there exists some corner~$\pi$ and some filter $\xi$ (not
    necessarily for the same effect) such that $f=\xi\circ \pi$.
\end{definition}
Note that at the moment it is not yet clear whether pure maps are closed under composition or whether there are any pure maps at all.  First, we will study corners a bit more, for which we need some preparation.
\begin{proposition}\label{prop:peircedecomp}
    \cite[Proposition 1.43]{alfsen2012geometry} (Peirce-decomposition)
    Let~$E$ be an EJA with an idempotent~$p \in E$.
    Then $E_1(p):=Q_p(E):=\{Q_p(a)~;~a\in E\}$ is a sub-EJA of $E$ consisting precisely of those elements of $E$ for which $Q_p(a)=a$.
\end{proposition}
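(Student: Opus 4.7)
The statement has two parts: the set-theoretic description $Q_p(E) = \{a \in E : Q_p a = a\}$, and the fact that this set is a sub-EJA. I would treat them in turn.

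The first part is straightforward once one observes that $Q_p$ is idempotent as a linear operator. Indeed, since $p*p = p$ and $Q_p 1 = p^2 = p$ (Proposition~\ref{prop:quadraticrep}, items 1 and 2), the fundamental equality gives $Q_p = Q_{Q_p 1} = Q_p Q_1 Q_p = Q_p^2$. So $Q_p(E)$ coincides with its fixed-point set.

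The substantive second part is best approached by identifying $E_1(p)$ with the $1$-eigenspace of the left-multiplication operator $L_p$. A Peirce-style argument shows that $L_p$ has spectrum contained in $\{0, \tfrac{1}{2}, 1\}$ whenever $p$ is idempotent. Since $Q_p = 2L_p^2 - L_{p^2} = 2L_p^2 - L_p$, it acts as the scalar $2\lambda^2 - \lambda$ on each $\lambda$-eigenspace, which is $0$ for $\lambda \in \{0, \tfrac{1}{2}\}$ and $1$ for $\lambda = 1$. Thus $Q_p$ is the (orthogonal) projection onto $\ker(L_p - I) = \{a : p*a = a\}$, so the fixed-point set of $Q_p$ equals this space. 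Closure of $E_1(p)$ under the Jordan product---that is, $p*a = a$ and $p*b = b$ implying $p*(a*b) = a*b$---is the heart of the Peirce decomposition, provable by suitably polarizing the Jordan identity $(a*b)*(a*a) = a*(b*(a*a))$ and exploiting $p^2 = p$.

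With closure under $*$ in hand, the remaining sub-EJA axioms come for free. $E_1(p)$ is a closed linear subspace of $E$ (being the fixed-point set of a continuous operator), hence inherits the Hilbert space structure. The element $p$ serves as the multiplicative unit since $p*a = a$ on $E_1(p)$ by construction. Finally, the Jordan identity and the self-adjointness relation $\langle a*b, c\rangle = \langle b, a*c\rangle$ are inherited from $E$ by restriction. The main obstacle is the Peirce multiplication rule $E_1(p) * E_1(p) \subseteq E_1(p)$: because $*$ is not associative, one has to work through the linearized Jordan identities carefully rather than directly computing, and this is where I expect the technical work to lie.
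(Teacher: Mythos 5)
The paper does not prove this proposition at all---it is quoted directly from Alfsen and Shultz---so there is no in-text argument to compare yours against; your outline is the standard Peirce-decomposition proof that the cited source gives. Your first step, that $Q_p^2=Q_p$ via the fundamental equality and hence that $Q_p(E)$ equals the fixed-point set of $Q_p$, is correct and complete. The problem is that the two claims carrying the rest of the proposition---that the spectrum of $L_p$ lies in $\{0,\tfrac12,1\}$, and that $E_1(p)$ is closed under $*$---are asserted rather than proven, and you explicitly defer the latter (``this is where I expect the technical work to lie''). Since everything else is routine bookkeeping, those two claims \emph{are} the proof: as written, your proposal establishes only that $Q_p(E)$ is the fixed-point set, not that it is a subalgebra.

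Both gaps do close, and in a few lines, using identities the paper already records in proposition~\ref{jaeqs} of the appendix, so your route is viable. Setting $a=b=c=p$ in item~\ref{jaeqs-2} of proposition~\ref{jaeqs} gives $L_p = 3L_p^2 - 2L_p^3$, i.e.\ $L_p(2L_p-1)(L_p-1)=0$; since $L_p$ is self-adjoint for the inner product and is annihilated by a polynomial with simple real roots $0,\tfrac12,1$, the spectral theorem yields the orthogonal eigenspace decomposition $E=E_0\oplus E_{1/2}\oplus E_1$ even in the infinite-dimensional case---a point worth flagging explicitly, since the paper's EJAs need not be finite dimensional and ``diagonalizability'' is not automatic there. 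For closure under the product, put $a=p$ in the identity $[L_b,L_{a^2}]=2[L_{a*b},L_a]$ from item~\ref{jaeqs-1}: for $b\in E_1(p)$ this reads $[L_b,L_p]=2[L_b,L_p]$, so $L_b$ commutes with $L_p$, and then for $a,b\in E_1(p)$ one computes $p*(a*b)=L_pL_ab=L_aL_pb=L_ab=a*b$. With that in hand, $E_1(p)$ is a norm-closed subspace (the kernel of the bounded operator $\id-Q_p$), closed under $*$, with unit $p$, and it inherits the Jordan identity and the compatibility $\inn{a*b,c}=\inn{b,a*c}$ by restriction, exactly as you say.
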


\begin{definition}
	Let $E$ be an EJA with an effect~$q \in E$.
    Then~$\{E\lvert q\} := E_1(\floor{q}) = \{E\lvert \floor{q}\}$
	and $E_q := E_1(\floor{q^\perp}^\perp) = E_1(\ceil{q})$.
\end{definition}
For an idempotent~$p$, we have $\{E\lvert p\} = E_p$.
After a few brief lemmas, we will show that~$\{E\lvert q\}$ and $E_q$
    are (the objects for) a corner and a filter respectively.

\begin{lemma}
	Let~$\omega\colon E\rightarrow \R$ be any positive linear map such that $\omega(p)=\omega(1)$ for some idempotent $p$. Then~$\omega(Q_pa)=\omega(a)$ for all $a$.
\end{lemma}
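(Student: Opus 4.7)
The plan is to derive the identity from a Cauchy--Schwarz inequality applied to the symmetric bilinear form $B(a,b) := \omega(a*b)$ on $E$. Since $a^2 = a*a$ is positive by definition of positivity and $\omega$ is a positive functional, $B(a,a) = \omega(a^2) \ge 0$, so $B$ is positive semi-definite and the usual Cauchy--Schwarz inequality applies.

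The first step will be to observe that the hypothesis $\omega(p) = \omega(1)$ gives $\omega(p^\perp) = 0$, and because $p^\perp$ is itself an idempotent this equals $B(p^\perp, p^\perp) = \omega((p^\perp)^2) = 0$. Cauchy--Schwarz then yields, for every $c \in E$,
\[|\omega(p^\perp * c)|^2 \;=\; |B(p^\perp, c)|^2 \;\le\; B(p^\perp, p^\perp)\, B(c,c) \;=\; 0,\]
so $\omega(p^\perp * c) = 0$ for every $c \in E$.

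The second step will be to establish the Jordan-algebraic identity
\[a - Q_p\, a \;=\; p^\perp * (a + 2\, p*a),\]
which follows by expanding $Q_p = 2 L_p^2 - L_p$ (using $p^2 = p$) and $p^\perp = 1 - p$ on the right-hand side. Applying $\omega$ and invoking the first step immediately gives $\omega(a) - \omega(Q_p a) = 0$, as desired.

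The only mildly delicate point is verifying the identity in the second step, but since both sides are linear in $a$ it is a short direct manipulation using only $p^2 = p$ and $p * p^\perp = 0$; in particular it requires no appeal to the Peirce decomposition or to the deeper identities from proposition~\ref{prop:quadraticrep}.
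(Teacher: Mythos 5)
Your proposal is correct and follows essentially the same route as the paper: both apply Cauchy--Schwarz to the positive semi-definite form $\omega(a*b)$ to conclude $\omega(p^\perp * c)=0$ for all $c$, and then unfold $Q_p = 2L_p^2 - L_{p^2}$ with $p^2=p$. The only cosmetic difference is that you package the final step as the single identity $a - Q_p a = p^\perp*(a+2\,p*a)$ (which checks out), whereas the paper applies the vanishing of $\omega(p^\perp*\,\cdot\,)$ twice in succession to get $\omega(p*a)=\omega(a)$ and then $\omega(p*(p*a))=\omega(a)$.
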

\begin{proof}
	Given such a map $\omega$ we can define $\inn{a,b}_\omega
	:= \omega(a*b)$ which is a bilinear positive semi-definite
	form. It then satisfies the Cauchy--Schwarz inequality:
	$\lvert \inn{a,b}_\omega\rvert^2 \leq \inn{a,a}_\omega
	\inn{b,b}_\omega$. Since $\omega(p)=\omega(1)$ we also have
	$\omega(p^\perp)=0$. But then $\lvert \omega(p^\perp*a)\rvert^2
	\leq \omega(p^\perp*p^\perp)\omega(a*a) = 0$ so that
	$\omega(p^\perp*a) = 0$. Then obviously $\omega(p*a) =
	\omega(a)$ from which we also get $\omega(p*(p*a))=\omega(a)$.
	Unfolding the definition of $Q_p$ we then get
	$\omega(Q_pa)=\omega(a)$.
\end{proof}
\begin{corollary}\label{cor:factorimage}
	Let~$g\colon E\rightarrow W$ be a positive linear map between EJAs
    such that $g(p)=g(1)$ for some idempotent $p$. Then $g(Q_pa)= g(a)$ for all $a$.
\end{corollary}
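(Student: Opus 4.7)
The plan is to lift the preceding lemma (which is the functional case $W = \R$) to general codomain $W$ by dualizing through the inner product. The natural separating family of positive functionals on an EJA comes from self-duality (theorem~\ref{theor:chu}, point~4): for any $w \in W$, if $\inn{w,v} = 0$ for every positive $v \in W$, then both $w \geq 0$ and $-w \geq 0$, hence $w = 0$. So it suffices to show that $\inn{g(Q_p a) - g(a), v} = 0$ for every $v \geq 0$ in $W$.

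Fix such a positive $v \in W$ and define $\omega_v \colon E \to \R$ by $\omega_v(x) := \inn{g(x), v}$. This is linear, and it is positive because $g$ is positive and the inner product of two positive elements of a EJA is nonnegative (again by self-duality). Moreover, the hypothesis $g(p) = g(1)$ immediately gives $\omega_v(p) = \inn{g(p), v} = \inn{g(1), v} = \omega_v(1)$. Thus $\omega_v$ satisfies the hypothesis of the previous lemma, and therefore $\omega_v(Q_p a) = \omega_v(a)$, i.e.\ $\inn{g(Q_p a), v} = \inn{g(a), v}$.

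Since $v$ was an arbitrary positive element of $W$, self-duality forces $g(Q_p a) - g(a) = 0$, which is the claim. There is no real obstacle here: the only subtlety is noticing that positivity of $g$ combined with positivity of inner products of positive elements makes each $\omega_v$ into an honest positive linear functional, so that the lemma is applicable. The argument is essentially a standard ``enough positive states separate points'' dualization, made available in this setting by the self-duality clause of theorem~\ref{theor:chu}.
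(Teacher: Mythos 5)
Your argument is correct and is exactly the paper's intended proof: the paper's one-line justification ``the states separate the maps'' is precisely your dualization through positive functionals $\omega_v = \inn{g(\cdot),v}$ for positive $v$, with self-duality supplying both the positivity of each $\omega_v$ and the separation step at the end. Nothing is missing.
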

\begin{proof}
	Follows by the previous lemma because the states separate the maps.
\end{proof}

\begin{lemma}\label{lem:EJAfloor1}
	Let $g\colon E\rightarrow F$ be any positive linear map between EJAs
    such that $g(q)=g(1)$ for some effect $q$.
    Then~$g(\floor{q})=g(1)$.
\end{lemma}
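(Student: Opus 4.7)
The plan is to prove the equivalent dual statement $g(\ceil{q^\perp}) = 0$, from which $g(\floor{q}) = g(1)$ follows by taking orthocomplements (recall $\floor{q} = 1 - \ceil{q^\perp}$ by definition).

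First I would use linearity and the hypothesis $g(q)=g(1)$ to deduce $g(q^\perp) = 0$. Since $q^\perp$ is itself an effect, proposition~\ref{prop:EJAceilfloor} supplies a decomposition $q^\perp = \sum_{i} \lambda_i p_i$ with $\lambda_i > 0$ and orthogonal atomic idempotents $p_i$, in terms of which $\ceil{q^\perp} = \sum_i p_i$. Applying $g$ to the spectral decomposition gives the identity $0 = g(q^\perp) = \sum_i \lambda_i g(p_i)$ as a sum of positive elements of $F$.

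The key step is then to argue that each summand individually vanishes. This is where the positivity (properness) of the cone in $F$ comes in: if $y_1+\dots+y_n = 0$ with each $y_i \ge 0$, then $y_1 = -(y_2 + \dots + y_n) \le 0$ (by theorem~\ref{theor:chu}, sums of positive elements are positive), and combining with $y_1 \ge 0$ and the strong Archimedean property forces $y_1 = 0$; iterating gives $y_i = 0$ for all $i$. Alternatively one can pair against $1$ and use self-duality together with the Cauchy--Schwarz trick already employed in the proof of proposition~\ref{prop:EJAidempotent}. Either way, $\lambda_i g(p_i) = 0$, and since $\lambda_i > 0$ we get $g(p_i) = 0$ for each $i$.

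Finally, summing gives $g(\ceil{q^\perp}) = \sum_i g(p_i) = 0$, whence $g(\floor{q}) = g(1) - g(\ceil{q^\perp}) = g(1)$. I don't anticipate a real obstacle here; the only mild care needed is in justifying the cancellation argument in step three, but this is standard once the cone-theoretic results of theorem~\ref{theor:chu} are in hand. The proof is essentially a direct calculation riding on the spectral decomposition of $q^\perp$ together with properness of the positive cone.
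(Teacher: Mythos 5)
Your proposal is correct and follows essentially the same route as the paper: deduce $g(q^\perp)=0$, expand $q^\perp=\sum_i\lambda_i p_i$ with $\lambda_i>0$ via proposition~\ref{prop:EJAceilfloor}, use positivity of $g$ to conclude each $g(p_i)=0$, and sum to get $g(\ceil{q^\perp})=0$. The paper leaves the cancellation step ("a sum of positive elements is zero only if each is zero") implicit, whereas you justify it; either the properness-of-the-cone argument or the pairing-against-$1$/self-duality argument you sketch suffices.
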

\begin{proof}
	$g(q)=g(1)$ means that $g(q^\perp) = 0$. Write $q^\perp = \sum_i \lambda_i p_i$ where $\lambda_i>0$, then $0=g(q^\perp) = \sum_i \lambda_i g(p_i)$. Since $g$ is a positive map and $\lambda_i>0$ and $p_i\geq 0$ this implies that $g(p_i)=0$. But since $\ceil{q^\perp} = \sum_i p_i$ by proposition \ref{prop:EJAceilfloor}, we see $g(\ceil{q^\perp})=0$, so that $g(\floor{q})= g(\ceil{q^\perp}^\perp) = g(1)$.
\end{proof}

\begin{proposition}\label{eja:quot}
	Let $q$ be an effect of an EJA~$E$. Define $\pi_q\colon E\rightarrow \{E\lvert
	q\} = E_1(\floor{q})$ to be $\pi_q = r\circ Q_{\floor{q}}$
	where $r: E\rightarrow E_1(\floor{q})$ is the orthogonal
	projection map with respect to the Hilbert space structure.
    Then~$\pi_q$ is a corner for $q$. We will refer to this map
	as the \emph{standard corner} for $q$.
\end{proposition}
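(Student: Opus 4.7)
The plan is to verify, in turn, that $\pi_q$ (i) is a positive, subunital, linear map, (ii) satisfies $\pi_q(1) = \pi_q(q)$, and (iii) has the required universal property. Since the Peirce-projection $Q_{\floor{q}}$ already takes values in $E_1(\floor{q})$ (Proposition~\ref{prop:peircedecomp}), the role of $r$ is just to reinterpret the codomain; linearity is then automatic, positivity follows from point~7 of Proposition~\ref{prop:quadraticrep}, and subunitality from $\pi_q(1) = Q_{\floor{q}}(1) = \floor{q}^2 = \floor{q}$, which is the unit of the sub-EJA~$E_1(\floor{q})$.

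For (ii) I would compute $\pi_q(q)$ by a sandwich argument. Because $\floor{q}$ is the greatest idempotent below~$q$, we have $\floor{q} \leq q \leq 1$; applying the monotone map $Q_{\floor{q}}$ gives
\[
\floor{q} \;=\; Q_{\floor{q}}(\floor{q}) \;\leq\; Q_{\floor{q}}(q) \;\leq\; Q_{\floor{q}}(1) \;=\; \floor{q},
\]
where the leftmost equality uses Proposition~\ref{prop:EJAidempotent} applied with $a=p=\floor{q}$. Hence $\pi_q(q)=\floor{q}=\pi_q(1)$.

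For (iii), suppose $g\colon E\to F$ is positive subunital with $g(1)=g(q)$. Lemma~\ref{lem:EJAfloor1} upgrades this to $g(\floor{q})=g(1)$, and then Corollary~\ref{cor:factorimage} yields $g\circ Q_{\floor{q}} = g$. I would define $\overline g\colon E_1(\floor{q})\to F$ to be the restriction of $g$ to the sub-EJA $E_1(\floor{q})$. The factorisation $\overline g\circ\pi_q = g$ is then exactly Corollary~\ref{cor:factorimage}, and uniqueness is immediate: every $b\in E_1(\floor{q})$ satisfies $b = Q_{\floor{q}}(b) = \pi_q(b)$, so $\pi_q$ is surjective and $\overline g$ is pinned down on its image. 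Positivity of $\overline g$ is inherited from $g$, since a positive element of the sub-EJA is a square $b*b$ with $b\in E_1(\floor{q})\subseteq E$, hence positive in $E$. Subunitality follows from $\overline g(\floor{q}) = g(\floor{q}) = g(1) \leq 1$, noting $\floor{q}$ is the unit of $E_1(\floor{q})$.

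The one step I expect to require genuine care is the sandwich identity $Q_{\floor{q}}(q)=\floor{q}$, which is what forces us to pass through $\floor{q}$ rather than~$q$ itself; everything else follows by assembling the previously developed machinery (Propositions~\ref{prop:peircedecomp} and~\ref{prop:EJAidempotent}, Lemma~\ref{lem:EJAfloor1}, and Corollary~\ref{cor:factorimage}), each of which was placed in the text precisely for this proof.
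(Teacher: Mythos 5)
Your proposal is correct and follows essentially the same route as the paper: establish $\pi_q(1)=\pi_q(q)$ via $Q_{\floor{q}}(q)=\floor{q}$, obtain $g(\floor{q})=g(1)$ from Lemma~\ref{lem:EJAfloor1}, factor through the restriction of $g$ using Corollary~\ref{cor:factorimage}, and get uniqueness from $\pi_q$ acting as the identity on $E_1(\floor{q})$. Your explicit sandwich argument $\floor{q}=Q_{\floor{q}}(\floor{q})\leq Q_{\floor{q}}(q)\leq Q_{\floor{q}}(1)=\floor{q}$ and the verification that $\overline{g}$ is positive and subunital are details the paper leaves implicit, but they do not change the argument.
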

\begin{proof}
	First of all we have $\pi_q(1) = (r \circ Q_{\floor{q}})(1) = r (\floor{q}) = r \circ Q_{\floor{q}}(q) = \pi_q(q)$. Now suppose $g\colon E\rightarrow F$ is a positive subunital linear map such that $g(q)=g(1)$. We must show that there is a unique $\cl{g}:\{E\lvert q\} \rightarrow F$ such that $\cl{g}\circ \pi_q= g$.

	By the previous lemma $g(\floor{q})=g(1)$. Define
	$\cl{g}:E_1(\floor{q})\rightarrow F$ as the restriction of
	$g$. To prove that $\cl{g}\circ \pi_q= g$, we need
	to show that~$g(a)=g(Q_{\floor{q}}a)$ for all $a$.
    This follows from corollary \ref{cor:factorimage}.
	For uniqueness suppose we have a $h\colon E_1(\floor{q})\rightarrow
	F$ such that $h\circ \pi_q = g = \cl{g}\circ \pi_q$. Let
	$a\in E_1(\floor{q})$, then we can see $a$ as an element
	of $E$ with $\pi_q(a)=a$, so that $h(a) = h(\pi_q(a)) =
	\cl{g}(\pi_q(a)) = \cl{g}(a)$, as desired.
\end{proof}

For a positive $q=\sum_i \lambda_i p_i$ we can define a positive
square root $\sqrt{q}:=\sum_i \sqrt{\lambda_i}p_i$. This is the
unique positive element such that $\sqrt{q}*\sqrt{q}=q$.

\begin{proposition}\label{eja:filter}
	Let $q$ be an effect of an EJA~$E$. Define~$\xi_q\colon E_q \rightarrow E$ to
	be the map $\xi_q := Q_{\sqrt{q}} \circ \iota$ where $\iota$
	is the inclusion $\iota\colon E_q = E_1(\ceil{q})\rightarrow E$,
	then $\xi_q$ is a filter for $q$. We will refer to this map
	as the \emph{standard filter} for $q$.
\end{proposition}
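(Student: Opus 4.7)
The plan is to verify that $\xi_q$ satisfies the two defining properties of a filter for $q$: positivity and subunitality with $\xi_q(1_{E_q}) \leq q$, together with the universal factorisation. Positivity of $\xi_q$ is immediate from positivity of $Q_{\sqrt{q}}$ (Proposition~\ref{prop:quadraticrep}) and of $\iota$. Since $E_q = E_1(\ceil{q})$ has unit $\ceil{q}$, the value $\xi_q(1_{E_q}) = Q_{\sqrt{q}}(\ceil{q})$ can be computed directly: both $\sqrt{q}$ and $q$ are effects bounded above by $\ceil{q}$, so Proposition~\ref{prop:EJAidempotent} yields $\sqrt{q} * \ceil{q} = \sqrt{q}$ and $q * \ceil{q} = q$, and unfolding gives $\xi_q(1_{E_q}) = 2\sqrt{q}*(\sqrt{q}*\ceil{q}) - q*\ceil{q} = q$, hence in particular $\xi_q(1_{E_q}) \leq q \leq 1_E$.

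For the universal property, suppose $f\colon F \to E$ is a positive subunital linear map with $f(1_F) \leq q$. I would first show $\im(f) \subseteq E_q$, so that $f$ genuinely factors through the inclusion $\iota$: for any effect $a \in F$ one has $f(a) \leq f(1_F) \leq q \leq \ceil{q}$, so Proposition~\ref{prop:EJAidempotent} gives $Q_{\ceil{q}}(f(a)) = f(a)$, whence $f(a) \in E_1(\ceil{q}) = E_q$, and the containment extends to all of $F$ by linearity (every element of an EJA is a real combination of effects via the spectral theorem). Next I would exploit that $\sqrt{q}$ is invertible inside $E_q$, since $\ceil{\sqrt{q}} = \ceil{q} = 1_{E_q}$; writing $(\sqrt{q})^{-1}$ for its inverse in $E_q$, Proposition~\ref{prop:quadraticrep} shows that $Q_{\sqrt{q}}$ computed within $E_q$ is an order automorphism with inverse $Q_{(\sqrt{q})^{-1}}$. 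Setting $\cl{f} := Q_{(\sqrt{q})^{-1}} \circ f \colon F \to E_q$, the map is positive as a composition of positive maps, satisfies $\xi_q \circ \cl{f} = f$ by construction, and is subunital since
\[
\cl{f}(1_F) \;=\; Q_{(\sqrt{q})^{-1}}(f(1_F)) \;\leq\; Q_{(\sqrt{q})^{-1}}(q) \;=\; Q_{(\sqrt{q})^{-1}}(Q_{\sqrt{q}}(1_{E_q})) \;=\; 1_{E_q}.
\]
Uniqueness of $\cl{f}$ is then immediate from injectivity of $Q_{\sqrt{q}}$ on $E_q$.

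The main technical hurdle I anticipate is at the interface between $E$ and the sub-EJA $E_q$: one must verify that $Q_{\sqrt{q}}$ restricted to $E_q$ coincides with the quadratic representation of $\sqrt{q}$ computed inside $E_q$ (so that invertibility transfers along the inclusion), and that positivity and order inequalities pass faithfully through $\iota$ in both directions. These verifications are not deep, but require careful bookkeeping using the Peirce decomposition (Proposition~\ref{prop:peircedecomp}) and the identities collected in Proposition~\ref{prop:quadraticrep}, most crucially the fundamental equality.
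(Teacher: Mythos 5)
Your proposal is correct and follows essentially the same route as the paper's proof: restrict the codomain of $f$ to $E_q=E_1(\ceil{q})$ via $f(a)\leq q\leq\ceil{q}$ and Proposition~\ref{prop:EJAidempotent}, define $\cl{f}$ by composing with the quadratic representation of the (pseudo-)inverse of $\sqrt{q}$ (your inverse of $\sqrt{q}$ inside $E_q$ is exactly the paper's $\sqrt{q^{-1}}=\sum_i\lambda_i^{-1/2}p_i$), and obtain uniqueness from invertibility of $Q_{\sqrt{q}}$ on $E_q$. The only cosmetic difference is that the paper works with $Q$-maps on the ambient algebra and the identity $Q_{\sqrt{q}}Q_{\sqrt{q^{-1}}}=Q_{\ceil{q}}$, whereas you invoke invertibility intrinsically in $E_q$ and correctly flag the (routine) compatibility check between the two as the remaining bookkeeping.
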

\begin{proof}
	Clearly~$\xi_q(1)=q$. We need to show that this map is final
    with respect to this property.
    To this end, assume~$f\colon F\rightarrow E$ is any positive subunital linear map with~$f(1)\leq q$.
    We have to show that there is a unique~$\cl{f}:F\rightarrow E_q$ such that $\xi_q\circ
	\cl{f}=f$.

	Clearly $f(1)\leq q\leq \ceil{q}$.
    Thus for all $0\leq p\leq 1$ we have $f(p)\leq \ceil{q}$ so
    that $f(p)\in E_1(\ceil{q})$ which means that we can restrict
    the codomain of $f$ to $E_1(\ceil{q})=E_q$. Writing $q$ as
    $q=\sum_i \lambda_i p_i$ for some $\lambda_i>0$ and orthogonal atomic projections~$p_i$, we see it has
    a pseudo-inverse $q^{-1}:=\sum_i \lambda_i^{-1} p_i$ such that
    $Q_{\sqrt{q^{-1}}}q = q* q^{-1} = \ceil{q}$. In particular
    $Q_{\sqrt{q^{-1}}}f(p)\leq Q_{\sqrt{q^{-1}}}q = \ceil{q}$. It
    follows that the map $\cl{f}:W\rightarrow E_q$ given by
    $\cl{f}(a)=Q_{\sqrt{q^{-1}}}f(a)$ is subunital and obviously
    $(\xi_q\circ \cl{f})(a) = Q_{\sqrt{q}}Q_{\sqrt{q^{-1}}} f(a) =
    Q_{\ceil{q}}f(a) = f(a)$ by proposition \ref{prop:EJAidempotent}.

	Now for uniqueness, suppose that we have a $g:F\rightarrow E_q$ such that $\xi_q\circ g = f$. Then $Q_{\ceil{q}}\circ \iota \circ g = Q_{\sqrt{q^{-1}}} \circ Q_{\sqrt{q}} \circ \iota\circ g= Q_{\sqrt{q^{-1}}}\circ f$. As $Q_{\ceil{q}}$ acts as the identity on all elements coming from $E_1(\ceil{q})$ it can be removed from the expression. By taking the corestriction of both sides to $E_1(\ceil{q})$ we see that $g=Q_{\sqrt{q^{-1}}}\circ f = \cl{f}$, as desired.
\end{proof}

Let~$q$ be an effect in some EJA~$E$.
Note that the EJA associated to the
standard filter of $q$ is $E_q = E_1(\ceil{q})$ while the EJA of
the standard corner is $\{E\lvert q\} = E_1(\floor{q})$. 
Therefore, when $q$ is not an idempotent, we have $E_q\neq \{E\lvert q\}$and hence we cannot compose the standard filter and corner of $q$.
However, if one takes the the standard
corner of $\ceil{q}$ instead of $q$, then $E_q=\{E\lvert \ceil{q}\}$ and the filter and corner can indeed be composed.
It is easy to see that this composition~$\xi_q\circ \pi_{\ceil{q}}$ equals~$Q_{\sqrt{q}}$.
This shows that the~$Q_a$ maps are indeed pure (for positive~$a$).
Also note that the standard filter and the standard corner for the unit
    $1\in E$ are simply the identity and since a filter composed
with an isomorphism is still a filter we see that indeed all
isomorphisms are pure.
Next we will show that pure maps are closed under composition.

\subsection{The polar decomposition theorem}\label{sec:EJApurity}

The composition of two filters is again a filter,
    which can be shown in the general setting of an effectus~\cite[197IX]{basthesis}.
In our setting, it is easy to see that the composition of two corners is again a corner.
Suppose we know that a composition of a filter with a corner `in
the wrong order' can be written `in the correct order', i.e.\ that
we can always write $\pi\circ \xi$ as $\xi^\prime\circ \pi^\prime$
for some different filter $\xi^\prime$ and corner $\pi^\prime$.
Then when we have pure maps $f=\xi_1\circ\pi_1$ and $g=\xi_2\circ\pi_2$
their composition is $f\circ g = \xi_1\circ \pi_1 \circ \xi_2\circ
\pi_2$ and we can interchange $\pi_1$ and $\xi_2$ to get a composition
of two corners with two filters, which is indeed pure. So what we
need to show to establish that our definition of purity is closed
under composition is that filters and corners can be interchanged as assumed before.
It is sufficient to prove this for the standard corner and filter.
To summarize, we must show that for a given effect $q$ and idempotent~$p$
there exist effects $a$ and $b$ such that $\pi_p\circ\xi_q =
\xi_a\circ\Phi\circ \pi_b$ where $\Phi$ is some isomorphism.

The same problem of establishing that pure maps are closed under
composition in von Neumann algebras is related to the existence of
\emph{polar decompositions} of elements. By applying the polar
decomposition to $\sqrt{p}\sqrt{q}$ for positive $p$ and $q$ we
have a partial isometry $u$ such that $\sqrt{p}q\sqrt{p} =
u(\sqrt{q}p\sqrt{q})u^*$. The isomorphism $\Phi$ above is then the
conjugation map $a\mapsto uau^*$ restricted to the appropriate
domains. Partial isometries can also be defined for EJAs, and an analogous polar decomposition theorem can be stated:
\begin{definition}
    Let $\Phi\colon E\rightarrow E$ be a positive linear map on an
    EJA $E$. We denote its adjoint with respect to the inner product
    by $\Phi^*$, that is the unique linear map with~$\inn{\Phi^*(a),b} =
    \inn{a,\Phi(b)}$. We call $\Phi$ a \emph{partial isometry}
    when $\Phi\Phi^*$ and $\Phi^*\Phi$ are projections.
\end{definition}

\begin{theorem}\label{theor:polardecomp}
    Polar Decomposition: Let $p$ and $q$ be positive elements
    of a Euclidean Jordan algebra $E$. There exists a partial
    isometry $\Phi: E\rightarrow E$
    such that $Q_qQ_p = \Phi Q_{\sqrt{Q_pq^2}}$,  $\Phi(1) = \ceil{Q_q p}$, $\Phi^*(1)
    = \ceil{Q_p q}$, $\Phi^*\Phi = Q_{\ceil{Q_p q}}$
    and $\Phi\Phi^* = Q_{\ceil{Q_q p}}$.
\end{theorem}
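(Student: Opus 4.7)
The plan is to perform the Hilbert-space polar decomposition of $T := Q_q \after Q_p$, viewed as a bounded linear operator on the Hilbert space $E$, and to realize the resulting partial isometry through the Jordan structure. Since every $Q_a$ is self-adjoint, $T^* = Q_p \after Q_q$, and the fundamental equality (proposition~\ref{prop:quadraticrep}) gives $T^* T = Q_p Q_{q^2} Q_p = Q_{Q_p q^2}$. Setting $r := \sqrt{Q_p q^2}$, the identity $Q_{a^2} = Q_a^2$ (a consequence of the fundamental equality with $b=1$) applied to $a = r$ yields $Q_r^2 = Q_{r^2} = T^* T$, so $|T| := \sqrt{T^* T}$ equals $Q_r$.

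I then define $\Phi := Q_q \after Q_p \after Q_{r^{-1}}$, where $r^{-1}$ is the pseudo-inverse of $r$ living in $E_1(\ceil{r})$. The identities $Q_r Q_{r^{-1}} = Q_{\ceil{r}} = Q_{r^{-1}} Q_r$ (which follow from $Q_r(r^{-1}) = r$ and the fundamental equality), together with $\ker T = \ker T^* T = \ker Q_r = \ker Q_{\ceil{r}}$, give $\Phi \after Q_r = Q_q Q_p Q_{\ceil{r}} = Q_q Q_p$, which is the required factorisation. Since $\Phi$ is a composition of positive maps, it is itself positive. Reading $\Phi^*(1)$ off the explicit formula:
$\Phi^*(1) = Q_{r^{-1}} Q_p Q_q(1) = Q_{r^{-1}}(Q_p q^2) = Q_{r^{-1}}(r^2) = Q_{r^{-1}} Q_r(1) = \ceil{r}$.

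For $\Phi^*\Phi$, the standard Hilbert-space polar decomposition identifies it as the orthogonal projection onto the range of $Q_r$, which is the Peirce component $E_1(\ceil{r})$ (closed, since every $a \in E_1(\ceil{r})$ satisfies $a = Q_r(Q_{r^{-1}}a)$). For any Jordan projection $s$, the Hilbert-space orthogonal projection onto $E_1(s)$ is precisely $Q_s$, since $Q_s$ is self-adjoint, idempotent, and has image $E_1(s)$. Hence $\Phi^*\Phi = Q_{\ceil{r}}$. To match the statement I verify two ceiling equalities: $\ceil{r} = \ceil{\sqrt{Q_p q^2}} = \ceil{Q_p q^2}$ (from the spectral fact $\ceil{\sqrt{c}} = \ceil{c}$) and $\ceil{Q_p q^2} = \ceil{Q_p q}$. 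The second holds because for any Jordan projection $s$ the chain
$s * Q_p q = 0 \iff \inn{Q_p s, q} = 0 \iff Q_p s * q = 0 \iff Q_p s * q^2 = 0 \iff s * Q_p q^2 = 0$
shows that $Q_p q$ and $Q_p q^2$ have the same orthogonal projections, and hence the same ceiling; the middle step uses the elementary lemma $x * y = 0 \iff x * y^n = 0$ for positive $x, y$, proved via the spectral decomposition of $y$. This gives $\Phi^*\Phi = Q_{\ceil{Q_p q}}$ and $\Phi^*(1) = \ceil{Q_p q}$.

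The remaining identities $\Phi\Phi^* = Q_{\ceil{Q_q p}}$ and $\Phi(1) = \ceil{Q_q p}$ follow by a symmetry argument. Running the entire construction above with $p$ and $q$ interchanged produces a partial isometry $\tilde\Phi$ for $T^* = Q_p Q_q$ satisfying $\tilde\Phi^* \tilde\Phi = Q_{\ceil{Q_q p}}$ and $\tilde\Phi^*(1) = \ceil{Q_q p}$. Uniqueness of the polar decomposition — equivalently, the standard fact that the partial isometry is the same in the left and right polar decompositions of a given operator — forces $\tilde\Phi = \Phi^*$, which delivers the last two claims. I expect the main obstacle to lie in cleanly establishing the ceiling identifications $\ceil{r} = \ceil{Q_p q}$ and $\ker Q_r = \ker Q_{\ceil{r}}$; once these and the explicit formula for $\Phi$ are in hand, the rest is a bookkeeping exercise with the fundamental equality.
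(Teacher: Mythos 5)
Your construction is the paper's: the same explicit partial isometry $\Phi = Q_qQ_pQ_{(Q_pq^2)^{-1/2}}$, the same verification of $Q_qQ_p = \Phi Q_{\sqrt{Q_pq^2}}$, and essentially the same identification $\Phi^*\Phi = Q_{\ceil{Q_pq^2}} = Q_{\ceil{Q_pq}}$ (the paper gets the projection directly from the fundamental equality, $Q_{r^{-1}}Q_{r^2}Q_{r^{-1}} = Q_{Q_{r^{-1}}r^2} = Q_{\ceil{r}}$, rather than by appealing to the abstract polar decomposition; your orthogonality chain for $\ceil{Q_pq^2}=\ceil{Q_pq}$ is actually more detailed than the paper's one-line assertion). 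Where you genuinely diverge is the second half. The paper proves $\Phi(1)=\ceil{Q_qp}$ and $\Phi\Phi^* = Q_{\ceil{Q_qp}}$ by hand: it computes $\im\Phi^* = \ceil{Q_qp}$ from the kernel of $\Phi^*$, extracts idempotency of $\Phi(1)$ from $\inn{1,\Phi(1)^2}=\inn{1,\Phi(1)}$, cites Kadison--Ringrose for $\Phi\Phi^*$ being a projection, and closes the gap with the operator inequality $Q_{Q_qp^2} = \Phi Q_{Q_pq^2}\Phi^* \leq \|Q_{Q_pq^2}\|\,\Phi\Phi^*$. You replace all of this with the symmetry argument: run the construction with $p$ and $q$ swapped and invoke uniqueness of the polar decomposition to get $\tilde\Phi = \Phi^*$. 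This is sound and cleaner, but note that uniqueness only applies once you know $\Phi$ and $\tilde\Phi$ are the \emph{canonical} partial isometries, i.e.\ that their initial spaces are $\overline{\mathrm{ran}}\,|T|$ and $\overline{\mathrm{ran}}\,|T^*|$ (equivalently $\ker\Phi \supseteq \ker Q_{\ceil{r}}$, which follows from $Q_{r^{-1}} = Q_{r^{-1}}Q_{\ceil{r}}$); your computations $\Phi^*\Phi = Q_{\ceil{r}}$ and $\tilde\Phi^*\tilde\Phi = Q_{\ceil{\tilde r}}$ supply exactly this, so the step should be stated as relying on them. What your route buys is avoiding the external citation for ``$\Phi^*\Phi$ a projection implies $\Phi\Phi^*$ a projection'' and the somewhat ad hoc norm inequality; what it costs is that everything is routed through Hilbert-space operator theory rather than staying inside the Jordan-algebraic calculus, and you must still verify (as you do note) the supporting spectral facts $\ker Q_r = \ker Q_{\ceil{r}}$ and the pseudo-inverse identities, which the paper also uses implicitly.
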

To see how this is related to polar decomposition note that if we plug in the unit in $Q_qQ_p$ that we will get $Q_q p^2 = Q_qQ_p1 = \Phi Q_{\sqrt{Q_pq^2}}1 = \Phi(Q_p q^2)$. This polar decomposition theorem should not be confused with the already established notion of polar decomposition in Jordan algebras (see for instance \cite[Ch. VI]{faraut1994analysis}) that asserts the existence of a Jordan isomorphism between any two maximal collections of orthogonal atomic idempotents in a simple EJA. In the theory of generalized probabilistic theory, this property is also known as \emph{strong symmetry} \cite{barnum2014higher}.

The rest of this section is dedicated to proving theorem \ref{theor:polardecomp} and showing how it proves that pure maps are closed under composition.

First need a new notion:
\begin{definition}
	Let $f:E\rightarrow F$ be a positive linear map between EJAs. The
	\emph{image} of $f$ (if it exists) is the smallest effect
	$q$ such that $f(q)=f(1)$. We will denote the image of $f$
	by $\im{f}$.
\end{definition}

\begin{proposition}\label{eja:im}
	Any positive linear map $f\colon E\rightarrow F$ between Euclidean
	Jordan algebras has an image. This image is always an
	idempotent.
\end{proposition}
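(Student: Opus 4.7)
The plan is to realize $\im f$ as the complement of the greatest idempotent annihilated by $f$. First, Lemma~\ref{lem:EJAfloor1} ensures any image must itself be an idempotent: any effect $q$ with $f(q) = f(1)$ also satisfies $f(\floor{q}) = f(1)$, with $\floor{q} \leq q$ an idempotent. So it suffices to produce a greatest idempotent $p_0$ with $f(p_0) = 0$ and take $\im f := p_0^\perp$.

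Let $I := \{p \in E : p^2 = p,\ f(p) = 0\}$, which contains $0$. I first check $I$ is upward directed in the lattice of idempotents of $E$. Given $p_1, p_2 \in I$, write the spectral decomposition $p_1 + p_2 = \sum_i \lambda_i e_i$ with $\lambda_i > 0$ and orthogonal atomic idempotents $e_i$. Positivity of $f$ together with $f(p_1 + p_2) = 0$ forces each $f(e_i) = 0$, so the idempotent $e := \sum_i e_i$ lies in $I$. A short Peirce argument shows $e \geq p_1, p_2$: for any idempotent $r \geq p_1, p_2$, one has $r * (p_1 + p_2) = p_1 + p_2$ by Proposition~\ref{prop:EJAidempotent}, so $p_1 + p_2 \in E_1(r)$ (Proposition~\ref{prop:peircedecomp}); then the spectral idempotents $e_i$ also lie in $E_1(r)$ and, as idempotents there, are bounded by its unit $r$, whence $e \leq r$. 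Thus $e$ is the join of $p_1$ and $p_2$ in the projection lattice, and $I$ is directed.

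Since $E$ is a JBW-algebra (Theorem~\ref{theor:chu}), the lattice of idempotents of $E$ is complete, so $p_0 := \sup I$ exists as an idempotent. To show $p_0 \in I$, fix any state $\omega$ on $F$. The composite $\omega \circ f$ is a positive linear functional on $E$; by self-duality every positive functional on $E$ has the form $\inn{\cdot, \rho}$ for some $\rho \geq 0$, hence is a non-negative scalar multiple of a state, and states are normal by Theorem~\ref{theor:chu}. Applied to the directed family $I$ with supremum $p_0$, normality yields $(\omega \circ f)(p_0) = \sup_{p \in I} (\omega \circ f)(p) = 0$. Since this holds for every state $\omega$ on $F$, and the positive elements of $F$ exhaust the positive functionals via the inner product, self-duality applied with the element $f(p_0) \geq 0$ gives $\inn{f(p_0), f(p_0)} = 0$, so $f(p_0) = 0$ and $p_0 \in I$.

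Finally, set $\im f := p_0^\perp$; then $f(\im f) = f(1) - f(p_0) = f(1)$. For minimality, let $q$ be any effect with $f(q) = f(1)$, so $f(q^\perp) = 0$. Decomposing $q^\perp = \sum_j \mu_j r_j$ with $\mu_j > 0$, positivity of $f$ forces each $f(r_j) = 0$, so $\ceil{q^\perp} = \sum_j r_j \in I$, hence $\ceil{q^\perp} \leq p_0$ and $q \geq \floor{q} = \ceil{q^\perp}^\perp \geq p_0^\perp = \im f$. The trickiest step is showing $f(p_0) = 0$: since $f$ need not itself be normal, one must recognize $p_0$ as a \emph{directed} supremum in the projection lattice and transfer normality from the (automatically normal) states of $F$ via self-duality.
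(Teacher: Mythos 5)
Your argument is essentially the paper's own proof run in de~Morgan dual form. The paper also reduces to idempotents via Lemma~\ref{lem:EJAfloor1}, then defines $\im f = \inf\{p : p^2=p,\ f(p)=f(1)\}$ in the complete lattice of idempotents of the JBW-algebra $E$ and invokes normality (of states, hence of all maps, since states separate points) to conclude $f(\im f) = f(1)$; you instead take the supremum of the annihilated idempotents and complement. Your version is in one respect more careful than the paper's: normality, as established in Theorem~\ref{theor:chu}, concerns \emph{directed} suprema, so the relevant family really should be checked to be directed, which the paper glosses over and you address explicitly.

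There is, however, a local flaw in that very step. The ``short Peirce argument'' you give shows that $e \leq r$ for every idempotent $r$ with $r \geq p_1, p_2$ --- that is, the \emph{least}-upper-bound half of the claim that $e$ is the join --- but directedness of $I$ needs the \emph{upper-bound} half, $e \geq p_1, p_2$, which you assert without proof. The fact is true and the fix is short: since each spectral eigenvalue of $p_1+p_2$ is at most $2$, one has $p_1 \leq p_1 + p_2 \leq 2e$, so $\tfrac12 p_1 \leq e$; Proposition~\ref{prop:EJAidempotent} then gives $e * \bigl(\tfrac12 p_1\bigr) = \tfrac12 p_1$, hence $e * p_1 = p_1$, hence $p_1 \leq e$ (and likewise for $p_2$). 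With that repaired, the remainder of your proof --- transferring normality through states of $F$ and closing with self-duality applied to $f(p_0)$, then verifying minimality via $\ceil{q^\perp} \in I$ --- is correct and matches the paper's intent.
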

\begin{proof}
	Because of lemma \ref{lem:EJAfloor1} a positive linear map $f$ satisfies $f(q)=f(1)$ if and only if $f(\floor{q})=f(1)$ so we can restrict to effects which satisfy $q=\floor{q}$; viz.~the idempotents.
	
	By theorem \ref{theor:chu} EJAs are JBW-algebras (see \cite{alfsen2012geometry}) so that the idempotents form a complete lattice. Furthermore, all states are normal, meaning they preserve infima. Because the states separate the maps, all maps are also normal. We conclude that $\im{f} = \inf\{p~;~p^2=p, f(p)=f(1)\}$ exists and that $f(\im{f}) = f(\inf\{p~;~f(p)=f(1)\})=\inf_p f(p) = f(1)$.
\end{proof}

\begin{proof}[Proof of Theorem \ref{theor:polardecomp}]
	Let $\Phi = Q_qQ_pQ_{(Q_p q^2)^{-1/2}}$ so that $\Phi^* = Q_{(Q_p q^2)^{-1/2}}Q_pQ_q$because $Q_a^*=Q_a$ for all $a$. Then $Q_{(Q_p q^2)^{1/2}}\Phi^* = Q_{\ceil{Q_pq^2}}Q_pQ_q = Q_p Q_q$. By taking adjoints we then get $Q_qQ_p = \Phi Q_{\sqrt{Q_p q^2}}$ as desired. Note that 
	\begin{equation*}
	\Phi^*\Phi  \ =\  Q_{(Q_p q^2)^{-1/2}}Q_pQ_qQ_qQ_pQ_{(Q_p
	q^2)^{-1/2}} \ =\  Q_{(Q_p q^2)^{-1/2}} Q_{Q_p q^2} Q_{(Q_p
	q^2)^{-1/2}} \ =\  Q_{\ceil{Q_p q^2}}
	\end{equation*}
	by application of the fundamental equality. Since $\ceil{Q_p q^2} = \ceil{Q_p \ceil{q^2}} = \ceil{Q_p \ceil{q}} = \ceil{Q_p q}$ this can be simplified to $\Phi^*\Phi = Q_{\ceil{Q_p q}}$.  Because $\Phi^*\Phi$ is a projection we can use \cite[Proposition 6.1.1]{kadison2015fundamentals} to conclude that $\Phi\Phi^*$ must be projection as well. By a simple calculation $\Phi^*(1) = \ceil{Q_p q}$ so that it remains to show that $\Phi(1) = \ceil{Q_q p}$ and that $\Phi\Phi^*(1)=\ceil{Q_q p}$ since this latter condition (in combination with the knowledge that $\Phi\Phi^*$ is a projection) is sufficient to conclude that $\Phi\Phi^* = Q_{\ceil{Q_q p}}$.

	Suppose $\Phi^*(s)=0$ then $\inn{1,\Phi^*(s)} = 0 = \inn{Q_pQ_q s, (Q_p q^2)^{-1/2}}$. Since $\ceil{(Q_p q^2)^{-1/2}} = \ceil{Q_p q^2}$ this gives $0=\inn{Q_pQ_q s, Q_p q^2} = \inn{s, Q_qQ_{p^2}Q_q 1} = \inn{s, Q_{Q_q p^2}1} = \inn{s, Q_q p^2}$. We conclude that $\Phi^*(s)=0$ if and only if $s\perp \ceil{Q_q p}$ so that $\im{\Phi^*} = \ceil{Q_q p}$. We of course also have $0=\inn{1,\Phi^*(s)} = \inn{\Phi(1),s}$ so that $\ceil{\Phi(1)} = \im{\Phi^*}= \ceil{Q_q p}$. Because $\inn{1,(\Phi(1))^2} = \inn{\Phi(1),\Phi(1)} = \inn{\Phi^*\Phi(1),1} = \inn{\ceil{Q_p q}, 1} = \inn{\Phi^*(1),1} = \inn{1,\Phi(1)}$ we conclude that $\Phi(1)=(\Phi(1))^2$ so that $\Phi(1)=\ceil{\Phi(1)} = \ceil{Q_q p}$. 

	By a similar argument as above we can show that
	$\im{\Phi\Phi^*}=\ceil{(\Phi\Phi^*)(1)}$ which gives
	$(\Phi\Phi^*)(1) \leq \im{\Phi\Phi^*} \leq \im{\Phi^*} =
	\ceil{Q_q p}$. For the other direction we recall that we
	had $Q_q Q_p = \Phi Q_{\sqrt{Q_pq^2}}$ so that $Q_{Q_qp^2}
	= Q_q Q_p Q_p Q_q = \Phi Q_{\sqrt{Q_pq^2}}Q_{\sqrt{Q_pq^2}}\Phi^*
	= \Phi Q_{Q_pq^2}\Phi^* \leq \| Q_{Q_pq^2} \|\Phi\Phi^*$.
	By inserting the unit into the expression and taking the
	ceiling we are left with $\ceil{Q_qp} = \ceil{Q_{Q_qp^2}1}
	\leq \ceil{(\Phi\Phi^*)(1)}$.
\end{proof}

\begin{proposition}
	Let $\xi_q: E_1(\ceil{q})\rightarrow E$, $\xi_q =
	Q_{\sqrt{q}}\colon \iota$ be the standard filter of an
	effect $q$ and $\pi_p\colon E\rightarrow E_1(p)$, $\pi_p = r\circ
	Q_p$ be the standard corner of an idempotent effect $p$.
	Then $\pi_p \circ \xi_q = \xi_a\circ \Phi\circ \pi_b$ where
	$a$ and $b$ are some effects and $\Phi$ is an isomorphism.
	In other words: $\pi_p\circ \xi_q$ is pure.
\end{proposition}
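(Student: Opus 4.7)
My plan is to apply the polar decomposition theorem (Theorem~\ref{theor:polardecomp}) to the positive pair $(\sqrt{q},p)$. This yields a partial isometry $\Phi\colon E\to E$ satisfying
\[Q_pQ_{\sqrt{q}}=\Phi Q_{\sqrt{b_0}}, \qquad \Phi^{*}\Phi=Q_b, \qquad \Phi\Phi^{*}=Q_{a'},\]
with $b_0:=Q_{\sqrt{q}}p$, $b:=\ceil{b_0}$, and $a':=\ceil{Q_pq}$ (using $\ceil{Q_p\sqrt{q}}=\ceil{Q_p\ceil{q}}=\ceil{Q_pq}$ as in the proof of that theorem). I will take $b$ itself, an idempotent of $E_1(\ceil{q})$, as the effect appearing under $\pi_b$, and the effect $a:=\Phi(b_0)\in E_1(p)$ (whose ceiling will turn out to be $a'$) as the effect under $\xi_a$.

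For $x\in E_1(\ceil{q})$, the composition unfolds as $\pi_p\circ\xi_q(x)=Q_pQ_{\sqrt{q}}(x)=\Phi(Q_{\sqrt{b_0}}(x))$, because the orthogonal projection $r_p$ inside $\pi_p$ acts trivially on the image of $Q_p\subseteq E_1(p)$. Since $\sqrt{b_0}\in E_1(b)$, the fundamental equality (item~5 of Proposition~\ref{prop:quadraticrep}) gives $Q_{\sqrt{b_0}}=Q_bQ_{\sqrt{b_0}}Q_b$, so $Q_{\sqrt{b_0}}(x)=Q_{\sqrt{b_0}}(Q_bx)$; and $Q_bx=\pi_b(x)$ is precisely the standard corner of the idempotent $b$ in $E_1(\ceil{q})$. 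Granted that $\Phi|_{E_1(b)}\colon E_1(b)\to E_1(a')$ is a Jordan isomorphism, it commutes with the quadratic representation as $\Phi\circ Q_{\sqrt{b_0}}=Q_{\Phi(\sqrt{b_0})}\circ\Phi=Q_{\sqrt{a}}\circ\Phi$ on $E_1(b)$ (using that a Jordan isomorphism preserves positive square roots). Chaining these identities gives $\pi_p\circ\xi_q=\xi_a\circ\Phi|_{E_1(b)}\circ\pi_b$, which is the desired factorisation.

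The main obstacle is establishing that $\Phi|_{E_1(b)}$ is an isomorphism in $\EJA_{psu}$. Bijectivity with inverse $\Phi^{*}|_{E_1(a')}$ is immediate from $\Phi^{*}\Phi=Q_b$ and $\Phi\Phi^{*}=Q_{a'}$; moreover $\Phi$ is positive as a composition of quadratic representations, and $\Phi^{*}$ is positive because the EJA cone is self-dual (Theorem~\ref{theor:chu}). Unitality in both directions follows from $\Phi(1_E-b)=0$---which holds because $1_E-b\in\ker Q_b=\ker\Phi^{*}\Phi$ forces $\inn{\Phi(1_E-b),\Phi(1_E-b)}=\inn{\Phi^{*}\Phi(1_E-b),1_E-b}=0$---combined with the polar-decomposition identities $\Phi(1_E)=a'$ and $\Phi^{*}(1_E)=b$, which together give $\Phi(b)=a'$ and $\Phi^{*}(a')=b$. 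We are thus reduced to a unital positive linear bijection between the JBW-algebras $E_1(b)$ and $E_1(a')$ whose inverse is again unital and positive, and by the standard result that such a map between JBW-algebras is automatically a Jordan isomorphism we conclude. This Jordan-isomorphism property is precisely what licenses the commutation $\Phi\circ Q_{\sqrt{b_0}}=Q_{\sqrt{a}}\circ\Phi$ invoked above, and also guarantees $\ceil{a}=\Phi(\ceil{b_0})=\Phi(b)=a'$ so that $\xi_a$ really does have source $E_1(a')$, closing the argument.
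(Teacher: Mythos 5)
Your proof is correct, and it rests on the same engine as the paper's --- Theorem~\ref{theor:polardecomp} applied to the pair $(\sqrt{q},p)$, yielding the same factorisation $\xi_{Q_pq}\circ\Phi\circ\pi_{\ceil{Q_{\sqrt q}p}}$ --- but the execution is genuinely different. The paper never manipulates the identity $Q_pQ_{\sqrt q}=\Phi Q_{\sqrt{b_0}}$ directly: it first invokes the \emph{universal properties} of the standard filter (since $f(1)=Q_pq$) and of the standard corner (since $\im\cl{f}=\ceil{Q_{\sqrt q}p}$) to obtain the factorisation abstractly, and then only has to recognise the resulting middle map as the partial isometry $\Phi^*$ from the polar decomposition, so that $\Phi\Phi^*=Q_{\ceil{p\&q}}$ and $\Phi^*\Phi=Q_{\ceil{q\&p}}$ immediately give invertibility. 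You instead build the factorisation by hand, inserting $Q_b=\pi_b$ via the fundamental equality and then commuting $\Phi$ past $Q_{\sqrt{b_0}}$ to produce $Q_{\sqrt a}=\xi_a$. That commutation is the one place where you need strictly more than the paper does: it requires knowing that $\Phi|_{E_1(b)}$ is a \emph{Jordan} isomorphism (so that $\Phi\circ Q_c=Q_{\Phi(c)}\circ\Phi$ and $\Phi(\sqrt{b_0})=\sqrt{a}$), which you correctly reduce to the fact that a unital order isomorphism between EJAs is automatically a Jordan isomorphism --- a result the paper does prove, but only later (in Section~\ref{sec:diamond}) and never uses here. Your verification that $\Phi|_{E_1(b)}$ is a unital order isomorphism (via $\Phi(1-b)=0$, $\Phi(1)=a'$, $\Phi^*(1)=b$, and self-duality for positivity of $\Phi^*$) is sound and there is no circularity, so the argument stands; the trade-off is that the paper's route is lighter on Jordan-algebraic machinery at the cost of leaning on the universal properties, while yours is more computational and self-contained once the order-isomorphism lemma is granted.
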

\begin{proof}
	Define the shorthand $q\&p:= Q_{\sqrt{q}}(p)$. Let $f = \pi_p \circ \xi_q: E_{\ceil{q}} \rightarrow E_p$. Because $f(1) = \pi_p (\xi_q(1)) = \pi_p(q) = p\& q$ we see that there must exist $\cl{f}: E_{\ceil{q}}\rightarrow E_{\ceil{p\&q}}$ such that $\xi_{p\& q}\circ \cl{f} = f$ where $\xi_{p\& q}:E_{\ceil{p\&q}}\rightarrow E_p$ by the universal property of the filter. This $\cl{f}$ is given by $\cl{f} = Q_{(p\& q)^{-1/2}}\circ f$ so that $\cl{f}(1) = Q_{(p\& q)^{-1/2}}(p\& q) = \ceil{p\& q} = 1$ since the codomain is $E_{\ceil{p\& q}}$. We will ignore the restriction and inclusion maps present in the filter and corner so that we can write $f = Q_p Q_{\sqrt{q}}$ and similarly $\cl{f} = Q_{(p\& q)^{-1/2}}Q_p Q_{\sqrt{q}}$.

	Similar to the argument used in the proof of theorem \ref{theor:polardecomp} we can show that $\im{\cl{f}} = \ceil{q\& p}$. Then we can use the universal property of the corner to find a map $\Phi: E_{\ceil{q\& p}}\rightarrow E_{\ceil{p\& q}}$ such that $\Theta\circ \pi_{\ceil{q\& p}} = \cl{f}$. Because $\cl{f}$ and $\pi_{\ceil{q\& p}}$ are unital, $\Phi$ has to be unital as well. Note that $\Phi$ is just a restriction of $\cl{f}$ to the appropriate domain and that $\cl{f} = Q_{(p\& q)^{-1/2}}Q_p Q_{\sqrt{q}}$ is exactly the same as $\Phi^*$ in the proof of theorem \ref{theor:polardecomp}. We can conclude as a consequence that $\Phi\Phi^* = Q_{\ceil{p\& q}}$ while $\Phi^*\Phi = Q_{\ceil{q\& p}}$. These are of course the identity maps on $E_{\ceil{p\&q}}$ respectively $E_{\ceil{q\& p}}$ so that $\Phi^* = \Phi^{-1}$. We conclude that $f = \xi_{p\& q} \circ \cl{f} = \xi_{p\& q}\circ \Phi \circ \pi_{\ceil{q\& p}}$ where $\Phi$ is an isomorphism.
\end{proof}
\begin{corollary}\label{cor:purepure}
The composition of pure maps is pure.
\end{corollary}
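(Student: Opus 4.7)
The plan is to invoke the preceding proposition, which gives the essential interchange rule, and then to assemble the pieces. Given two pure maps $f = \xi_1 \after \pi_1$ and $g = \xi_2 \after \pi_2$, I would write
\[
  f \after g \ =\ \xi_1 \after \pi_1 \after \xi_2 \after \pi_2,
\]
and observe that the only obstruction to this being manifestly a filter-after-corner is the middle piece $\pi_1 \after \xi_2$, which is a corner following a filter (the ``wrong'' order). The preceding proposition is precisely what is needed: it rewrites any such $\pi \after \xi$ as $\xi_a \after \Phi \after \pi_b$ for some filter $\xi_a$, some isomorphism $\Phi$, and some corner $\pi_b$.

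After this substitution,
\[
  f \after g \ =\ (\xi_1 \after \xi_a) \after \Phi \after (\pi_b \after \pi_2).
\]
Next I would appeal to three routine closure facts already noted in the paper: (i) the composition of two filters is again a filter (cited from \cite{basthesis}); (ii) the composition of two corners is again a corner, which follows immediately from unfolding the universal property; and (iii) a filter pre-composed with an isomorphism is again a filter, which was observed just after the definitions of filters and corners. Using (iii), the map $\xi_1 \after \xi_a \after \Phi$ can be viewed as a single filter (or, equivalently, one can slide $\Phi$ past $\xi_a$ by the same reasoning and then apply (i)). Using (ii), the map $\pi_b \after \pi_2$ is a single corner.

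Therefore $f \after g$ is exhibited as a filter composed with a corner, which by Definition~\ref{def:purity} means it is pure. There is no genuine obstacle left at this point: the entire difficulty was absorbed into the interchange proposition, which in turn rested on the polar decomposition theorem~\ref{theor:polardecomp}. The remaining argument is just bookkeeping on the closure properties of filters and corners under composition.
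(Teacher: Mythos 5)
Your proof is correct and takes essentially the same route as the paper's own argument: both apply the preceding interchange proposition to the middle term $\pi_1 \after \xi_2$ and then absorb the resulting isomorphism and invoke closure of filters and of corners under composition. The only cosmetic difference is that the paper carries explicit isomorphisms $\Theta_i$ in its decompositions of $f$ and $g$, which your step (iii) handles implicitly.
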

\begin{proof}
	Let $f_1$ and $f_2$ be pure, then $f_i=\xi_i\circ \Theta_i\circ \pi_i$, so that $f_1\circ f_2 = \xi_1\circ \Theta_1\circ \pi_1 \circ \xi_2 \circ \Theta_2\circ \pi_2 = \xi_1^\prime \circ \xi^\prime\circ \Theta^\prime \circ \pi^\prime \circ \pi_2^\prime$ by the previous proposition and writing $\xi_1\circ \Theta_1 = \xi_1^\prime$ and $\Theta_2\circ \pi_2 = \pi_2^\prime$ where $\xi_1^\prime$ and $\pi_2^\prime$ are again a filter respectively a corner. But now since a composition of filters is again a filter and a composition of corners is again a corner we see that $f_1\circ f_2$ is indeed pure.
\end{proof}

\section{Diamond adjointness and positivity}\label{sec:diamond}
Since Euclidean Jordan algebras are also Hilbert spaces, we can find for any positive map an adjoint with respect to the inner product. This means that the category of all EJAs with positive (not necessarily subunital) maps is a dagger category. The adjoint of a subunital map is not necessarily subunital again however, so that $\EJA_{psu}$ is \emph{not} a dagger category. However, the set of pure maps is closed under taking adjoints (which can be shown by a simple case analysis), so that this restricted category \emph{is} a dagger category. This is not an accident: a consequence of the results in this section will be that $\EJA_{psu}$ is a \emph{$\dagger$-effectus}, a type of structure already introduced as an abstract version of the category of von Neumann algebras where the pure maps also form a dagger category. For more information regarding $\dagger$-effectuses we refer to \cite{basthesis}.

An important notion in an effectus is that of $\diamond$-adjointness. This is a possibilistic alternative to adjointness that can be defined even when there is no obvious choice of dagger. In this section we will study $\diamond$-adjointness in $\EJA_{psu}$ and show that it behaves similarly to $\diamond$-adjointness in von Neumann algebras. In particular we will give a characterization of pure $\diamond$-self-adjoint maps and show that a pure $\diamond$-\emph{positive} map $f:E\rightarrow E$ is completely determined by its image at the unit: $f = Q_{\sqrt{f(1)}}$ and thus that the only pure $\diamond$-positive maps are the quadratic representation maps $Q_a$ for some positive $a$. As these quadratic representation maps are the Jordan equivalent of the \emph{sequential product} map $b\mapsto aba$ \cite{wetering2018characterisation}, this can be seen as a new characterization of the sequential product.

\begin{definition}
Let $f: E\rightarrow F$ be a positive subunital map and write $\Idem(E)$
for the set of idempotents of~$E$. Define
the maps $f^\diamond\colon \Idem(F)\to\Idem(E)$
and $f_\diamond\colon \Idem(E)\to\Idem(F)$
by 
\begin{equation*}
f^\diamond(p)\ =\ \ceil{p\circ f}
	\qquad\text{and}\qquad
f_\diamond(q)\ =\ \im( Q_q\circ f ).
\end{equation*}
We say that $f:E\rightarrow F$ is \emph{$\diamond$-adjoint} to $g:F\rightarrow E$ when $f^\diamond = g_\diamond$ or equivalently $f_\diamond = g^\diamond$ \cite{basthesis}. We call $f:E\rightarrow E$ \emph{$\diamond$-self-adjoint} when $f$ is $\diamond$-adjoint to itself, and we call $f$ \emph{$\diamond$-positive} when there exists a $\diamond$-self-adjoint $g$ such that $f=g\circ g$.
\end{definition}

It can be shown that $f^\diamond(p)\leq q^\perp$ iff $f_\diamond(q)\leq p^\perp$ so that the diamond defines a Galois connection between the orthomodular lattices of idempotents. As a result we get a functor $\diamond: \EJA_{psu}\rightarrow $ \textbf{OMLatGal} \cite{basthesis}.
Note that $\diamond$-self-adjointness is weaker than regular self-adjointness:
\begin{proposition}\label{prop:diamond-adjointness}
    Any self-adjoint operator~$f:E\rightarrow E$ on an EJA~$E$
        is~$\diamond$-self-adjoint.
    In particular $Q_a$ is $\diamond$-self-adjoint for any~$a \in E$.
    Consequently, $Q_a$ is $\diamond$-positive for positive~$a$.
\end{proposition}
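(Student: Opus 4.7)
The approach is to first establish the general claim that self-adjoint positive operators are $\diamond$-self-adjoint, and then deduce the two particular cases. The heart of the argument is to evaluate both $f^\diamond(p)$ and $f_\diamond(p)$ directly on an idempotent $p$ and show each reduces to $\ceil{f(p)}$ under the assumption $f = f^*$.

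For $f^\diamond(p) = \ceil{p \circ f}$, I would first use self-duality (Theorem~\ref{theor:chu}(4)) to identify the linear functional $p \circ f \colon E \to \R$ with the element $f^*(p) \in E$ via the inner product; under $f = f^*$ this yields $f^\diamond(p) = \ceil{f(p)}$. For $f_\diamond(p) = \im(Q_p \circ f)$, by Lemma~\ref{lem:EJAfloor1} together with Proposition~\ref{eja:im} the image can be taken to be an idempotent $r$, and the defining condition $Q_p f(r) = Q_p f(1)$ rearranges to $Q_p f(r^\perp) = 0$. Since $Q_p f(r^\perp) \geq 0$, self-duality makes this equivalent to $\inn{1, Q_p f(r^\perp)} = 0$, which unfolds as $\inn{f(p), r^\perp} = 0$ using $Q_p 1 = p$ and $f^* = f$. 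With the spectral decomposition $f(p) = \sum_i \lambda_i q_i$ (with $\lambda_i > 0$ and $q_i$ orthogonal atoms), this further reduces to $\inn{r^\perp, q_i} = 0$ for each~$i$; applying Proposition~\ref{prop:EJAidempotent} and Proposition~\ref{prop:quadraticrep}(4) converts each such condition to $r^\perp * q_i = 0$, hence $r^\perp * \ceil{f(p)} = 0$ and therefore $r^\perp \leq \ceil{f(p)}^\perp$. The largest such $r^\perp$ is thus $\ceil{f(p)}^\perp$, making the smallest valid $r$ equal to $\ceil{f(p)}$. Comparing, $f_\diamond(p) = \ceil{f(p)} = f^\diamond(p)$ as required.

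The remaining two assertions follow readily. Since $Q_a^* = Q_a$ by Proposition~\ref{prop:quadraticrep}(3), $Q_a$ is self-adjoint for every $a \in E$, and hence $\diamond$-self-adjoint by the argument above. For $a \geq 0$, the fundamental identity (Proposition~\ref{prop:quadraticrep}(5)) applied to $b = 1$ specializes to $Q_a = Q_{(\sqrt{a})^2} = Q_{Q_{\sqrt{a}} 1} = Q_{\sqrt{a}} Q_1 Q_{\sqrt{a}} = Q_{\sqrt{a}} \circ Q_{\sqrt{a}}$; as $Q_{\sqrt{a}}$ is already $\diamond$-self-adjoint, this exhibits $Q_a$ as $\diamond$-positive by definition.

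The main obstacle is the translation in the first step from the inner-product condition $\inn{r^\perp, f(p)} = 0$ to the Jordan-algebraic condition $r^\perp * \ceil{f(p)} = 0$; this passage needs the spectral decomposition to atomize the question and then Propositions~\ref{prop:EJAidempotent} and \ref{prop:quadraticrep}(4) to turn inner-product orthogonality of idempotents into Jordan orthogonality. Everything else amounts to routine bookkeeping.
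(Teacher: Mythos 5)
Your argument is correct, but it reaches the conclusion by a more explicit route than the paper does. You unwind both sides of the definition: you identify $f^\diamond(p)$ with $\ceil{f(p)}$, and you compute $f_\diamond(p)=\im(Q_p\circ f)$ from scratch by characterising the idempotents $r$ with $Q_pf(r^\perp)=0$, passing through $\inn{f(p),r^\perp}=0$ and then atomising via the spectral decomposition and Propositions~\ref{prop:EJAidempotent} and~\ref{prop:quadraticrep}(4) to get $r\geq\ceil{f(p)}$. The paper never computes $f_\diamond$ at all: it invokes the criterion from \cite[\S207III]{basthesis} that $\diamond$-self-adjointness is equivalent to the symmetric condition $f^\diamond(s)\leq t^\perp \iff f^\diamond(t)\leq s^\perp$ on idempotents, which (using $\inn{\ceil{q},s}=0\iff\inn{q,s}=0$) collapses to $\inn{f(s),t}=0\iff\inn{s,f(t)}=0$ and hence follows at once from $f=f^*$. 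Both proofs rest on the same kernel, namely self-adjointness with respect to the inner product, but yours is self-contained where the paper leans on an external lemma, and it yields the sharper explicit formula $f^\diamond=f_\diamond=\ceil{f(\,\cdot\,)}$ at the cost of some extra work. Two small points worth tightening: the identification of the functional $p\circ f$ with the element $f^*(p)$ is the Riesz representation coming from the Hilbert-space structure (self-duality is only needed afterwards to control positivity), and you should record that $f(p)\leq f(1)\leq 1$, so that $f(p)$ is an effect and Proposition~\ref{prop:EJAceilfloor} applies to give $\lambda_i>0$. The two closing assertions about $Q_a$ are handled exactly as in the paper.
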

\begin{proof}
Let~$f$ be any self-adjoint operator.
It suffices to show~$f^\diamond(s) \leq t^\perp \iff f^\diamond(t) \leq s^\perp$
        for all idempotents~$s,t \in E$
    (see~\cite[\S207III]{basthesis}).
This is equivalent to
    \begin{equation}\label{selfadjointform}
        \inn{f^\diamond(s),t}\ =\ 0\quad \iff\quad  \inn{s,f^\diamond(t)} \ =\  0 \qquad (s,t \in E \text{ idempotents}).
    \end{equation}
By the spectral theorem~$\inn{\ceil{q},s} = 0 \iff \inn{q,s}=0$
    for any positive~$q$ and idempotent~$s$,
    so \eqref{selfadjointform}
    is equivalent to~$\inn{f(s),t} = 0 \iff \inn{s,f(t)}=0$,
        which clearly holds as~$f$ is self-adjoint.

Pick any positive~$a\in E$.
    By the fundamental identity, we have~$Q_a = Q_{\sqrt{a}^2} = Q_{\sqrt{a}}^2$,
        so~$Q_a$ is the square of a~$\diamond$-self-adjoint map,
            hence~$\diamond$-positive.
\end{proof}

The rest of this section contains the necessary work to prove the following theorem characterizing the pure $\diamond$-positive maps:
\begin{theorem}
    \label{super-duper-theorem}
    Let $g: E\rightarrow E$ be a pure $\diamond$-positive map and write $p:= g(1)$, then $g = Q_{\sqrt{p}}$.
\end{theorem}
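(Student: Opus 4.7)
The plan divides into: extracting $\diamond$-self-adjointness of $g$ itself from $\diamond$-positivity; bringing $g$ into canonical pure form; using $\diamond$-self-adjointness to pin down the corner; and using the stronger $\diamond$-positivity to show that the isomorphism in the canonical form is the identity.

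\textbf{Step 1 ($g$ is $\diamond$-self-adjoint).} The assignment $f\mapsto f^\diamond$ is contravariantly functorial, and via the Galois connection between $f^\diamond$ and $f_\diamond$ the assignment $f\mapsto f_\diamond$ is covariantly functorial. Hence if $g=h\after h$ with $h^\diamond=h_\diamond$, then
\[
g^\diamond \ =\ h^\diamond\after h^\diamond \ =\  h_\diamond\after h_\diamond \ =\ g_\diamond,
\]
so $g$ itself is $\diamond$-self-adjoint.

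\textbf{Step 2 (Canonical form).} Since $g$ is pure, the results of Section~\ref{sec:filtercorner} (in particular Corollary~\ref{cor:purepure} and its proof) let us write
\[
g \ =\ \xi_a \after \Theta \after \pi_b
\]
for some effect $a$, idempotent $b$, and unital Jordan isomorphism $\Theta:E_1(b)\to E_1(\ceil{a})$ which is in fact a partial isometry by Theorem~\ref{theor:polardecomp}, so $\Theta^*=\Theta^{-1}$. Evaluating at $1$ yields $a=g(1)=p$.

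\textbf{Step 3 ($b=\ceil{p}$).} Using $\pi_b^*=\iota$, $\xi_p^*=Q_{\sqrt p}$ (co-restricted to $E_1(\ceil{p})$) and $\Theta^*=\Theta^{-1}$, one computes $g^*(1)=\Theta^{-1}(p)\in E_1(b)$. Since Jordan isomorphisms preserve ceilings,
\[
g^\diamond(1)\ =\ \ceil{g^*(1)}\ =\ \Theta^{-1}(\ceil{p})\ =\ b,
\]
whereas $g_\diamond(1)=\ceil{g(1)}=\ceil{p}$. Combining this with the $\diamond$-self-adjointness of $g$ from Step~1 forces $b=\ceil{p}$, so $\Theta$ is in fact an \emph{auto}morphism of $E_1(\ceil p)$.

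\textbf{Step 4 ($\Theta=\id$).} This is the crux. I invoke the full hypothesis $g=h\after h$ with $h$ $\diamond$-self-adjoint; merely the $\diamond$-self-adjointness of $g$ is too weak, since, for instance, non-trivial Jordan involutions are $\diamond$-self-adjoint yet typically fail to be $\diamond$-positive. Showing that $h$ is itself pure and applying Steps~1--3 to $h$, we bring $h$ into canonical form $h=\xi_{h(1)}\after\Theta_h\after\pi_{\ceil{h(1)}}$. Expanding $g=h\after h$ by means of the polar-decomposition identity $\pi_{\ceil{h(1)}}\after\xi_{h(1)}=Q_{\sqrt{h(1)}}$ and the intertwining rule $\Theta_h\after Q_a=Q_{\Theta_h(a)}\after\Theta_h$ yields
\[
g\ =\ \xi_{h(1)}\after Q_{\Theta_h(\sqrt{h(1)})}\after \Theta_h^2\after \pi_{\ceil{h(1)}}.
\]
Comparison of this with the canonical form of $g$ from Step~2 (using $g(1)=p$ to identify $h(1)$ and $\sqrt p$ up to the natural adjunctions) pins down $\Theta_h$, and hence $\Theta$, as the identity.

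\textbf{Step 5 (Conclusion).} With $b=\ceil p$ and $\Theta=\id$, the decomposition collapses to $g=\xi_p\after\pi_{\ceil p}=Q_{\sqrt p}\after Q_{\ceil p}=Q_{\sqrt p}$; the last equality holds because $\sqrt p\in E_1(\ceil p)$ forces $Q_{\sqrt p}=Q_{\sqrt p}\after Q_{\ceil p}$ by Proposition~\ref{prop:quadraticrep}. The main obstacle is Step~4: while $\diamond$-self-adjointness of $g$ alone yields $b=\ceil p$, extracting $\Theta=\id$ genuinely requires the existence of a $\diamond$-self-adjoint square root $h$ of $g$ and a careful tracking of the isomorphism parts through the composition $h\after h$.
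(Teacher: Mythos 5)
Your Steps 1--3 and 5 are essentially sound and run parallel to the paper (the paper reaches $b=\ceil{p}$ a bit more directly, from $g^\diamond(1)=\ceil{g(1)}$ and $g_\diamond(1)=\im g$, without invoking $\Theta^*=\Theta^{-1}$ --- a claim that is itself shaky, since a Jordan isomorphism need not be an isometry for an arbitrary associative inner product). The genuine gap is Step 4. Your expansion gives $g=\xi_{h(1)}\after Q_{\Theta_h(\sqrt{h(1)})}\after\Theta_h^2\after\pi_{\ceil{h(1)}}$, and to extract anything from a ``comparison'' with $g=\xi_p\after\Theta\after\pi_{\ceil{p}}$ you need two facts you never establish: (i) $\Theta_h(\sqrt{h(1)})=\sqrt{h(1)}$, without which you cannot merge $Q_{\Theta_h(\sqrt{h(1)})}$ into the filter or even identify $h(1)^2$ with $p$; and (ii) $\Theta_h^2=\id$, since the comparison at best identifies $\Theta$ with (a conjugate of) $\Theta_h^2$. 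These two facts are precisely the content of the paper's central lemma --- a faithful pure $\diamond$-self-adjoint $h$ equals $Q_{\sqrt{h(1)}}\after\Theta_h$ with $\Theta_h(\sqrt{h(1)})=\sqrt{h(1)}$ and $\Theta_h=\Theta_h^{-1}$ --- and neither is cheap: the fixed-point property is proved by an atomicity/spectral argument matching eigendecompositions, and the involution property by exploiting invertibility of $Q_q$ under the $\diamond$-functor, which in turn requires the reduction to the faithful case that you also omit. Moreover, your stated conclusion that the comparison ``pins down $\Theta_h$ as the identity'' is false: on $E=M_2(\C)^{\sa}$ take $T$ the transpose automorphism and $q$ a diagonal invertible effect; then $h=Q_{\sqrt{q}}\after T$ is pure and self-adjoint (hence $\diamond$-self-adjoint), $h\after h=Q_q$, yet $\Theta_h=T\neq\id$. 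Only $\Theta_h^2$, and hence $\Theta$, is forced to be trivial.

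This exposes the structural difference from the paper's argument: the paper never tries to show that the isomorphism in $g$'s canonical form is the identity by comparing decompositions. It instead fully characterizes the canonical form of the square root $h$ via the lemma above and then simply computes $h\after h=Q_q\after\Theta_h\after Q_q\after\Theta_h=Q_q\after Q_{\Theta_h(q)}\after\Theta_h^2=Q_{q^2}$, letting the involution cancel itself. Two further loose ends: you write ``showing that $h$ is itself pure'' but do not do so (the paper implicitly reads ``pure $\diamond$-positive'' as ``square of a \emph{pure} $\diamond$-self-adjoint map''), and the corestriction of $g$ to $E_1(\im g)$ handling the non-faithful case must be carried out before the key lemma applies, since its proof uses $\ceil{q}=1$.
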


First, we will need a well-known fact about Jordan algebras for which we need a short lemma.

\begin{lemma}\label{ordersharpprop}
    An effect $p$ is called \emph{order-sharp} when $q=0$ whenever both $q\leq p$ and $q\leq p^\perp$. An effect $p$ is order-sharp if and only if it is an idempotent.
\end{lemma}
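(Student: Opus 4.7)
The plan is to prove both implications of the equivalence ``$p$ order-sharp $\iff$ $p$ idempotent'' separately, using Proposition~\ref{prop:EJAidempotent} in one direction and the spectral theorem in the other.

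For the easy direction ($\Leftarrow$), suppose $p$ is an idempotent and take an effect $q$ with $0\leq q \leq p$ and $0\leq q\leq p^\perp$. Proposition~\ref{prop:EJAidempotent} tells us that $q\leq p$ is equivalent to $p*q = q$ and similarly $q\leq p^\perp$ gives $p^\perp * q = q$, i.e., $q - p*q = q$. Combining these yields $p*q = 0$, hence $q = p*q = 0$, so $p$ is order-sharp.

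For the harder direction ($\Rightarrow$), I would argue by contrapositive using the spectral theorem. Writing $p = \sum_{i=1}^n \lambda_i p_i$ with $\lambda_i>0$ and orthogonal atomic idempotents~$p_i$ (Proposition~\ref{prop:EJAceilfloor}), observe that if $p$ is not an idempotent then some $\lambda_j$ must lie strictly between $0$ and $1$, for otherwise all $\lambda_i = 1$ and $p = \sum_i p_i$ would be idempotent. Set
\begin{equation*}
q\ :=\ \min(\lambda_j,\,1-\lambda_j)\, p_j.
\end{equation*}
Then $q$ is a nonzero positive effect with $q\leq \lambda_j p_j \leq p$ (using that the remaining $\lambda_i p_i$ are positive and orthogonal to $p_j$) and similarly $q \leq (1-\lambda_j) p_j \leq p^\perp$, where the latter inequality follows from the decomposition $p^\perp = (1-\ceil{p}) + \sum_i (1-\lambda_i)p_i$ of $p^\perp$ into pairwise orthogonal positive pieces. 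This witnesses that $p$ fails to be order-sharp.

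The only step that requires some care is verifying the two inequalities $q \leq p$ and $q\leq p^\perp$: one needs that dropping orthogonal positive summands preserves the order, which follows immediately from the fact that sums of positive elements are positive (Theorem~\ref{theor:chu}). No serious obstacle is expected.
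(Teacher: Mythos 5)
Your proof is correct and takes essentially the same route as the paper: the same witness $\min(\lambda_j,1-\lambda_j)\,p_j$ extracted from the spectral decomposition for the order-sharp $\Rightarrow$ idempotent direction, and Proposition~\ref{prop:EJAidempotent} for the converse (the paper phrases that step via $Q_pq=q$ together with $Q_pq\leq Q_pp^\perp=0$ rather than via $p*q$, but the content is identical). No gaps.
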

\begin{proof}
    Let $a$ be an order-sharp effect and write $a=\sum_i \lambda_i p_i$. Let $r_i=\min\{\lambda_i,1-\lambda_i\}$, then $r_ip_i\leq a$ and $r_ip_i\leq a^\perp=1-a$ which implies that $r=0$, so either $\lambda_i=1$ or $\lambda_i=0$ for all $i$. But then as $a$ is a sum of orthogonal idempotents it is also an idempotent. For the other direction suppose $a$ is idempotent. Let $q\leq a$ and $q\leq a^\perp$. By $q\leq a$ we know that $Q_a q = q$, but we also have $Q_a q \leq Q_a a^\perp = 0$ so that $q=0$.
\end{proof}
\begin{proposition}
	A unital order isomorphism between EJAs is a Jordan isomorphism: that is, it preserves the Jordan multiplication.
\end{proposition}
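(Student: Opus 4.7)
The plan is to reduce preservation of the Jordan product to preservation of squares via the polarization identity $a * b = \tfrac{1}{2}\bigl((a+b)^2 - a^2 - b^2\bigr)$, and then to prove preservation of squares through the spectral theorem once we know how $\Phi$ acts on idempotents.

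First I would show that a unital order isomorphism $\Phi\colon E \to F$ preserves idempotents. Since $\Phi$ is linear and unital, it preserves complements: $\Phi(p^\perp) = 1 - \Phi(p) = \Phi(p)^\perp$. Being an order isomorphism, it sends $q \leq p$ to $\Phi(q) \leq \Phi(p)$ in both directions, so the property ``$q \leq p$ and $q \leq p^\perp$ imply $q = 0$'' transfers from $p$ to $\Phi(p)$. By Lemma~\ref{ordersharpprop}, this means $\Phi$ sends idempotents to idempotents (and $\Phi^{-1}$ does likewise).

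Next I would check that orthogonality of idempotents is preserved. For idempotents $p, q \in E$, orthogonality $p * q = 0$ is equivalent to $p + q \leq 1$: one direction follows because $(p+q)^2 = p + q$ then makes $p+q$ an idempotent, hence $\leq 1$; conversely $p + q \leq 1$ gives $q \leq p^\perp$, so by Proposition~\ref{prop:EJAidempotent} we have $p^\perp * q = q$, i.e.\ $p * q = 0$. Since $\Phi$ is a unital order isomorphism and sends idempotents to idempotents, this characterization transfers across $\Phi$, so $\Phi(p) * \Phi(q) = 0$ whenever $p * q = 0$.

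With this in hand, I would use the spectral theorem: any $a \in E$ may be written as $a = \sum_i \lambda_i p_i$ with $p_i$ pairwise orthogonal idempotents. Then $a^2 = \sum_i \lambda_i^2 p_i$, while $\Phi(a) = \sum_i \lambda_i \Phi(p_i)$, and since the $\Phi(p_i)$ are again pairwise orthogonal idempotents we also get $\Phi(a)^2 = \sum_i \lambda_i^2 \Phi(p_i)$. Hence $\Phi(a^2) = \Phi(a)^2$. Finally, polarizing via
\begin{equation*}
    \Phi(a * b) \ =\  \tfrac{1}{2}\bigl( \Phi((a+b)^2) - \Phi(a^2) - \Phi(b^2) \bigr) \ =\  \tfrac{1}{2}\bigl( \Phi(a+b)^2 - \Phi(a)^2 - \Phi(b)^2 \bigr) \ =\  \Phi(a) * \Phi(b)
\end{equation*}
yields the desired Jordan identity.

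The only real obstacle is the first step, since ``idempotent'' is defined algebraically and must be captured order-theoretically for a purely order-theoretic map to preserve it; this is exactly what Lemma~\ref{ordersharpprop} accomplishes, and once idempotents and their orthogonality are known to be preserved, the spectral theorem does the rest.
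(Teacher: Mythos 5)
Your proof is correct and follows essentially the same route as the paper's: reduce to preservation of squares by polarization, use the order-sharpness characterization of idempotents (Lemma~\ref{ordersharpprop}) to see that $\Phi$ preserves idempotents, note that orthogonality of idempotents is order-theoretic ($p\leq q^\perp$), and finish with the spectral decomposition. You spell out the orthogonality equivalence in more detail than the paper does, but the argument is the same.
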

\begin{proof}
Let~$\Theta\colon E \to F$ be any unital order-isomorphism
    between EJAs.
    As~$2(a * b) = (a+b)^2 - a^2 - b^2$
        it suffices to show~$\Theta(a)^2= \Theta(a^2)$ for any~$a \in E$.
Write~$\sum_i \lambda_i p_i = a$ for the spectral decomposition of~$a$.
As idempotents are exactly the order-sharp elements by the previous lemma
    and idempotents~$p,q$ are orthogonal iff~$p \leq 1-q$,
    we see that~$\Theta(p_i)$ are also pairwise orthogonal idempotents.
    Thus~$\Theta(a)^2 = (\sum_i \lambda_i \Theta(p_i))^2 = \Theta(\sum_i \lambda_i^2 p_i) = \Theta(a^2)$,
        as desired.
\end{proof}
\begin{corollary}
	Let $\Theta:E\rightarrow F$ be any unital order-isomorphism between Euclidean
        Jordan Algebras.
    Then, for any~$a,b\in E$, we have~$\Theta(Q_ab) = Q_{\Theta(a)}\Theta(b)$.
    That is: $\Theta\circ Q_a = Q_{\Theta(a)}\circ \Theta$.
    Equivalently: $Q_a\circ \Theta = \Theta \circ Q_{\Theta^{-1}(a)}$.
\end{corollary}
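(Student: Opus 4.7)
The plan is to reduce the claim directly to the previous proposition, which gives that $\Theta$ preserves the Jordan product. Since the quadratic representation is defined purely in terms of the Jordan product via $Q_a b = 2a*(a*b) - (a*a)*b$, this should come down to a one-line calculation, with essentially no obstacle beyond bookkeeping.

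Concretely, I would first recall from the previous proposition that for the unital order-isomorphism $\Theta$ we have $\Theta(x*y) = \Theta(x)*\Theta(y)$ for all $x,y \in E$. Applying this twice and using the definition $Q_a = 2L(a)^2 - L(a^2)$ gives
\begin{align*}
\Theta(Q_a b) &= 2\,\Theta\bigl(a*(a*b)\bigr) - \Theta\bigl((a*a)*b\bigr) \\
&= 2\,\Theta(a)*\bigl(\Theta(a)*\Theta(b)\bigr) - \bigl(\Theta(a)*\Theta(a)\bigr)*\Theta(b) \\
&= Q_{\Theta(a)}\Theta(b),
\end{align*}
which is the identity $\Theta \circ Q_a = Q_{\Theta(a)} \circ \Theta$. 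For the equivalent reformulation, I would simply substitute $a \mapsto \Theta^{-1}(a)$ in the identity just proved, and then compose with $\Theta^{-1}$ on the left (noting that $\Theta^{-1}$ is itself a unital order-isomorphism, so the previous proposition applies to it too if needed), obtaining $Q_a \circ \Theta = \Theta \circ Q_{\Theta^{-1}(a)}$.

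The only thing to be careful about is that the previous proposition requires $\Theta$ to be a unital order-isomorphism between EJAs, which is exactly our hypothesis; nothing further is needed. There is no genuine obstacle here — the corollary is essentially a restatement of Jordan multiplicativity in the language of the quadratic representation.
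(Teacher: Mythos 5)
Your proof is correct and is exactly the argument the paper intends (the paper omits the proof of this corollary precisely because it is immediate from the preceding proposition that $\Theta$ preserves the Jordan product, combined with the definition $Q_a = 2L(a)^2 - L(a^2)$). Both the main identity and the reformulation via the substitution $a \mapsto \Theta^{-1}(a)$ are handled correctly.
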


The next few results involve the notion of \emph{faithfulness}. A
map $f:E\rightarrow F$ is called faithful when for any positive $a$
the equation $f(a)=0$ implies $a=0$. A map $f$ is faithful if and
only if $\im{f} = 1$.
\begin{lemma}
	Let $f:E\rightarrow E$ be a faithful pure $\diamond$-self-adjoint map between
    EJAs. Then $f=Q_{\sqrt{f(1)}}\circ \Theta$ for some unital Jordan isomorphism~$\Theta$
    with~$\Theta(\sqrt{f(1)})=\sqrt{f(1)}$ and $\Theta=\Theta^{-1}$.
\end{lemma}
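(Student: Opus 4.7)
The plan is a three-step attack: expand $f$ using its purity, trim off the corner part by faithfulness, and then extract the precise form of the remaining isomorphism from $\diamond$-self-adjointness.

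By purity and the classification of filters and corners up to isomorphism, we may write $f = \xi_{q} \after \Theta_0 \after \pi_p$ with~$q := f(1)$, $p \in E$ an idempotent, and~$\Theta_0 \colon \{E\lvert p\} \to E_q$ a unital Jordan isomorphism. Since $\pi_p(p^\perp) = Q_p(p^\perp) = 0$, faithfulness forces $p = 1$, so $\pi_p = \id$ and $f = Q_{\sqrt{q}} \after \iota \after \Theta_0$ with $\iota\colon E_q \hookrightarrow E$ the inclusion. The existence of a unital Jordan isomorphism $\Theta_0 \colon E \to E_1(\ceil{q})$ additionally forces~$\ceil{q} = 1$ (otherwise the Peirce components $E_{1/2}(\ceil{q})$ and $E_0(\ceil{q})$ are nontrivial, so $E_1(\ceil{q})$ is genuinely smaller than $E$). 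Hence $q$ is invertible, $E_q = E$, and $\Theta := \Theta_0\colon E \to E$ is a unital Jordan automorphism with~$f = Q_{\sqrt{q}} \after \Theta$.

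Apply $\diamond$-self-adjointness next. Since every unital Jordan isomorphism is orthogonal, $\Theta^* = \Theta^{-1}$, so~$f^* = \Theta^{-1}\after Q_{\sqrt{q}}$, and a short calculation gives~$f f^* = Q_{q}$ while~$f^* f = Q_{\Theta^{-1}(q)}$ by the corollary just above the lemma. The $\diamond$-self-adjointness of $f$ (equivalently, $\ceil{f(t)} = \ceil{f^*(t)}$ for every idempotent~$t$), combined with the polar decomposition theorem~\ref{theor:polardecomp} and the uniqueness of positive square roots, should yield $Q_q = Q_{\Theta^{-1}(q)}$, forcing $\Theta^{-1}(q) = q$; since Jordan isomorphisms preserve positive square roots, $\Theta(\sqrt{q}) = \sqrt{q}$ follows. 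Once this is in hand, $\Theta$ commutes with $Q_{\sqrt{q}}$; hence for any idempotent $t$, $\ceil{f(t)} = \Theta(\ceil{Q_{\sqrt{q}}(t)})$ and $\ceil{f^*(t)} = \Theta^{-1}(\ceil{Q_{\sqrt{q}}(t)})$. Equating them by $\diamond$-self-adjointness gives $\Theta^2(\ceil{Q_{\sqrt{q}}(t)}) = \ceil{Q_{\sqrt{q}}(t)}$ for every idempotent $t$. Since $Q_{\sqrt{q}}$ is an order automorphism, the idempotents of the form $\ceil{Q_{\sqrt{q}}(t)}$ exhaust $\Idem(E)$, so $\Theta^2$ fixes every idempotent; by the spectral theorem $\Theta^2 = \id$, i.e., $\Theta = \Theta^{-1}$.

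The main obstacle is the middle step: extracting from the combinatorial $\diamond$-self-adjointness condition the algebraic equality $Q_q = Q_{\Theta^{-1}(q)}$. This is the technical heart of the proof, requiring the polar decomposition theorem to reconcile the polar forms of $ff^*$ and $f^*f$, together with sharp control over how ceilings behave under the quadratic representation.
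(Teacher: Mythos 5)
Your reduction to $f = Q_{\sqrt{q}}\circ\Theta$ with $q=f(1)$ invertible is essentially the paper's first step, and your computations $ff^* = Q_q$ and $f^*f = Q_{\Theta^{-1}(q)}$ are correct. The genuine gap is exactly the step you flag yourself: deducing $Q_q = Q_{\Theta^{-1}(q)}$, i.e.\ $ff^* = f^*f$, from $\diamond$-self-adjointness. That condition only gives the possibilistic identity $\ceil{f(t)}=\ceil{f^*(t)}$ for idempotents $t$, equivalently $\ceil{Q_{\sqrt{q}}\Theta(a)} = \Theta^{-1}(\ceil{Q_{\sqrt{q}}a})$, which is far weaker than normality of $f$ as an operator; and neither the polar decomposition theorem (which factorizes $Q_aQ_b$ for two positive elements and says nothing comparing $ff^*$ with $f^*f$) nor uniqueness of positive square roots bridges that gap. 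The paper's argument here is spectral rather than polar: write $\sqrt{q}=\sum_i\lambda_i q_i$ with $q_i$ atomic, plug $a=q_i$ into the ceiling identity, use that $\ceil{b}$ is atomic iff $b$ is a positive multiple of an atom to get $(\Theta Q_{\sqrt{q}}\Theta)(q_i)=\mu_i q_i$, sum over $i$, and match the two resulting orthogonal decompositions to conclude $\Theta(\sqrt{q})=\sqrt{q}$. Some argument of this kind must be supplied; as written, the technical heart of the proof is missing.

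Two further remarks. First, your derivation of $\ceil{q}=1$ from the mere existence of an isomorphism $E\cong E_1(\ceil{q})$ needs an isomorphism invariant: a proper subalgebra being ``genuinely smaller'' as a set does not preclude an abstract isomorphism; rank works here (EJAs in this paper have finite rank, and $\mathrm{rank}\,E_1(p)<\mathrm{rank}\,E$ for $p\neq 1$), but the paper instead obtains this for free from $\ceil{f(1)}=f^\diamond(1)=f_\diamond(1)=\im f=1$, using $\diamond$-self-adjointness together with faithfulness. Second, your concluding step---once $\Theta$ commutes with $Q_{\sqrt{q}}$, show $\Theta^2$ fixes every idempotent of the form $\ceil{Q_{\sqrt{q}}(t)}$, note these exhaust $\Idem(E)$ because $Q_{\sqrt{q}}$ is an order automorphism and $\ceil{Q_{\sqrt{q}}\,b}=\ceil{Q_{\sqrt{q}}\ceil{b}}$, and conclude $\Theta^2=\id$ by the spectral theorem---is a clean alternative to the paper's route through invertibility of $(Q_{\sqrt{q}})^\diamond$ in $\mathbf{OMLatGal}$, but it remains conditional on the missing middle step.
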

\begin{proof}
    First, we collect some basic facts.
	As~$f$ is pure, we have~$f = \xi\circ \pi$ for
	some filter~$\xi$ and corner~$\pi$.
    Note
    that~$\im{f}=\im{\pi}$ as~$\xi$ is faithful and~$f(1)=\xi(1)$
	as $\pi$ is unital.
    Hence, by~$\diamond$-self-adjointness of~$f$,
	we have~$\ceil{\xi(1)} = \ceil{f(1)} = f^\diamond(1)=f_\diamond(1)
	    = \im{f} = \im{\pi}$.
        Next, by the universal properties
        of filters and corners,
        there exist order
        isomorphisms~$\Theta_1,\Theta_2$ such that~$\xi = \xi_{\xi(1)}\circ \Theta_1$ and
	$\pi = \Theta_2\circ\pi_{\im{\pi}}$, so that $f=\xi_{\xi(1)}\circ
	\Theta_1\circ \Theta_2 \circ \pi_{\im{\pi}} = \xi_{f(1)}\circ
	\Theta \circ \pi_{\im{f}}$ defining $\Theta :=
	\Theta_1\circ\Theta_2$.

    We assumed~$f$ is faithful, i.e.~$\im f = 1$.
    So~$\pi_{\im f} = \pi_1=\id$.
    For brevity, write~$q := \sqrt{f(1)}$.
    As~$\ceil{q}=\ceil{f(1)}=\ceil{\xi(1)} = \im \pi = 1$,
        we have~$\xi_q = Q_q$
        and so~$f = Q_q\after \Theta$.

	As seen in the proof of proposition~\ref{prop:diamond-adjointness} 
    if $f$ is
	$\diamond$-self-adjoint we have $\inn{f(a),b}=0 \iff
	\inn{a,f(b)}=0$. In this case this translates to
	$0=\inn{Q_q\Theta(a),b}=0 \iff 0=\inn{a,Q_q\Theta(b)} =
	\inn{Q_qa,\Theta(b)} = \inn{\Theta^{-1}Q_qa,b}$. This implies
	that $\ceil{Q_q\Theta(a)} = \ceil{\Theta^{-1}Q_qa} =
	\Theta^{-1}(\ceil{Q_qa})$ for all $a$. Write $q=\sum_i\lambda_i
	q_i$ where the $q_i$ are atomic. Then we have $Q_qq_i =
	\lambda_i^2 q_i$. Filling in $a=q_i$ we then get
	$\ceil{Q_q\Theta(q_i)} = \Theta^{-1}(\ceil{Q_qq_i}) =
	\Theta^{-1}(\ceil{\lambda_i^2 q_i}) = \Theta^{-1}(q_i)$.
	The right-hand side is atomic as Jordan isomorphisms preserve
	atomicity, so the left-hand side must also be atomic. Since
	$\ceil{b}$ is atomic if and only if $b$ is proportional to
	an atomic predicate we then get $Q_q\Theta(q_i) = \mu_i
	\Theta^{-1}(q_i)$ for some $0<\mu_i<1$. By composing with $\Theta$ this becomes $(\Theta Q_q \Theta)(q_i) = \mu_i q_i$. Now we note that:
    \begin{equation*}
	\sum_i\lambda_i^2 \Theta(q_i) \ = \  \Theta(q^2) \ = \
	\Theta Q_q \Theta(1) \ = \  \sum_i \Theta Q_q \Theta (q_i)
	\ = \  \sum_i \mu_i q_i
    \end{equation*}
	Now let $p_j = \sum_{i, \lambda_i=\lambda_j} q_i$ and $r_j = \sum_{i, \mu_i=\mu_j} q_j$. 
    Then we can write $\sum_i \lambda_i^2 \Theta(q_i) = \sum_j \lambda_j^2 \Theta(p_j)$ and $\sum_i \mu_i q_i = \sum_j \mu_j r_j$ 
    where in the sums on the right-hand side each of the $\lambda_j$ and each of the $\mu_j$ is distinct. 
    Since $\Theta$ preserves orthogonality this means we get two orthogonal decompositions that are equal: $\sum_j \lambda_j^2 \Theta(p_j) = \sum_j \mu_j r_j$. 
    By uniqueness of such decompositions we then have $\lambda_j^2 =\mu_j$ and $\Theta(p_j) = r_j$ (where we assume for now that we have ordered the eigenvalues from high to low). 
    But of course since the $\lambda_j^2$ and $\mu_j$ agree, the $p_j$ and the $r_j$ will also agree by their definition, so that $\Theta(p_j) = p_j$. Finally, we get $\Theta(q) = \sum_j \lambda_j \Theta(p_j) = \sum_j\lambda_j p_j = q$.

	Now $\Theta Q_q =  Q_{\Theta(q)}\Theta = Q_q\Theta$ so the
	$\Theta$ commutes with $Q_q$. Note that since $\ceil{q}=1$,
	$q$ will be invertible. Let $g = Q_q$. Then $g^\diamond
	\Theta^\diamond = (\Theta g)^\diamond = (g\Theta)^\diamond
	= f^\diamond = f_\diamond = (g\Theta)_\diamond = g_\diamond
	\Theta_\diamond = g^\diamond (\Theta^{-1})^\diamond$. Now
	$g^{-1}$ is not a subunital map, but it can be scaled
	downwards until it is, in which case $gg^{-1} = \lambda
	\id$ for some $\lambda>0$, in which case $g^\diamond
	(g^{-1})^\diamond = \id$. Since $g^\diamond$ has an inverse,
	we see that $g^\diamond \Theta^\diamond = g^\diamond
	(\Theta^{-1})^\diamond$ can only hold when $\Theta^\diamond
    = (\Theta^{-1})^\diamond$. From this and $\Theta(\ceil{a}) = \ceil{\Theta(a)}$
        it follows that~$\Theta=\Theta^{-1}$.
\end{proof}

\begin{proposition}
	Let $g\colon E\rightarrow E$ be a faithful pure $\diamond$-positive map and let $p:=g(1)$, then $g = Q_{\sqrt{p}}$.
\end{proposition}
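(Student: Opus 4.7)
The strategy is to apply the previous lemma twice: once to $g$, and once to a $\diamond$-self-adjoint square root $h$ of $g$.

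First I show $g$ is itself $\diamond$-self-adjoint. The hypothesis of $\diamond$-positivity gives $g = h\after h$ for some $\diamond$-self-adjoint $h\in \EJA_{psu}(E,E)$, so using the (contra- and co-variant) functoriality of the $\diamond$-operations,
\[
g^\diamond \;=\; h^\diamond\after h^\diamond \;=\; h_\diamond\after h_\diamond \;=\; g_\diamond.
\]
Since $g$ is now pure, faithful and $\diamond$-self-adjoint, the previous lemma applies and yields
\[
g \;=\; Q_{\sqrt p}\after\Theta
\]
for some unital Jordan isomorphism $\Theta$ with $\Theta(\sqrt p) = \sqrt p$ and $\Theta^2 = \id$. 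The proof will be complete once we show $\Theta = \id$.

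To this end I turn to $h$. It is positive and $\diamond$-self-adjoint by assumption, and inherits faithfulness from $g$: if $h(a)=0$ with $a\geq 0$, then $g(a)=h(h(a))=0$ and so $a=0$. The crucial further step is to establish that $h$ is pure. Granting this, a second application of the previous lemma provides
\[
h \;=\; Q_{\sqrt{h(1)}}\after\Psi
\]
for a unital Jordan isomorphism $\Psi$ with $\Psi(\sqrt{h(1)})=\sqrt{h(1)}$ and $\Psi^2=\id$. Because $\Psi$ fixes $\sqrt{h(1)}$, the earlier corollary on Jordan isomorphisms and quadratic representations gives $\Psi\after Q_{\sqrt{h(1)}} = Q_{\sqrt{h(1)}}\after \Psi$, whence
\[
g \;=\; h\after h \;=\; Q_{\sqrt{h(1)}}\after\Psi\after Q_{\sqrt{h(1)}}\after\Psi \;=\; Q_{\sqrt{h(1)}}^{\,2}\after\Psi^2 \;=\; Q_{h(1)}.
\]
Evaluating at the unit: $p=g(1)=Q_{h(1)}(1)=h(1)^2$, so $h(1)=\sqrt p$ and hence $g=Q_{h(1)}=Q_{\sqrt p}$, as required.

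The main obstacle is therefore to show that $h$ is pure. The plan is to use that faithfulness of $h$ forces $\ceil{h(1)}=1$ (if not, $h^*$ would annihilate the non-zero positive element $1-\ceil{h(1)}$, and $\diamond$-self-adjointness together with self-duality converts this into a contradiction with faithfulness of $h$). So the standard filter $\xi_{h(1)} = Q_{\sqrt{h(1)}}$ is an order isomorphism $E\to E$, and the universal property of the filter applied to $h\colon E\to E$ (trivially satisfying $h(1)\leq h(1)$) factors $h = \xi_{h(1)}\after \bar h$ for a unique unital $\bar h\colon E\to E$. Using once more the $\diamond$-self-adjointness of $h$ (and of $\xi_{h(1)}$) one then shows that $\bar h$ is a unital order isomorphism, hence a Jordan isomorphism by the earlier proposition. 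A filter composed with an isomorphism is pure, which completes the argument.
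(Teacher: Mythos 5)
Your overall computation is the same as the paper's: write the square root as $Q_{\sqrt{h(1)}}\after\Psi$ with $\Psi$ a Jordan involution fixing $\sqrt{h(1)}$, commute $\Psi$ past $Q_{\sqrt{h(1)}}$, and conclude $g=Q_{\sqrt{h(1)}}^2\Psi^2=Q_{h(1)}=Q_{\sqrt p}$. The difference is in how the hypotheses are read. The paper takes ``pure $\diamond$-positive'' to mean that $g=f\after f$ for a \emph{pure} $\diamond$-self-adjoint $f$ (this is the reading used consistently in the proof of Theorem~\ref{super-duper-theorem} as well, where the purity of $\cl f$ is derived from the purity of $f$), so the purity of the square root is part of the hypothesis and the lemma applies to $f$ immediately. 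You instead assume only that $h$ is $\diamond$-self-adjoint and try to \emph{derive} that $h$ is pure; that derivation is where your proof breaks.

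Concretely, the step ``one then shows that $\bar h$ is a unital order isomorphism'' is false for a general faithful $\diamond$-self-adjoint $h$, and nothing in your sketch invokes the purity of $g$ to repair it. Take $E=\R\oplus\R$ and $h(a,b)=(\frac{a+b}{2},\frac{a+b}{2})$. This $h$ is positive, unital, faithful, and self-adjoint for the trace inner product, hence $\diamond$-self-adjoint by Proposition~\ref{prop:diamond-adjointness}; here $h(1)=1$, so $\xi_{h(1)}=\mathrm{id}$ and $\bar h=h$, which is not even injective, and $h$ is not pure (a faithful unital pure endomap of $\R\oplus\R$ must be a Jordan isomorphism). So your argument, which uses only faithfulness and $\diamond$-self-adjointness of $h$, would ``prove'' that this averaging map is pure. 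If you want to keep your weaker reading of the hypothesis you would need a genuinely new argument showing that purity of $g=h\after h$ forces purity of $h$ (or otherwise forces $\Theta=\id$ in your first decomposition $g=Q_{\sqrt p}\after\Theta$); as written, the proof has a gap exactly at that point. Your preliminary step showing $g$ itself is $\diamond$-self-adjoint is correct but ends up unused, since the second half of your argument would already yield $g=Q_{h(1)}$ directly.
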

\begin{proof}
	Since $g$ is $\diamond$-positive there must exist some pure $\diamond$-self-adjoint $f:E\rightarrow E$ such that $g = ff$. Since $g$ is faithful, the $f$ must be faithful as well since $1=\im{g} = \im{ff} \leq \im{f}$. By the previous lemma $f=Q_q\Theta$ for some $q$ and $\Theta(q) = q$ and $\Theta=\Theta^{-1}$. But then $g = ff = Q_q\Theta Q_q \Theta = Q_qQ_{\Theta(q)}\Theta\Theta = Q_qQ_q\Theta^{-1}\Theta = Q_{q^2}$. Now $g(1) = Q_{q^2}1 = q^4$, so that $g= Q_{\sqrt{p}}$ where $p=q^4$.
\end{proof}

\begin{proof}[Proof of theorem~\ref{super-duper-theorem}]
	Let $f$ be $\diamond$-self-adjoint so that in particular $\im{f} = \ceil{f(1)}$. We can then corestrict $f$ to $E\rightarrow E_1(\im{f})$. Using corollary \ref{cor:factorimage} we also get $f=fQ_{\im{f}}$ so that we can factor $f$ as $f = \xi_{\im{f}}\circ \cl{f}\circ \pi_{\im{f}}$ where $\cl{f}:E_1(\im{f})\rightarrow E_1(\im{f})$. Note that $\xi_{\im{f}}$ is nothing but the inclusion map into $E$. It is also easy to see that $\cl{f} = \pi_{\im{f}}\circ f \circ \xi_{\im{f}}$. Then $\cl{f}(1) = \pi_{\im{f}}(f(\im{f})) = f(\im{f})=f(1)$. Because $f$ is pure, $\cl{f}$ is also pure as it is a composition of pure maps. When $f$ is $\diamond$-self-adjoint we have $\cl{f}^\diamond = (\pi_{\im{f}}\circ f\circ \xi_{\im{f}})^\diamond = \xi_{\im{f}}^\diamond \circ f^\diamond \circ \pi_{\im{f}}^\diamond = (\pi_{\im{f}})_\diamond \circ f_\diamond \circ (\xi_{\im{f}})_\diamond = \cl{f}_\diamond$. Here we have used that $\xi_s^\diamond(t) = \ceil{\xi_s(t)} = \ceil{t} = t$ since $t\in E_1(s)$ and $(\pi_s)_\diamond(t) = \im{\pi_t\circ \pi_s} = \im{\pi_t} = t$ which also follows because $t\leq s$, so that $\xi_s^\diamond = (\pi_s)_\diamond$.

	When $f$ is $\diamond$-self-adjoint we get $\im{f^2} = f^\diamond(f_\diamond(1)) = f^\diamond(\im{f}) = f^\diamond(1) = f_\diamond(1)=\im{f}$. For a $\diamond$-self-adjoint $f$ we then get $\cl{f}^2 = \pi_{\im{f}}\circ f \circ \xi_{\im{f}}\circ \pi_{\im{f}}\circ f \circ \xi_{\im{f}} = \pi_{\im{f}}\circ f^2 \circ \xi_{\im{f}} = \cl{f^2}$. We conclude that when $g=f\circ f$ is $\diamond$-positive, $\cl{g}=\cl{f^2}=\cl{f}^2$ is also $\diamond$-positive. Since $\cl{g}$ is faithful we have already established that $\cl{g} = Q_{\sqrt{p}}$ where $p = \cl{g}(1)=g(1)$ and $\im{g} = \ceil{g(1)} = \ceil{p}$. Now $g=\xi_{\im{g}} Q_{\sqrt{p}} \pi_{\im{g}} = Q_{\sqrt{p}}Q_{\ceil{p}} = Q_{\sqrt{p}}$.
\end{proof}


We're now ready to conclude that
    $\EJA_{psu}^{\opp}$
    is a~$\dagger$-effectus.
The details of $\dagger$-effectuses are beyond the scope we chose for this paper;
for those we refer the reader to~\cite[\S173--215]{basthesis}.
\begin{theorem}
    $\EJA_{psu}^{\opp}$ is a $\dagger$-effectus (as defined in \cite[\S215]{basthesis}).
\end{theorem}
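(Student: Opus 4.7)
The plan is to verify each clause of the $\dagger$-effectus definition in \cite[\S215]{basthesis} using the results of this paper, starting from the purely effectus-theoretic axioms and working up to the dagger structure. First I would show that $\EJA_{psu}^{\opp}$ is an effectus in partial form: finite coproducts in $\EJA_{psu}^{\opp}$ are realized by direct sums of EJAs, the zero map exists, and the effect monoid structure on predicates $\R \to E$ (equivalently, effects $0 \leq a \leq 1$ in $E$) is inherited from the order-unit structure granted by Theorem~\ref{theor:chu}. The partial addition of parallel maps and the diagrammatic effectus axioms can then be verified using self-duality and the fact that states separate the maps.

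Next I would invoke Propositions~\ref{eja:quot} and~\ref{eja:filter} to conclude that $\EJA_{psu}^{\opp}$ has quotients and comprehensions in the effectus-theoretic sense, namely the standard corners and filters, so that the class of pure maps in the effectus sense coincides with Definition~\ref{def:purity}. For the dagger I would verify the two crucial closure properties: the composition of pure maps is pure by Corollary~\ref{cor:purepure}, and the Hilbert space adjoint of a pure map is pure by a short case analysis, observing that the adjoint of $\xi_q = Q_{\sqrt{q}}\after \iota$ is $\iota^*\after Q_{\sqrt{q}}$, which is (up to isomorphism) a standard corner composed with $Q_{\sqrt{q}}$, and symmetrically that the adjoint of a standard corner is a standard filter. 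Involutivity and contravariance of $(-)^*$ then equip the pure maps with a dagger.

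Finally I would match this concrete dagger to the axiomatic one from \cite[\S215]{basthesis} and check the compatibility conditions between the dagger, the comprehensions, and the $\diamond$-structure. Proposition~\ref{prop:diamond-adjointness} is useful here: the Hilbert adjoint implements $\diamond$-adjointness on self-adjoint operators, and combined with Theorem~\ref{super-duper-theorem} this guarantees that the $\dagger$-positive pure maps in the abstract sense coincide with the maps $Q_{\sqrt{a}}$ for positive~$a$, as required. The main obstacle is not any single step, all of which are either already proved or routine, but the bureaucratic task of verifying every clause of the $\dagger$-effectus definition one by one; as the authors indicate, the detailed line-by-line check is deferred to \cite{basthesis}, with each axiom traced to the corresponding proposition of this paper.
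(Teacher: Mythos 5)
Your outline reaches the same destination by a genuinely different route at the decisive step, and it is under-specified exactly where the paper's proof does its real work. The paper climbs the ladder of definitions from \cite{basthesis}: effectus $\to$ $\diamond$-effectus (existence of corners, filters, images, and complements of idempotents) $\to$ $\&$-effectus (this is where Theorem~\ref{super-duper-theorem} and Corollary~\ref{cor:purepure} are consumed) $\to$ $\dagger$-effectus, with the final step reduced to exactly three concrete conditions from \cite[\S215III]{basthesis}: every effect has a unique square root under $\&$ (the spectral theorem, Corollary~\ref{cor:spectral}), the identity $Q_{Q_{\sqrt{p}}q}=Q_{\sqrt{p}}Q_qQ_{\sqrt{p}}$ (the fundamental equality), and the fact that the filter of an idempotent maps idempotents to idempotents (clear for the standard filter, being an inclusion). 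You instead propose to exhibit the dagger externally as the Hilbert-space adjoint and then ``match'' it to the axiomatic one. That is legitimate in spirit, but in \cite{basthesis} the dagger of a $\dagger$-effectus is not extra structure one is free to supply: it is constructed internally from the assert maps of the $\&$-effectus, so the content of the verification is showing that this internal construction goes through. In particular, the uniqueness half of Theorem~\ref{super-duper-theorem} (a pure $\diamond$-positive map is determined by its value at $1$) is needed already to establish the $\&$-effectus structure, not merely as an after-the-fact consistency check on $\dagger$-positive maps; and the three closing conditions of \S215III, which are the only nontrivial content of the last step, appear in your plan only implicitly under ``check the compatibility conditions.'' Your observation that the adjoint of the standard filter $Q_{\sqrt{q}}\after\iota$ is $\iota^*\after Q_{\sqrt{q}}$ and hence pure is correct and matches the paper's own (unproved) remark that pure maps are closed under adjoints, so nothing in your sketch is wrong; but to count as a proof of this theorem you would still need to isolate and verify the square-root, fundamental-equality, and idempotent-preservation clauses, which is precisely the substance of the paper's argument.
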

\begin{proof}
It is straightforward to show that~$\EJA_{psu}^\opp$
    is an effectus, see eg.~\cite[\S191]{basthesis}.
    The existence of corners (prop.~\ref{eja:quot}),
    filters (prop.~\ref{eja:filter}), images~(prop.~\ref{eja:im})
    and the fact that the complement of an idempotent is again idempotent,
    shows that~$\EJA_{psu}^\opp$ is a~$\diamond$-effectus.~\cite[\S206]{basthesis}
This, combined with  the characterization of~$\diamond$-positive maps (thm.~\ref{super-duper-theorem})
    and the fact that pure maps are closed under composition (cor.~\ref{cor:purepure}),
    gives us that~$\EJA_{psu}^\opp$ is a~$\&$-effectus.~\cite[\S211]{basthesis}
To see, finally, that we have a~$\dagger$-effectus at hand,
    it is sufficient to show the three conditions from \cite[\S215III]{basthesis}.
\begin{enumerate}
\item
    Every predicate~$0 \leq p\leq 1$ must have a unique square root:
	 there must exist a unique $0 \leq q\leq 1$
	such that $q\& q = p$. This is true by the spectral theorem
        (corollary~\ref{cor:spectral}).
\item
    We must have
	$Q(\sqrt{Q_{\sqrt{p}}q})^2 = Q_{\sqrt{p}}Q(\sqrt{q})^2
	Q_{\sqrt{p}}$.
        Rewritten to $Q(Q_{\sqrt{p}}q) =
	Q_{\sqrt{p}}Q_qQ_{\sqrt{p}}$, this becomes the familiar
        fundamental equality for the quadratic representation.
\item
	The filter of an idempotent must map idempotents to idempotents.
        This is clearly true for the standard filter
            of an idempotent (being an inclusion) and
            hence for any as isomorphisms preserve idempotents as well. \qedhere
\end{enumerate}
\end{proof}

\section{Conclusion}
We have shown that the definition of purity for von Neumann algebras~\cite{bramthesis,westerbaan2016paschke,westerbaan2016universal} and effectus theory~\cite{cho2015introduction,cho2015quotient} also works well in the category of positive contractive linear maps between Euclidean Jordan algebras. In particular, this definition of purity is closed under composition and the dagger, and includes the equivalents of the L\"uders maps for Jordan algebras. We have also shown that the possibilistic notion of $\diamond$-adjointness from effectus theory translates to these algebras, and we have used it to give a new characterization of the `sequential product' maps $b\mapsto Q_ab$ for positive $a$.

With the results in this paper we know that the filter-corner definition of purity works well in von~Neumann algebras and in Euclidean Jordan algebras. 
This begs the question whether our results concerning pure maps generalize
to the class of~\emph{JBW-algebras} \cite{hanche1984jordan},
which include all von~Neumann algebras and EJAs.
We consider this to be a challenging topic for future research.

The precise relationship between the different notions of purity found in the literature and the one studied in this paper is yet to be determined. When restricted to complex matrix algebras all the definitions coincide, but when direct sums of simple algebras are considered the definitions sometimes diverge. For example, the identity map on a direct sum is not atomic and therefore not pure in the sense of \cite{chiribella2011informational}. Also, the adjoint of a pure map in the sense of \cite{cunningham2017purity,selby2018reconstructing} on a direct sum need not be pure.

Then finally there is the matter of the different notions of positivity for
maps between Euclidean Jordan algebras: they can be
\emph{superoperator positive} (mapping positive elements to positive elements),
\emph{operator positive} as linear maps between Hilbert spaces,
and, of course, $\diamond$-positive. 
Any relation? 
While preliminary investigations reveal
none between superoperator positivity 
and operator positivity, 
$\diamond$-positivity does seem to be connected with operator positivity.

\textbf{Acknowledgments}: This work is supported by the ERC under
the European Union's Seventh Framework Programme (FP7/2007-2013) /
ERC grant n$^\text{o}$ 320571.

\bibliographystyle{eptcs}
\bibliography{main}

\begin{thebibliography}{10}
\providecommand{\bibitemdeclare}[2]{}
\providecommand{\surnamestart}{}
\providecommand{\surnameend}{}
\providecommand{\urlprefix}{Available at }
\providecommand{\url}[1]{\texttt{#1}}
\providecommand{\href}[2]{\texttt{#2}}
\providecommand{\urlalt}[2]{\href{#1}{#2}}
\providecommand{\doi}[1]{doi:\urlalt{http://dx.doi.org/#1}{#1}}
\providecommand{\arxiv}[1]{\urlalt{http://arxiv.org/abs/#1}{#1}}
\providecommand{\bibinfo}[2]{#2}

\bibitemdeclare{book}{alfsen2012geometry}
\bibitem{alfsen2012geometry}
\bibinfo{author}{Erik~M \surnamestart Alfsen\surnameend} \&
  \bibinfo{author}{Frederic~W \surnamestart Shultz\surnameend}
  (\bibinfo{year}{2012}): \emph{\bibinfo{title}{Geometry of state spaces of
  operator algebras}}.
\newblock \bibinfo{publisher}{Springer Science \& Business Media}.

\bibitemdeclare{article}{barnum2016composites}
\bibitem{barnum2016composites}
\bibinfo{author}{Howard \surnamestart Barnum\surnameend},
  \bibinfo{author}{Matthew \surnamestart Graydon\surnameend} \&
  \bibinfo{author}{Alexander \surnamestart Wilce\surnameend}
  (\bibinfo{year}{2016}): \emph{\bibinfo{title}{Composites and categories of
  euclidean Jordan algebras}}.
\newblock {\sl \bibinfo{journal}{arXiv preprint arXiv:\arxiv{1606.09331}}}.

\bibitemdeclare{article}{barnum2014higher}
\bibitem{barnum2014higher}
\bibinfo{author}{Howard \surnamestart Barnum\surnameend},
  \bibinfo{author}{Markus~P \surnamestart M{\"u}ller\surnameend} \&
  \bibinfo{author}{Cozmin \surnamestart Ududec\surnameend}
  (\bibinfo{year}{2014}): \emph{\bibinfo{title}{Higher-order interference and
  single-system postulates characterizing quantum theory}}.
\newblock {\sl \bibinfo{journal}{New Journal of Physics}}
  \bibinfo{volume}{16}(\bibinfo{number}{12}), p. \bibinfo{pages}{123029},
  \doi{10.1088/1367-2630/16/12/123029}.

\bibitemdeclare{article}{barrett2007information}
\bibitem{barrett2007information}
\bibinfo{author}{Jonathan \surnamestart Barrett\surnameend}
  (\bibinfo{year}{2007}): \emph{\bibinfo{title}{Information processing in
  generalized probabilistic theories}}.
\newblock {\sl \bibinfo{journal}{Physical Review A}}
  \bibinfo{volume}{75}(\bibinfo{number}{3}), p. \bibinfo{pages}{032304},
  \doi{10.1103/PhysRevA.75.032304}.

\bibitemdeclare{article}{brandao2013resource}
\bibitem{brandao2013resource}
\bibinfo{author}{Fernando~GSL \surnamestart Brandao\surnameend},
  \bibinfo{author}{Micha{\l} \surnamestart Horodecki\surnameend},
  \bibinfo{author}{Jonathan \surnamestart Oppenheim\surnameend},
  \bibinfo{author}{Joseph~M \surnamestart Renes\surnameend} \&
  \bibinfo{author}{Robert~W \surnamestart Spekkens\surnameend}
  (\bibinfo{year}{2013}): \emph{\bibinfo{title}{Resource theory of quantum
  states out of thermal equilibrium}}.
\newblock {\sl \bibinfo{journal}{Physical Review Letters}}
  \bibinfo{volume}{111}(\bibinfo{number}{25}), p. \bibinfo{pages}{250404},
  \doi{10.1103/PhysRevLett.111.250404}.

\bibitemdeclare{article}{chiribella2010probabilistic}
\bibitem{chiribella2010probabilistic}
\bibinfo{author}{Giulio \surnamestart Chiribella\surnameend},
  \bibinfo{author}{Giacomo~Mauro \surnamestart D'Ariano\surnameend} \&
  \bibinfo{author}{Paolo \surnamestart Perinotti\surnameend}
  (\bibinfo{year}{2010}): \emph{\bibinfo{title}{Probabilistic theories with
  purification}}.
\newblock {\sl \bibinfo{journal}{Physical Review A}}
  \bibinfo{volume}{81}(\bibinfo{number}{6}), p. \bibinfo{pages}{062348},
  \doi{10.1103/PhysRevA.81.062348}.

\bibitemdeclare{article}{chiribella2011informational}
\bibitem{chiribella2011informational}
\bibinfo{author}{Giulio \surnamestart Chiribella\surnameend},
  \bibinfo{author}{Giacomo~Mauro \surnamestart D'Ariano\surnameend} \&
  \bibinfo{author}{Paolo \surnamestart Perinotti\surnameend}
  (\bibinfo{year}{2011}): \emph{\bibinfo{title}{Informational derivation of
  quantum theory}}.
\newblock {\sl \bibinfo{journal}{Physical Review A}}
  \bibinfo{volume}{84}(\bibinfo{number}{1}), p. \bibinfo{pages}{012311},
  \doi{10.1103/PhysRevA.84.012311}.

\bibitemdeclare{article}{cho2015introduction}
\bibitem{cho2015introduction}
\bibinfo{author}{Kenta \surnamestart Cho\surnameend}, \bibinfo{author}{Bart
  \surnamestart Jacobs\surnameend}, \bibinfo{author}{Bas \surnamestart
  Westerbaan\surnameend} \& \bibinfo{author}{Abraham \surnamestart
  Westerbaan\surnameend} (\bibinfo{year}{2015}): \emph{\bibinfo{title}{An
  introduction to effectus theory}}.
\newblock {\sl \bibinfo{journal}{arXiv preprint arXiv:\arxiv{1512.05813}}}.

\bibitemdeclare{inproceedings}{cho2015quotient}
\bibitem{cho2015quotient}
\bibinfo{author}{Kenta \surnamestart Cho\surnameend}, \bibinfo{author}{Bart
  \surnamestart Jacobs\surnameend}, \bibinfo{author}{Bas \surnamestart
  Westerbaan\surnameend} \& \bibinfo{author}{Bram \surnamestart
  Westerbaan\surnameend} (\bibinfo{year}{2015}):
  \emph{\bibinfo{title}{Quotient-Comprehension Chains}}.
\newblock In \bibinfo{editor}{Chris \surnamestart Heunen\surnameend},
  \bibinfo{editor}{Peter \surnamestart Selinger\surnameend} \&
  \bibinfo{editor}{Jamie \surnamestart Vicary\surnameend}, editors: {\sl
  \bibinfo{booktitle}{{\rm Proceedings of the 12th International Workshop on}
  Quantum Physics and Logic, {\rm Oxford, U.K., July 15-17, 2015}}}, {\sl
  \bibinfo{series}{Electronic Proceedings in Theoretical Computer Science}}
  \bibinfo{volume}{195}, \bibinfo{publisher}{Open Publishing Association}, pp.
  \bibinfo{pages}{136--147}, \doi{10.4204/EPTCS.195.10}.

\bibitemdeclare{book}{chu2011jordan}
\bibitem{chu2011jordan}
\bibinfo{author}{Cho-Ho \surnamestart Chu\surnameend} (\bibinfo{year}{2011}):
  \emph{\bibinfo{title}{Jordan structures in geometry and analysis}}.
\newblock \bibinfo{volume}{190}, \bibinfo{publisher}{Cambridge University
  Press}, \doi{10.1017/CBO9781139060165}.

\bibitemdeclare{article}{chu2017infinite}
\bibitem{chu2017infinite}
\bibinfo{author}{Cho-Ho \surnamestart Chu\surnameend} (\bibinfo{year}{2017}):
  \emph{\bibinfo{title}{Infinite dimensional Jordan algebras and symmetric
  cones}}.
\newblock {\sl \bibinfo{journal}{Journal of Algebra}} \bibinfo{volume}{491},
  pp. \bibinfo{pages}{357--371}, \doi{10.1016/j.jalgebra.2017.08.017}.

\bibitemdeclare{inproceedings}{cunningham2017purity}
\bibitem{cunningham2017purity}
\bibinfo{author}{Oscar \surnamestart Cunningham\surnameend} \&
  \bibinfo{author}{Chris \surnamestart Heunen\surnameend}
  (\bibinfo{year}{2018}): \emph{\bibinfo{title}{Purity through Factorisation}}.
\newblock In \bibinfo{editor}{Bob \surnamestart Coecke\surnameend} \&
  \bibinfo{editor}{Aleks \surnamestart Kissinger\surnameend}, editors: {\sl
  \bibinfo{booktitle}{{\rm Proceedings 14th International Conference on}
  Quantum Physics and Logic, {\rm Nijmegen, The Netherlands, 3-7 July 2017}}},
  {\sl \bibinfo{series}{Electronic Proceedings in Theoretical Computer
  Science}} \bibinfo{volume}{266}, \bibinfo{publisher}{Open Publishing
  Association}, pp. \bibinfo{pages}{315--328}, \doi{10.4204/EPTCS.266.20}.

\bibitemdeclare{article}{devetak2005distillation}
\bibitem{devetak2005distillation}
\bibinfo{author}{Igor \surnamestart Devetak\surnameend} (\bibinfo{year}{2005}):
  \emph{\bibinfo{title}{Distillation of local purity from quantum states}}.
\newblock {\sl \bibinfo{journal}{Physical Review A}}
  \bibinfo{volume}{71}(\bibinfo{number}{6}), p. \bibinfo{pages}{062303},
  \doi{10.1103/PhysRevA.71.062303}.

\bibitemdeclare{book}{faraut1994analysis}
\bibitem{faraut1994analysis}
\bibinfo{author}{Jacques \surnamestart Faraut\surnameend} \&
  \bibinfo{author}{Adam \surnamestart Kor{\'a}nyi\surnameend}
  (\bibinfo{year}{1994}): \emph{\bibinfo{title}{Analysis on symmetric cones}}.
\newblock \bibinfo{publisher}{Clarendon Press Oxford}.

\bibitemdeclare{article}{gudder2008characterization}
\bibitem{gudder2008characterization}
\bibinfo{author}{Stan \surnamestart Gudder\surnameend} \&
  \bibinfo{author}{Fr{\'e}d{\'e}ric \surnamestart
  Latr{\'e}moli{\`e}re\surnameend} (\bibinfo{year}{2008}):
  \emph{\bibinfo{title}{Characterization of the sequential product on quantum
  effects}}.
\newblock {\sl \bibinfo{journal}{Journal of Mathematical Physics}}
  \bibinfo{volume}{49}(\bibinfo{number}{5}), p. \bibinfo{pages}{052106},
  \doi{10.1063/1.2904475}.

\bibitemdeclare{book}{hanche1984jordan}
\bibitem{hanche1984jordan}
\bibinfo{author}{Harald \surnamestart Hanche-Olsen\surnameend} \&
  \bibinfo{author}{Erling \surnamestart St{\o}rmer\surnameend}
  (\bibinfo{year}{1984}): \emph{\bibinfo{title}{Jordan operator algebras}}.
\newblock \bibinfo{volume}{21}, \bibinfo{publisher}{Pitman Advanced Pub.
  Program}.

\bibitemdeclare{book}{jordan1933}
\bibitem{jordan1933}
\bibinfo{author}{Pascual \surnamestart Jordan\surnameend}
  (\bibinfo{year}{1933}): \emph{\bibinfo{title}{{\"U}ber
  Verallgemeinerungsm{\"o}glichkeiten des Formalismus der Quantenmechanik}}.
\newblock \bibinfo{publisher}{Weidmann}.

\bibitemdeclare{incollection}{jordan1993algebraic}
\bibitem{jordan1993algebraic}
\bibinfo{author}{Pascual \surnamestart Jordan\surnameend},
  \bibinfo{author}{John \surnamestart von Neumann\surnameend} \&
  \bibinfo{author}{Eugene~P \surnamestart Wigner\surnameend}
  (\bibinfo{year}{1993}): \emph{\bibinfo{title}{On an algebraic generalization
  of the quantum mechanical formalism}}.
\newblock In: {\sl \bibinfo{booktitle}{The Collected Works of Eugene Paul
  Wigner}}, \bibinfo{publisher}{Springer}, pp. \bibinfo{pages}{298--333},
  \doi{10.1007/978-3-662-02781-3\_21}.

\bibitemdeclare{book}{kadison1951representation}
\bibitem{kadison1951representation}
\bibinfo{author}{Richard~V \surnamestart Kadison\surnameend}
  (\bibinfo{year}{1951}): \emph{\bibinfo{title}{A representation theory for
  commutative topological algebra}}.
\newblock \bibinfo{volume}{7}, \bibinfo{publisher}{American Mathematical
  Society}, \doi{10.1090/memo/0007}.

\bibitemdeclare{book}{kadison2015fundamentals}
\bibitem{kadison2015fundamentals}
\bibinfo{author}{Richard~V \surnamestart Kadison\surnameend} \&
  \bibinfo{author}{John~R \surnamestart Ringrose\surnameend}
  (\bibinfo{year}{2015}): \emph{\bibinfo{title}{Fundamentals of the Theory of
  Operator Algebras Volume II: Advanced Theory}}.
\newblock \bibinfo{volume}{2}, \bibinfo{publisher}{American Mathematical Soc.}

\bibitemdeclare{article}{koecher1957positivitatsbereiche}
\bibitem{koecher1957positivitatsbereiche}
\bibinfo{author}{Max \surnamestart Koecher\surnameend} (\bibinfo{year}{1957}):
  \emph{\bibinfo{title}{Positivit\"atsbereiche im $R^n$}}.
\newblock {\sl \bibinfo{journal}{American Journal of Mathematics}}, pp.
  \bibinfo{pages}{575--596}, \doi{10.2307/2372563}.

\bibitemdeclare{article}{masanes2014entanglement}
\bibitem{masanes2014entanglement}
\bibinfo{author}{Ll~\surnamestart Masanes\surnameend},
  \bibinfo{author}{Markus~P \surnamestart M\"uller\surnameend},
  \bibinfo{author}{David \surnamestart Perez-Garcia\surnameend} \&
  \bibinfo{author}{Remigiusz \surnamestart Augusiak\surnameend}
  (\bibinfo{year}{2014}): \emph{\bibinfo{title}{{Entanglement and the
  three-dimensionality of the Bloch ball}}}.
\newblock {\sl \bibinfo{journal}{Journal of Mathematical Physics}}
  \bibinfo{volume}{55}(\bibinfo{number}{12}), p. \bibinfo{pages}{122203},
  \doi{10.1063/1.4903510}.

\bibitemdeclare{article}{selby2018reconstructing}
\bibitem{selby2018reconstructing}
\bibinfo{author}{John~H \surnamestart Selby\surnameend},
  \bibinfo{author}{Carlo~Maria \surnamestart Scandolo\surnameend} \&
  \bibinfo{author}{Bob \surnamestart Coecke\surnameend} (\bibinfo{year}{2018}):
  \emph{\bibinfo{title}{Reconstructing quantum theory from diagrammatic
  postulates}}.
\newblock {\sl \bibinfo{journal}{arXiv preprint arXiv:\arxiv{1802.00367}}}.

\bibitemdeclare{article}{sokal2011}
\bibitem{sokal2011}
\bibinfo{author}{Alan~D \surnamestart Sokal\surnameend} (\bibinfo{year}{2011}):
  \emph{\bibinfo{title}{A really simple elementary proof of the uniform
  boundedness theorem}}.
\newblock {\sl \bibinfo{journal}{The American Mathematical Monthly}}
  \bibinfo{volume}{118}(\bibinfo{number}{5}), pp. \bibinfo{pages}{450--452},
  \doi{10.4169/amer.math.monthly.118.05.450}.

\bibitemdeclare{article}{westerbaan2016universal}
\bibitem{westerbaan2016universal}
\bibinfo{author}{Abraham \surnamestart Westerbaan\surnameend} \&
  \bibinfo{author}{Bas \surnamestart Westerbaan\surnameend}
  (\bibinfo{year}{2016}): \emph{\bibinfo{title}{A universal property for
  sequential measurement}}.
\newblock {\sl \bibinfo{journal}{Journal of Mathematical Physics}}
  \bibinfo{volume}{57}(\bibinfo{number}{9}), p. \bibinfo{pages}{092203},
  \doi{10.1063/1.4961526}.

\bibitemdeclare{inproceedings}{westerbaan2016paschke}
\bibitem{westerbaan2016paschke}
\bibinfo{author}{Abraham \surnamestart Westerbaan\surnameend} \&
  \bibinfo{author}{Bas \surnamestart Westerbaan\surnameend}
  (\bibinfo{year}{2017}): \emph{\bibinfo{title}{Paschke Dilations}}.
\newblock In \bibinfo{editor}{Ross \surnamestart Duncan\surnameend} \&
  \bibinfo{editor}{Chris \surnamestart Heunen\surnameend}, editors: {\sl
  \bibinfo{booktitle}{{\rm Proceedings 13th International Conference on}
  Quantum Physics and Logic, {\rm Glasgow, Scotland, 6-10 June 2016}}}, {\sl
  \bibinfo{series}{Electronic Proceedings in Theoretical Computer Science}}
  \bibinfo{volume}{236}, \bibinfo{publisher}{Open Publishing Association}, pp.
  \bibinfo{pages}{229--244}, \doi{10.4204/EPTCS.236.15}.

\bibitemdeclare{phdthesis}{bramthesis}
\bibitem{bramthesis}
\bibinfo{author}{Abraham~A. \surnamestart Westerbaan\surnameend}
  (\bibinfo{year}{2018}): \emph{\bibinfo{title}{The Category of Von Neumann
  Algebras}}.
\newblock Ph.D. thesis, \bibinfo{school}{Radboud Universiteit Nijmegen}.
\newblock \bibinfo{note}{Preprint available at
  \url{https://bram.westerbaan.name/thesis.pdf}}.

\bibitemdeclare{phdthesis}{basthesis}
\bibitem{basthesis}
\bibinfo{author}{Bas \surnamestart Westerbaan\surnameend}
  (\bibinfo{year}{2018}): \emph{\bibinfo{title}{Dagger and dilations in the
  category of von Neumann algebras}}.
\newblock Ph.D. thesis, \bibinfo{school}{Radboud Universiteit Nijmegen}.
\newblock \bibinfo{note}{Preprint available at
  \url{https://westerbaan.name/~bas/thesis.pdf}}.

\bibitemdeclare{misc}{fundident}
\bibitem{fundident}
\bibinfo{author}{John \surnamestart van~de Wetering\surnameend}
  (\bibinfo{year}{2018}): \emph{\bibinfo{title}{An algebraic semi-automated
  proof of the fundamental identity of Jordan algebras}}.
\newblock \bibinfo{note}{arXiv:\arxiv{1807.00164}}.

\bibitemdeclare{article}{wetering2018reconstruction}
\bibitem{wetering2018reconstruction}
\bibinfo{author}{John \surnamestart van~de Wetering\surnameend}
  (\bibinfo{year}{2018}): \emph{\bibinfo{title}{An effect-theoretic reconstruction of quantum theory}}.
\newblock {\sl \bibinfo{journal}{Arxiv preprint \arxiv{1801.05798}}}.

\bibitemdeclare{article}{wetering2018sequential}
\bibitem{wetering2018sequential}
\bibinfo{author}{John \surnamestart van~de Wetering\surnameend}
  (\bibinfo{year}{2018}): \emph{\bibinfo{title}{{Sequential Product Spaces are
  Jordan Algebras}}}.
\newblock {\sl \bibinfo{journal}{Arxiv preprint \arxiv{1803.11139}}}.

\bibitemdeclare{article}{wetering2018characterisation}
\bibitem{wetering2018characterisation}
\bibinfo{author}{John \surnamestart van~de Wetering\surnameend}
  (\bibinfo{year}{2018}): \emph{\bibinfo{title}{Three characterisations of the
  sequential product}}.
\newblock {\sl \bibinfo{journal}{Journal of Mathematical Physics}}
  \bibinfo{volume}{59}(\bibinfo{number}{8}), \doi{10.1063/1.5031089}.

\bibitemdeclare{inproceedings}{wilce2016royal}
\bibitem{wilce2016royal}
\bibinfo{author}{Alexander \surnamestart Wilce\surnameend}
  (\bibinfo{year}{2017}): \emph{\bibinfo{title}{A Royal Road to Quantum Theory
  (or Thereabouts)}}.
\newblock In \bibinfo{editor}{Ross \surnamestart Duncan\surnameend} \&
  \bibinfo{editor}{Chris \surnamestart Heunen\surnameend}, editors: {\sl
  \bibinfo{booktitle}{{\rm Proceedings 13th International Conference on}
  Quantum Physics and Logic, {\rm Glasgow, Scotland, 6-10 June 2016}}}, {\sl
  \bibinfo{series}{Electronic Proceedings in Theoretical Computer Science}}
  \bibinfo{volume}{236}, \bibinfo{publisher}{Open Publishing Association}, pp.
  \bibinfo{pages}{245--254}, \doi{10.4204/EPTCS.236.16}.

\end{thebibliography}

\appendix

\section{Basic structure of EJAs}
EJAs are commonly defined to be finite-dimensional. 
The infinite-dimensional algebras we study are also known as JH-algebras \cite{chu2017infinite,chu2011jordan}
(where we additionally require the existence of a unit). 
In this appendix we will give a relatively self-contained proof that the EJAs we use are JBW-algebras and that every element has a finite spectral decomposition.

\begin{proposition}
For every EJA~$E$
there is a constant~$r>0$ such that, for all~$a,b\in E$,
\begin{equation}
	\label{prod-bounded}
	\|\,a*b\,\|_2\ \leq\  r\,\|a\|_2\,\|b\|_2,
\end{equation}
where~$\|c\|_2 \equiv \sqrt{\left<c,c\right>}$
denotes the Hilbert norm.
In particular, $*$ is uniformly continuous with respect to the
Hilbert norm.
\end{proposition}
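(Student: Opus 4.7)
The plan is to exploit the Hilbert space structure together with the symmetry $\langle a*b, c\rangle = \langle b, a*c\rangle$ and two classical tools from functional analysis: the Hellinger--Toeplitz theorem and the Banach--Steinhaus (uniform boundedness) principle.

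First, for each fixed $a \in E$ I would consider the left-multiplication operator $L_a \colon E \to E$ defined by $L_a(b) = a * b$. It is linear, everywhere defined, and symmetric with respect to the inner product, since $\langle L_a b, c\rangle = \langle a*b, c\rangle = \langle b, a*c\rangle = \langle b, L_a c\rangle$. By the Hellinger--Toeplitz theorem (an everywhere defined symmetric operator on a Hilbert space is automatically closed, hence bounded by the closed graph theorem), $L_a$ is a bounded operator on $E$.

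Second, I would apply Banach--Steinhaus to the family $\{L_a : a \in E, \|a\|_2 \leq 1\}$. For each fixed $b \in E$, commutativity of the Jordan product yields $L_a b = a*b = b*a = L_b a$, so for every $a$ with $\|a\|_2 \leq 1$ one has $\|L_a b\|_2 \leq \|L_b\|_{op}$, which is finite by the previous step. The family is thus pointwise bounded, so the uniform boundedness principle furnishes a constant $r > 0$ with $\sup_{\|a\|_2 \leq 1} \|L_a\|_{op} \leq r$. Linearity of $a \mapsto L_a$ then extends this to $\|L_a\|_{op} \leq r \|a\|_2$ for every $a \in E$, which is exactly~\eqref{prod-bounded}. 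For the final clause I would note that such a bilinear estimate immediately yields uniform continuity of $*$ on each bounded subset of $E \times E$ via the standard splitting
\[ \|a*b - a'*b'\|_2\ \leq\ r\|a\|_2\,\|b-b'\|_2\, +\, r\|a-a'\|_2\,\|b'\|_2. \]

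The main obstacle is the first step: one must promote the purely algebraic symmetry $\langle a*b, c\rangle = \langle b, a*c\rangle$ into an actual operator-norm bound on each $L_a$, and I do not see a way around invoking Hellinger--Toeplitz (or equivalently the closed graph theorem) to do so. Once individual boundedness of the $L_a$ is in hand, the passage to a uniform bound in $a$ is routine via Banach--Steinhaus, and the joint estimate~\eqref{prod-bounded} then follows by homogeneity.
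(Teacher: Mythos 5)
Your proposal is correct and follows essentially the same route as the paper: the paper's first step is precisely the standard uniform-boundedness proof of the Hellinger--Toeplitz theorem (applied to the symmetric operator $L_a$, using $\inn{a*b,c}=\inn{b,a*c}$), which you invoke by name, and the second step --- Banach--Steinhaus over $\{L_a : \|a\|_2\leq 1\}$ with pointwise bounds supplied by commutativity $L_a b = L_b a$ --- is identical. Your closing remark that the bilinear estimate gives uniform continuity only on bounded sets is a correct and slightly more careful reading of the final clause than the paper's own phrasing.
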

\begin{proof}
The trick is to apply
the \emph{uniform boundedness principle} (see e.g.~\cite{sokal2011})
twice,
which states that any collection~$\mathcal{T}$
of bounded operators
from a Banach space~$X$ to a normed vector space~$Y$
that is bounded pointwise,
i.e.~$\sup_{T\in\mathcal{T}} \|Tx\|<\infty$
for all~$x\in X$,
is uniformly bounded
in the sense that $\sup_{T\in \mathcal{T}} \|T\|<\infty$.

For the moment fix~$a\in E$.
Our first step is to show that the operator
	$a*(\,\cdot\,)\colon E\to E$ is bounded.
	To this end
 consider
the collection of linear functionals $\left<b,a*(\,\cdot\,)\right>
\colon E\to\R$,
where $b\in E$ with $\|b\|_2\leq 1$.
These are bounded operators,
since~$\left<b,a*(\,\cdot\,)\right>
= \left<a*b,(\,\cdot\,)\right>$,
and as a collection they 
are bounded pointwise,
since
$\left|\left<b,a*c\right>\right| \leq \|b\|_2\, \|a*c\|_2
\leq\|a*c\|_2<\infty$.
Hence 
\begin{equation*}
r_a\ :=\ \sup_{\|b\|_2\leq 1}\|
\left<a*b,(\,\cdot\,)\right>\|\ <\ \infty
\end{equation*}
by the uniform boundedness principle.
Since in particular
	$(\|a*b\|_2)^2 = \left<a*b,a*b\right>
\leq r_b\|a*b\|_2$ for all~$b\in E$ with~$\|b\|_2\leq 1$,
we get
$\|a*b\|_2\leq r_a$ for all~$b\in E$
with $\|b\|_2\leq 1$,
and thus~$\|a*b\|_2\leq r_a\|b\|_2$
for any~$b\in E$.
In other words,
the linear operator $a*(\,\cdot\,)\colon E\to E$
is bounded.

Now, to prove equation~\eqref{prod-bounded}
it suffices to show that $\sup_{\|a\|_2\leq 1} \|a*(\,\cdot\,)\|$ is finite.
For this, in turn,
it suffices,
by the uniform boundedness principle,
to show given~$b\in E$ that $\sup_{\|a\|_2\leq 1}\|a*b\|_2 <
	\infty$.
	Since~$\|a*b\|_2=\|b*a\|_2\leq \|b*(\,\cdot\,)\|\,
	\|a\|_2
	\leq \|b*(\,\cdot\,)\|<\infty$
	for all~$a\in E$ with $\|a\|_2\leq 1$,
	this is indeed the case.
\end{proof}
To proceed we need some basic algebraic properties
of Jordan algebras, which are most conveniently expressed
with some additional notation.
\begin{notation}
Let~$E$ be a Jordan algebra.
\begin{enumerate}
\item
We write $a^0:=1$,\quad $a^1:= a$,\quad  $a^2:= a*a$,\quad  
		$a^3:= a*a^2$,\quad 
$a^4:=a*a^3$, \dots .
Note that since~$*$ is not associative
it's not a priori clear whether equations like  $a^4 = a^2 * a^2$ hold.
\item
Given~$a\in E$ we denote the linear operator $E\to E\colon b\mapsto a*b$
by~$L_a$.

Given two linear operators $S,T\colon E\to E$
we write  $[S,T]:= ST-TS$ for the commutator of~$S$ and~$T$.
\end{enumerate}

\end{notation}
\begin{proposition}
\label{jaeqs}
Given a Jordan algebra~$E$,
and $a,b,c\in E$, we have
\begin{enumerate}
\item
\label{jaeqs-1}
$[L_a,L_{a^2}] = 0$;\quad
$[L_b,L_{a^2}] = 2[L_{a*b},L_{a}]$;\quad and\quad  
$[L_a,L_{b*c}] + [L_b,L_{c*a}] + [L_c,L_{a*b}] = 0$;
\item
\label{jaeqs-2}
$L_{a*(b*c)} = L_aL_{b*c} + L_b L_{c*a} + L_c L_{a*b} - L_b L_a L_c - 
L_c L_a L_b$;
\item
\label{jaeqs-3}
$a^n*(b*a^m)=(a^n*b)*a^m$
\quad and\quad
		$a^n * a^m = a^{n+m}$\quad
	for all $n,m\in \N$.
\end{enumerate}
\end{proposition}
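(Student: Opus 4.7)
All three parts can be proved by successively \emph{polarizing} (i.e.~linearizing) the Jordan identity $(a*b)*a^2 = a*(b*a^2)$. Below I sketch what I would do for each part and flag where the main effort lies.

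For Part~(\ref{jaeqs-1}), the first equality is simply the Jordan identity read as $L_{a^2} L_a = L_a L_{a^2}$. For the second, I would substitute $a \mapsto a + tb$ into $[L_a, L_{a^2}] = 0$. Using $L_{(a+tb)^2} = L_{a^2} + 2t L_{a*b} + t^2 L_{b^2}$ and $L_{a+tb} = L_a + tL_b$, the coefficient of $t^1$ is exactly $[L_b, L_{a^2}] + 2[L_a, L_{a*b}] = 0$, as required. The third equality then comes from a second polarization $a \mapsto a + tc$ of the second and reading off the $t^1$-coefficient.

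For Part~(\ref{jaeqs-2}), I would polarize the Jordan identity twice, this time tracking both sides as operators. A linearization $a \mapsto a + td$ in $a*(b*a^2) = (a*b)*a^2$ gives, at order~$t$, the operator identity $L_{b*a^2} = L_{a^2} L_b + 2 L_{a*b} L_a - 2 L_a L_b L_a$. Linearizing this once more with $a \mapsto a + sc$ and reading off the $s^1$-coefficient yields a formula whose difference with the stated one is precisely the cyclic commutator sum $[L_a,L_{b*c}] + [L_b,L_{c*a}] + [L_c,L_{a*b}]$, which vanishes by the third assertion of Part~(\ref{jaeqs-1}).

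For Part~(\ref{jaeqs-3}), observe that $a^n*(b*a^m) = (a^n*b)*a^m$ is exactly $[L_{a^n}, L_{a^m}](b)=0$, so both claims amount to establishing (i) $a^n * a^m = a^{n+m}$ and (ii) $[L_{a^n},L_{a^m}]=0$. I would prove (i) and (ii) together by strong induction on $n + m$. The base cases are immediate: $[L_a, L_{a^2}] = 0$ from Part~(\ref{jaeqs-1}), and the small-index multiplicative relations hold by the definition of $a^k$. For the inductive step, the second identity of Part~(\ref{jaeqs-1}) applied with $b = a^k$ (where the inductive hypothesis justifies $a * a^k = a^{k+1}$) gives $[L_{a^k}, L_{a^2}] = 2[L_{a^{k+1}}, L_a]$, which together with the third identity of Part~(\ref{jaeqs-1}) instantiated at powers of $a$ propagates commutation from lower to higher exponents. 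Once commutation is in hand at the relevant indices, $a^n * a^m = L_a L_{a^{n-1}} a^m = L_{a^{n-1}} L_a a^m = a^{n-1} * a^{m+1}$ and induction finish~(i).

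The main obstacle is this interlocking induction in Part~(\ref{jaeqs-3}): the order of the inductive invocations has to be arranged carefully so that each use of a Part~(\ref{jaeqs-1}) identity at powers of $a$ has its multiplicative inputs already covered by weaker cases of~(i). Part~(\ref{jaeqs-2}) involves the heaviest polarization bookkeeping but is otherwise routine, and Part~(\ref{jaeqs-1}) is essentially mechanical once the polarization template is fixed.
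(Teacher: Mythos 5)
Your treatment of parts (\ref{jaeqs-1}) and (\ref{jaeqs-2}) is correct and is essentially the paper's argument: both are polarizations of the Jordan identity, and your second-order linearization for part (\ref{jaeqs-2}) lands on the stated formula modulo the cyclic commutator sum from part (\ref{jaeqs-1}), exactly as you say. (The paper reaches the same identity slightly differently, by noting that $(b*c)*(a*d)+(c*a)*(b*d)+(a*b)*(c*d)$ is symmetric under $a\leftrightarrow d$ and rewriting both sides as operators in $d$, but the content is the same.)

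For part (\ref{jaeqs-3}) you diverge from the paper, and while you have correctly identified this as the crux, your sketch does not actually resolve it. You propose to propagate $[L_{a^n},L_{a^m}]=0$ upward using the part-(\ref{jaeqs-1}) identities at powers of $a$; this can be made to work, but the closing step is not the routine bookkeeping the sketch suggests. Concretely, setting $C_i:=[L_{a^i},L_{a^{N-i}}]$, the third identity at exponents $i+j+k=N$ gives $C_i+C_j+C_k=0$ (using lower-level instances of $a^j*a^k=a^{j+k}$), whence $C_{k+1}=C_k+C_1$, so $C_k=kC_1$; combined with $C_{N-1}=-C_1$ this forces $NC_1=0$ and hence all commutators at level $N$ vanish. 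Without some such argument, ``propagates commutation from lower to higher exponents'' is an assertion rather than a proof. The paper sidesteps all of this by using part (\ref{jaeqs-2}): the recursion $L_{a^n}=2L_aL_{a^{n-1}}+L_{a^{n-2}}L_{a^2}-L_a^2L_{a^{n-2}}-L_{a^{n-2}}L_a^2$ exhibits every $L_{a^n}$ as a polynomial in the two commuting operators $L_a$ and $L_{a^2}$, so all the $L_{a^n}$ commute simultaneously, and $a^n*a^m=a^{n+m}$ then follows by the single induction on $m$ that you also describe. I would adopt that route: it lets part (\ref{jaeqs-2}) do the work and dissolves the interlocking induction you flag as the main obstacle. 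One small correction: $a*a^k=a^{k+1}$ is the definition of $a^{k+1}$ (plus commutativity), not something requiring an inductive hypothesis.
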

\begin{proof}
\ref{jaeqs-1}.\ 
The first equation, $[L_a,L_{a^2}]=0$,
is just a reformulation
of the Jordan identity:
\begin{equation*}
		L_aL_{a^2}b\,\equiv\, a*(b*a^2) \ =\  (a*b)*a^2
	\,\equiv\, L_{a^2}L_a b.
\end{equation*}
Note that $[L_{a+b},L_{(a+b)^2}]-
[L_{a-b},L_{(a-b)^2}] = 
4[L_a,L_{a*b}] + 2[L_b,L_{a^2}] + 2[L_b,L_{b^2}]$---just expand both sides.
Applying
$[L_d,L_{d^2}]=0$ with $d=b, \,a+b,\, a-b$,
we get  $[L_b,L_{a^2}]
	= -2[L_{a},L_{a*b}]=2[L_{a*b},L_a]$.
Similarly, one gets
$[L_a,L_{b*c}] + [L_b,L_{c*a}] + [L_c,L_{a*b}] = 0$
by expanding
	$2[L_{(a+c)*b},L_{a+c}]-[L_b,L_{(a+c)^2}]\equiv 0$.

\ref{jaeqs-2}.\ 
Since $[L_a,L_{b*c}] + [L_b,L_{c*a}] + [L_c,L_{a*b}] = 0$,
we have, for all~$d\in E$,
\begin{equation*}
	(L_a L_{b*c}  +
	L_b L_{c*a} +
	L_c L_{a*b})d \ = \ 
(b*c)*(a*d)\,+\,
(c*a)*(b*d)\,+\,
(a*b)*(c*d).
\end{equation*}
Since the right-hand side 
of this equation is invariant under a switch of the roles 
of~$a$ and~$d$,
so must be the left-hand side,
which gives us the professed equality after some rewriting:
\begin{alignat*}{3}
(L_a L_{b*c}  +
L_b L_{c*a} +
L_c L_{a*b})d \ &= \ 
(L_d L_{b*c}  +
L_b L_{c*d} +
	L_c L_{d*b})a\qquad&& a \leftrightarrow d\\
\ &\equiv \ (L_{a*(b*c)}\,+\,
L_bL_aL_c \,+\,
	L_cL_aL_b)d\qquad&&\text{rewriting}.
\end{alignat*}

\ref{jaeqs-3}.\ 
By repeatedly applying the equation for~$L_{a*(b*c)}$
from~\ref{jaeqs-2}
it is clear that~$L_{a^n}$
and~$L_{a^m}$
may both be written as polynomial
expressions
in~$L_{a}$ and $L_{a^2}$.
Since~$L_a$ and~$L_{a^2}$ commute
by the Jordan identity,
so will~$L_{a^n}$ and~$L_{a^m}$ commute.
Whence $a^n *(b* a^m) = (a^n * b)*a^m$.

Finally, seeing that $a^n*a^m=a^{n+m}$ is only a matter of induction over~$m$.
Indeed, $a^n*a^0=a^n*1=a^n$,
and if~$a^n*a^m=a^{n+m}$ for all~$n$ for some fixed~$m$,
we get $a^n*a^{m+1}
= a^n*(a*a^m)=(a^n*a)*a^m=a^{n+1}*a^m=a^{n+m+1}$.
\end{proof}

\begin{corollary}\label{cor:assocalg}
    Let $a\in E$ be an element of an EJA. Let $C(a)$ denote the closure of the algebra generated by $a$, then $C(a)$ is a commutative associative algebra.
\end{corollary}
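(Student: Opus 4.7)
The plan is to first establish the claim for the (not yet closed) algebra $A(a)$ generated by $a$ and then lift the result to the closure $C(a)$ by a continuity argument using the boundedness of the Jordan product that was established in the first proposition of the appendix.

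For the first step, I would show that $A(a)$ is precisely the linear span of the powers $\{a^n : n \in \N\}$ (with $a^0 = 1$). Indeed, by iterated application of part \ref{jaeqs-3} of Proposition \ref{jaeqs}, any bracketing of a product $a^{n_1} * (a^{n_2} * (\cdots * a^{n_k}))$ collapses to $a^{n_1 + \cdots + n_k}$, so the linear span of powers is already closed under $*$. Commutativity on $A(a)$ is immediate from commutativity of $*$ on $E$. For associativity on $A(a)$, it suffices by bilinearity to check it on the basis elements, and there one has
\begin{equation*}
(a^n * a^m) * a^k \ = \ a^{n+m} * a^k \ = \ a^{n+m+k} \ = \ a^n * a^{m+k} \ = \ a^n * (a^m * a^k),
\end{equation*}
again by Proposition \ref{jaeqs}.\ref{jaeqs-3}.

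For the second step, let $x, y, z \in C(a)$ and choose sequences $(x_n), (y_n), (z_n)$ in $A(a)$ converging to $x, y, z$ respectively in the Hilbert norm. The bound $\|u * v\|_2 \leq r\,\|u\|_2\,\|v\|_2$ established in the first appendix proposition shows that $*$ is jointly continuous: if $u_n \to u$ and $v_n \to v$, then $\|u_n * v_n - u * v\|_2 \leq \|u_n * (v_n - v)\|_2 + \|(u_n - u) * v\|_2 \leq r\,\|u_n\|_2\,\|v_n - v\|_2 + r\,\|u_n - u\|_2\,\|v\|_2 \to 0$. Consequently $x_n * y_n \to x * y$, and likewise for other combinations. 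Commutativity $x * y = y * x$ and associativity $(x * y) * z = x * (y * z)$ then follow by passing to the limit in the corresponding identities on $A(a)$. It remains only to note that $C(a)$ is closed under $*$ by the same continuity argument, so these identities make sense inside $C(a)$.

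The only step that requires any care is the continuity of $*$, but that is handed to us on a plate by the uniform bound proved at the start of the appendix; everything else is a direct unwinding of Proposition \ref{jaeqs}.\ref{jaeqs-3}.
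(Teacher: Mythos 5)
Your proposal is correct and takes essentially the same route as the paper's own proof: associativity on the linear span of the powers of $a$ via Proposition~\ref{jaeqs}.\ref{jaeqs-3}, followed by passage to the closure using the continuity of the Jordan product established in the first proposition of the appendix. The paper merely states these two steps more tersely; your version fills in the same details.
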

\begin{proof}
Point~\ref{jaeqs-3} of proposition~\ref{jaeqs}
allows us to see that the smallest Jordan subalgebra of~$E$
that contains~$a$
consists of all real polynomials $\sum_{n=0}^N \lambda_n a^n$ over~$a$,
and is therefore associative.
Since the Jordan multiplication is continuous 
(by proposition~\ref{prod-bounded}) 
the closure $C(a)$ of this associative subalgebra
will again be an associative subalgebra.
\end{proof}

\begin{proposition}\label{prop:associsdisc}
    An associative EJA is isomorphic as an algebra to $\R^n$ with pointwise multiplication for some $n\in \N$.
\end{proposition}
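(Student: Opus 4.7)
The plan is to exploit the interplay between the algebraic and Hilbert-space structure. Since $E$ is commutative and associative, the left-multiplication operators satisfy $L_aL_b = L_{ab} = L_bL_a$, so $\{L_a : a\in E\}$ is a commuting family of bounded self-adjoint operators on the Hilbert space $E$; moreover $L_a(1) = a$, so the unit $1$ is a cyclic vector for this family.

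My first step is to invoke the spectral theorem for a commuting family of bounded self-adjoint operators with a cyclic vector (equivalently, Gelfand's theorem applied to the operator-norm closure of the real commutative unital $*$-algebra generated by the $L_a$). This yields a compact Hausdorff space $X$, a finite Radon measure $\mu$ on $X$, a Hilbert space isomorphism $U\colon E \to L^2(X,\mu)$ sending $1$ to the constant function $1$, and an injective unital $*$-homomorphism $a \mapsto \hat a$ from $E$ into $L^\infty(X,\mu)$ such that $UL_a U^{-1}$ is pointwise multiplication by $\hat a$. Since $U(a) = U(L_a 1) = \hat a$, the unitary $U$ is simultaneously the algebra-theoretic embedding, so the multiplication in $E$ corresponds to pointwise multiplication on $L^2(X,\mu)$.

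The key algebraic input then is that $L^2(X,\mu)$ must itself be closed under pointwise multiplication; equivalently, $f^2 \in L^2$ whenever $f \in L^2$, i.e., $L^2(X,\mu) \subseteq L^4(X,\mu)$. A closed-graph argument on the bilinear pointwise product produces a constant $C$ with $\|f\|_4 \leq C\|f\|_2$ for every $f \in L^2$. Applied to indicator functions of measurable sets, this first rules out any diffuse part of $\mu$ (by rescaling indicators of sets of arbitrarily small positive measure) and then shows that every atom of $\mu$ has mass at least $C^{-4}$; combined with $\mu(X) = \|1\|_2^2 < \infty$, this leaves at most finitely many atoms, say $n$. Hence $L^2(X,\mu) \cong \R^n$ as $\R$-algebras with pointwise product, and therefore $E \cong \R^n$.

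The main obstacle is making the spectral/Gelfand step fully rigorous in this real, possibly infinite-dimensional setting, where the commuting family may well be uncountable. One way around this is to complexify to $E \otimes_\R \C$, apply the standard complex spectral theorem there, and restrict to the real subspace at the end. An alternative that avoids spectral theory altogether would be to work one element at a time: use continuous functional calculus on each commutative closed subalgebra $C(a)$ to produce a supply of spectral idempotents, decompose $1$ along a maximal (Zorn-selected) orthogonal family of nonzero idempotents, and use the equivalence of the order-unit and Hilbert norms (theorem~\ref{theor:chu}) to bound the size of that family, forcing it to be finite and yielding the direct-product decomposition directly.
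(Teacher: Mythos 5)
Your route is correct, but it is genuinely different from the paper's. The paper also starts from the regular representation $a\mapsto L_a$, but then goes order-theoretically: it checks that $L$ is an order embedding of $E$ into $B(E)$, that products of positive elements are positive (via $a^2*b^2=(a*b)^2$), and invokes Kadison's representation theorem to get $E\cong C(X)$ as an algebra; the Hilbert-space structure enters only at the end, where the Riesz representative $\hat{x}$ of each point evaluation is shown to satisfy $\hat{x}g=\inn{\hat{x},g}\hat{x}$, forcing $\hat{x}(y)=0$ for $y\neq x$, so every singleton is open and $X$ is finite by compactness. You instead diagonalize the commuting self-adjoint family $\{L_a\}$ with cyclic vector $1$ to get $E\cong L^2(X,\mu)$ with pointwise product, and extract finiteness from the embedding $L^2\subseteq L^4$ applied to indicator functions. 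Both arguments are sound; yours trades Kadison's theorem for the spectral theorem (and the attendant complexification fuss in the real setting), and trades the paper's elementary reproducing-kernel computation for a measure-theoretic one. Two remarks. First, you do not need the closed graph theorem: the paper's inequality \eqref{prod-bounded} already gives $\norm{f^2}_2\leq r\norm{f}_2^2$, i.e.\ $\norm{f}_4\leq\sqrt{r}\,\norm{f}_2$ directly, which immediately bounds every set of positive measure below by $r^{-2}$. Second, your fallback alternative is circular as stated: the equivalence of the order-unit and Hilbert norms in theorem~\ref{theor:chu} is proved in this paper (proposition~\ref{prop:equivalenttopology}) using the spectral decomposition of corollary~\ref{cor:spectral}, which itself rests on the present proposition, so you cannot appeal to it here; likewise a continuous functional calculus on $C(a)$ is essentially the singly-generated case of what is being proved. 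Stick with your main spectral-theorem argument.
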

\begin{proof}
    Let $E$ be an associative EJA and let $L_a:E\rightarrow E$ denote the Jordan multiplication operator of $a\in E$. This gives rise to a map $L:E\rightarrow B(E)$ that is linear (since $L_{a+b}=L_a+L_b$ and $L_{\lambda a} = \lambda L_a$), multiplicative (by associativity $L_{a*b} = L_aL_b$), unital ($L_1=\id$), injective (since $L_a1 = a$) and positive ($L_a$ is self-adjoint and $L_{a^2} = L_a^2$ is therefore a positive operator). The map is also order-reflecting. To see this we first note that the algebra $C(L_a)$ generated by $L_a$ in $B(E)$ is equal to the set $L(C(a)):=\{L_b~;~b\in C(a)\}$. Now if $L_a\geq 0$ in $B(E)$, then it has a square root which lies in $C(L_a)=L(C(a))$, so that we can find a $b\in C(a)$ with $L_{b^2}=L_b^2 = \sqrt{L_a}^2 = L_a$ so that $a$ is indeed positive in $E$. We conclude that $E$ is order-isomorphic to some closed subspace of $B(E)$ and thus that $E$ is a complete Archimedean order unit space.

    The product of positive elements is positive, since
    indeed: $a^2*b^2 = (a*a)*(b*b) = (a*b)*(a*b) = (a*b)^2$.
    By Kadison's representation theorem~\cite{kadison1951representation}
        any complete Archimedean order unit space with unital multiplication
        that preserves positivity (like~$E$), is isomorphic as an algebra
        to~$C(X)$, the real-valued continuous functions on some
            compact Hausdorff space~$X$.

    Thus without loss of generality, we may assume~$E = C(X)$.
    It is sufficient to show~$X$ is discrete (for then~$X$
        must be finite by compactness).
    For~$x \in X$, write~$\delta_x \colon C(X) \to \R$ for
        the bounded linear map~$\delta_x(f)= f(x)$.
    As~$E=C(X)$ is assumed to be a Hilbert space,
        there must be an~$\hat{x} \in C(X)$
        with~$\delta_x(f) = \inn{\hat{x},f} = f(x)$ for all~$f \in C(X)$.
    As~$\inn{\hat{x}g,f} = \inn{\hat{x},gf}
    = (gf)(x) = g(x)f(x) = \inn{\hat{x},g}\inn{\hat{x},f} =
    \inn{\inn{\hat{x},g}\hat{x},f}$ for all~$f \in C(X)$,
    we must have~$\hat{x}g = \inn{\hat{x},g}\hat{x}$.
    In particular~$\hat{x}\hat{y} = \inn{\hat{x},\hat{y}}\hat{x}$
    and with similar reasoning~$\hat{x}\hat{y} = \inn{\hat{x},\hat{y}}\hat{y}$.
    Assume~$x\neq y$.
    Then~$\hat{x} \neq \hat{y}$, but by the
        previous~$\inn{\hat{x},\hat{y}} \hat{x} =  \inn{\hat{x},\hat{y}} \hat{y}$.
        So that necessarily~$0 = \inn{\hat{x},\hat{y}}=\hat{x}(y)$ for all~$y \neq x$.
        As~$\hat{x} \neq 0$ and~$\hat{x}$ is continuous,
            we see~$\{x\}$ is open and so~$X$ is discrete.
\end{proof}

\begin{corollary}\label{cor:spectral}
    Let $a$ be an element of an EJA. Then there exist real numbers $\lambda_i$ and orthogonal idempotents $p_i$ such that $a = \sum_{i=1}^n \lambda_i p_i$ for some $n$.
\end{corollary}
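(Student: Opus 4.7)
The plan is to reduce to the classification of commutative associative EJAs. First I would form the closed unital Jordan subalgebra $C(a) \subseteq E$ generated by $a$. By Corollary~\ref{cor:assocalg}, $C(a)$ is both commutative and associative. Being a closed subspace of the Hilbert space $E$, it inherits the inner product and is itself a real Hilbert space; the compatibility $\inn{b*c,d} = \inn{c, b*d}$ and the Jordan identity survive restriction; and the unit $1 = a^0$ lies in $C(a)$. Thus $C(a)$ is an associative EJA in the sense of the paper.

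Next I would apply Proposition~\ref{prop:associsdisc} to obtain an algebra isomorphism $\varphi \colon C(a) \to \R^n$ for some $n \in \N$. Writing $\varphi(a) = (\lambda_1,\ldots,\lambda_n)$, and letting $p_i := \varphi^{-1}(e_i)$ where $e_i$ denote the standard basis vectors of $\R^n$, the relations $e_i \cdot e_j = \delta_{ij} e_i$ pull back under $\varphi$ to give idempotents $p_i \in C(a) \subseteq E$ satisfying $p_i * p_j = 0$ for $i \neq j$, i.e.\ pairwise orthogonal idempotents. Linearity of $\varphi^{-1}$ then delivers the desired expression $a = \sum_{i=1}^n \lambda_i p_i$.

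The heavy lifting is already accomplished by Corollary~\ref{cor:assocalg} and Proposition~\ref{prop:associsdisc}; the corollary at hand is essentially a one-step unpacking. The only point that might need a brief justification is that $C(a)$ really qualifies as an EJA in its own right, but this is routine: completeness in the Hilbert norm follows from $C(a)$ being closed in $E$, and all other axioms descend from $E$. Hence no genuine obstacle arises.
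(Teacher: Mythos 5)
Your proposal is correct and follows exactly the paper's own route: form the closed associative subalgebra $C(a)$ via Corollary~\ref{cor:assocalg}, identify it with $\R^n$ via Proposition~\ref{prop:associsdisc}, and pull back the standard idempotents. The only difference is that you spell out the (routine) verification that $C(a)$ is itself an EJA, which the paper leaves implicit.
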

\begin{proof}
    Let $C(a)$ denote the EJA generated by $a$. This is an associative algebra by corollary \ref{cor:assocalg} so that by proposition \ref{prop:associsdisc} we have $C(a)\cong \R^n$ for some $n$. Since $\R^n$ is obviously spanned by orthogonal idempotents we see that indeed $a= \sum_{i=1}^n \lambda_i p_i$.
\end{proof}

\begin{proposition}\label{prop:selfduality}
    An element $a\in E$ is positive (i.e.\ a square) if and only if $\inn{a,b}\geq 0$ for all positive $b$.
\end{proposition}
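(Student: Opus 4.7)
The plan is to handle the two directions separately, each leveraging tools already built up.

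For the $(\Rightarrow)$ direction, suppose $a \geq 0$, so $a = c^2$ for some $c \in E$, and let $b \geq 0$. I would write
\begin{equation*}
\inn{a,b} \;=\; \inn{c^2, b} \;=\; \inn{Q_c 1, b} \;=\; \inn{1, Q_c b},
\end{equation*}
using $Q_c 1 = c^2$ and self-adjointness of $Q_c$ (proposition~\ref{prop:quadraticrep}, points 2 and 3). Since $Q_c$ is a positive operator (proposition~\ref{prop:quadraticrep}, point 7) and $b \geq 0$, we get $Q_c b \geq 0$, so $Q_c b = e * e$ for some $e \in E$. Then applying the compatibility of the Jordan product with the inner product,
\begin{equation*}
\inn{1, Q_c b} \;=\; \inn{1, e*e} \;=\; \inn{1*e, e} \;=\; \inn{e,e} \;\geq\; 0.
\end{equation*}

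For the $(\Leftarrow)$ direction, suppose $\inn{a,b} \geq 0$ for every positive $b$. By the spectral theorem (corollary~\ref{cor:spectral}) write $a = \sum_{i=1}^n \lambda_i p_i$ with $p_i$ pairwise orthogonal idempotents, which I may assume nonzero. Each $p_i$ is positive (being $p_i * p_i$), so by hypothesis $\inn{a, p_i} \geq 0$. Orthogonality of the $p_i$ gives $\inn{p_j, p_i} = 0$ for $j \neq i$, hence $\inn{a, p_i} = \lambda_i \inn{p_i, p_i}$. Since $\inn{p_i, p_i} > 0$, we conclude $\lambda_i \geq 0$. Setting $c := \sum_i \sqrt{\lambda_i} p_i$, the orthogonality relations $p_i * p_j = \delta_{ij} p_i$ (which follow from $\inn{p_i, p_j} = 0$ together with $p_i, p_j$ being idempotents) yield $c * c = \sum_i \lambda_i p_i = a$, so $a$ is a square.

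There is no real obstacle here: both directions are a short computation once the quadratic representation, the spectral theorem, and the Hilbert-space compatibility of $*$ are in hand. The only minor care needed is in the $(\Rightarrow)$ step, where one must resist the temptation to manipulate $\inn{c*c, d*d}$ directly and instead route through $Q_c$ to exploit its positivity; and in the $(\Leftarrow)$ step, where one uses that orthogonality of idempotents in the inner-product sense coincides with $p_i * p_j = 0$, as noted in the definition of orthogonal idempotents earlier in the paper.
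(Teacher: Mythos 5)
Your proof is correct, and the backward direction is essentially the paper's own argument (decompose $a=\sum_i\lambda_i p_i$ into orthogonal idempotents, pair with each $p_j$ to force $\lambda_j\geq 0$); you merely add the explicit square root $c=\sum_i\sqrt{\lambda_i}\,p_i$, which the paper leaves implicit. The forward direction, however, takes a genuinely different route. The paper quotes from the literature the more primitive fact that $\inn{p,a}\geq 0$ for an idempotent $p$ and positive $a$, and then spectrally decomposes $b$ into idempotents with nonnegative coefficients. You instead write $a=Q_c 1$, move $Q_c$ across the inner product, and invoke superoperator positivity of $Q_c$ (proposition~\ref{prop:quadraticrep}, point 7) to reduce to $\inn{1,e*e}=\inn{e,e}\geq 0$. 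This is slicker, but be aware of a circularity hazard in the context where this proposition lives: it sits in the appendix, whose purpose is a self-contained derivation culminating in the fact that EJAs are JB(W)-algebras, and the cited proof that $Q_a$ is a positive operator is a JB-algebra result; within the appendix, the JB-algebra structure is only established \emph{after} (and using) self-duality, via corollary~\ref{cor:ejaous} and proposition~\ref{prop:EJAisJB}. So your argument is fine as a standalone proof that leans on the external literature (just as the paper's citation of Chu does), but it cannot be slotted into the appendix's bootstrapping chain without first giving an independent proof that $Q_c$ preserves positivity. The paper's choice of the weaker external input ($\inn{p,a}\geq 0$ for idempotents) keeps the dependency lighter.
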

\begin{proof}
    If $p$ is an idempotent then $\inn{p,a}\geq 0$ if $a$ is positive \cite[p. 107]{chu2011jordan}. As a result if $b=\sum_i \lambda_i p_i$ with $\lambda_i \geq 0$ and with the $p_i$ idempotents we have $\inn{a,b}\geq 0$. Now for the other direction suppose $\inn{a,b}\geq 0$ for all positive $b$. Write $a=\sum_i \lambda_i p_i$ with the $\lambda_i$ not necessary positive, and where the $p_i$ are orthogonal. We then have $\inn{p_i,p_j} = \inn{1,p_i*p_j} = \inn{1,0} = 0$ so that $0\leq \inn{a,p_j} = \lambda_j \inn{p_j,p_j}$. Since $p_j\neq 0$ this is only possible when $\lambda_j\geq 0$. This holds for all $j$ so that we conclude that $a\geq 0$.
\end{proof}
\begin{corollary}\label{cor:ejaous}
    Let $E$ be an EJA. The set of positive elements is closed under addition. More specifically $E$ is an Archimedean order unit space.
\end{corollary}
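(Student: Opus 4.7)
The plan is to leverage the self-duality established in Proposition~\ref{prop:selfduality} together with the finite spectral decomposition of Corollary~\ref{cor:spectral}; once positivity is characterized purely in terms of the inner product, each order-unit axiom reduces to an elementary estimate.

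First I would handle the additive closure of the positive cone. If $a,b\geq 0$, then for every positive $c\in E$ Proposition~\ref{prop:selfduality} gives $\inn{a,c}\geq 0$ and $\inn{b,c}\geq 0$, whence $\inn{a+b,c}\geq 0$; a second application of self-duality then yields $a+b\geq 0$. Closure under nonnegative scalars is immediate, since $\lambda(b*b) = (\sqrt{\lambda}\,b)*(\sqrt{\lambda}\,b)$ for $\lambda\geq 0$. To see that the cone is proper, suppose both $a\geq 0$ and $-a\geq 0$. Applying Proposition~\ref{prop:selfduality} again, $\inn{a,-a}\geq 0$, so $\inn{a,a}\leq 0$; but also $\inn{a,a}\geq 0$ (taking $b=a$ in self-duality), so $\inn{a,a}=0$ and hence $a=0$ by definiteness of the Hilbert inner product.

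Next I would check that $1$ is an order unit. Given $a\in E$, use Corollary~\ref{cor:spectral} to write $a=\sum_{i=1}^n \lambda_i p_i$ with orthogonal idempotents $p_i$, and set $M := \max_i |\lambda_i|$. A short check shows $P:=\sum_i p_i$ is itself an idempotent (using $p_i*p_j=0$ for $i\neq j$), so $1-P$ is also an idempotent and hence positive; then
\begin{equation*}
    M\cdot 1 - a \ =\ \sum_{i=1}^n (M-\lambda_i)\,p_i \,+\, M(1-P)
\end{equation*}
is a sum of nonnegative multiples of positive elements, which is positive by the additive closure just proved. Symmetrically $a+M\cdot 1\geq 0$, so $-M\cdot 1 \leq a \leq M\cdot 1$, and taking any integer $n\geq M$ delivers the order-unit bound.

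Finally, for the Archimedean property, suppose $a\leq \tfrac{1}{n}\cdot 1$ for every $n\geq 1$. Then $\tfrac{1}{n}\cdot 1 - a\geq 0$, so by self-duality $\inn{a,c}\leq \tfrac{1}{n}\inn{1,c}$ for every positive $c\in E$; letting $n\to\infty$ gives $\inn{a,c}\leq 0$ for all positive $c$, which by Proposition~\ref{prop:selfduality} applied to $-a$ means $-a\geq 0$, i.e.\ $a\leq 0$. I do not anticipate any genuine obstacle here: the only nontrivial ingredient is self-duality, which is already in hand, and the role of the spectral decomposition is merely to guarantee that any $a$ admits a finite bound in terms of $1$.
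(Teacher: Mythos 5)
Your proof is correct. The additive-closure part is identical to the paper's: both reduce $a+c\geq 0$ to self-duality (Proposition~\ref{prop:selfduality}). Where you diverge is in the order-unit and Archimedean axioms. The paper delegates both to Proposition~\ref{prop:associsdisc}: the associative subalgebra $C(a)$ is isomorphic to $\R^n$, which is Archimedean and has $1$ as an order unit, and these properties are transported back to $E$. You instead work directly: for the order-unit bound you take the spectral decomposition $a=\sum_i\lambda_i p_i$, pad with the idempotent $1-\sum_i p_i$, and exhibit $M\cdot 1\pm a$ as a sum of positive elements (legitimately invoking the additive closure you just proved); for the Archimedean property you pair $\frac{1}{n}1-a$ against an arbitrary positive $c$, let $n\to\infty$, and apply self-duality to $-a$. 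Your route buys a small amount of rigor: the paper's transfer of the inequality $a\leq\frac1n 1$ from $E$ down to $C(a)\cong\R^n$ silently requires that positivity in $E$ of an element of $C(a)$ implies coordinatewise positivity in $\R^n$, a point your argument sidesteps entirely since everything happens in $E$ via the inner product. The cost is that your argument leans on the spectral decomposition in the order-unit step, whereas the paper's phrasing makes the dependence on $C(a)\cong\R^n$ explicit in one place; both dependencies are already available at this point in the appendix, so neither creates a circularity.
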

\begin{proof}
    By the previous proposition $a\geq 0$ if and only if $\inn{a,b}\geq 0$ for all $b\geq 0$. But then if $c\geq 0$ we obviously have $\inn{a+c,b} = \inn{a,b}+\inn{c,b} \geq 0$ for all positive $b$ so that indeed $a+c\geq 0$. Suppose now that $a\leq \frac{1}{n}1$ for all $n\in \N$. By proposition \ref{prop:associsdisc} the associative algebra generated by $a$ (which contains $\frac{1}{n}1$) is isomorphic to $\R^n$. Since this space is Archimedean we conclude that $a\leq 0$ in $\R^n$ so that also $a\leq 0$ in $E$. In the same way we can find for any $a\in E$ a number $n\in \N$ so that $-n1 \leq a\leq n1$ so that $E$ is indeed an Archimedean order unit space.
\end{proof}

\begin{proposition}\label{prop:equivalenttopology}
    Let $E$ be an EJA. The topologies induced by the Hilbert norm and by the order unit norm are equivalent.
\end{proposition}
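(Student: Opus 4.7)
The plan is to prove the two quantitative inequalities $\|a\|_2\leq\|1\|_2\norm{a}$ and $\norm{a}\leq r\|a\|_2$, where $r$ is the constant from equation~\eqref{prod-bounded}; together these give equivalence of the norms and hence of the topologies. The pivotal preliminary observation is that every nonzero idempotent $p$ satisfies the uniform lower bound $\|p\|_2\geq 1/r$: applying~\eqref{prod-bounded} to $p=p*p$ gives $\|p\|_2=\|p*p\|_2\leq r\|p\|_2^2$, and dividing by $\|p\|_2>0$ yields the bound.

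Given this, I would apply the spectral decomposition (corollary~\ref{cor:spectral}) to an arbitrary $a\in E$, writing $a=\sum_i\lambda_i p_i$ with pairwise orthogonal atomic idempotents $p_i$. The identity $\inn{p_i,p_j}=\inn{1,p_i*p_j}=0$ for $i\neq j$ makes the $p_i$ mutually Hilbert-orthogonal, and the inequality $a\leq(\max_i|\lambda_i|)\cdot 1$ (with its analogue for $-a$) gives $\norm{a}\leq\max_i|\lambda_i|$. Selecting an index $k$ where $\lambda_k^2$ is maximal, Pythagoras yields
\[
\|a\|_2^2\ =\ \sum_i\lambda_i^2\,\|p_i\|_2^2\ \geq\ \lambda_k^2\,\|p_k\|_2^2\ \geq\ \tfrac{1}{r^2}\max_i\lambda_i^2\ \geq\ \tfrac{1}{r^2}\norm{a}^2,
\]
which rearranges to the harder inequality $\norm{a}\leq r\|a\|_2$. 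The reverse direction is immediate from the same decomposition: $a^2=\sum_i\lambda_i^2\,p_i\leq\norm{a}^2\cdot 1$, so $\|a\|_2^2=\inn{1,a^2}\leq\norm{a}^2\,\|1\|_2^2$.

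The conceptual crux is extracting the uniform lower bound on idempotents from the continuity of the Jordan product; once this is in hand, the spectral theorem and Pythagoras do the rest.
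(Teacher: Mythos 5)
Your proof is correct, and it replaces the paper's key lemma with a genuinely different and slicker argument. Both proofs share the same skeleton: the easy inequality $\norm{a}_2 \leq \norm{1}_2\,\norm{a}$ via self-duality, and the hard inequality reduced, through the spectral decomposition $a = \sum_i \lambda_i p_i$ and Hilbert-orthogonality of the $p_i$, to a uniform lower bound on the Hilbert norm of nonzero idempotents. Where you diverge is in how that lower bound is obtained. The paper sets $R = \inf\{\norm{p}_2 : p \neq 0,\ p^2 = p\}$ and rules out $R = 0$ by contradiction: from a sequence of idempotents with $\norm{p_k}_2 \leq 2^{-k}/k$ it builds the increasing Cauchy sequence $q_n = \sum_{k=1}^n k\,p_k$, whose Hilbert-norm limit would dominate every $q_n$ and hence have unbounded order-unit norm. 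This leans on completeness of the Hilbert space and closedness of the positive cone. Your observation that $\norm{p}_2 = \norm{p * p}_2 \leq r\,\norm{p}_2^2$, hence $\norm{p}_2 \geq 1/r$ for $p \neq 0$, extracts the same bound in one line directly from the submultiplicativity estimate \eqref{prod-bounded}, with the added benefit of an explicit constant ($\norm{a} \leq r\,\norm{a}_2$ rather than a nonconstructive $R$). The only housekeeping worth making explicit is that you should discard zero idempotents from the spectral decomposition before selecting the index $k$ maximizing $\lambda_k^2$, exactly as the paper does when it insists the $p_i$ are nonzero; with that, Pythagoras closes the argument as you describe.
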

\begin{proof}
    In order to show that the topologies are the same we need to show that the norms are equivalent. Let $\norm{a}$ denote the order unit norm and $\norm{a}_2$ the Hilbert norm. We need to find constants $c,d\in \R_{>0}$ such that $c\norm{a}_2\leq \norm{a}\leq d\norm{a}_2$ for all $a\in E$.

    Note that $\norm{a}_2^2 = \inn{a,a}\leq \norm{a}^2 \inn{1,1} = \norm{a}^2\norm{1}_2^2$ by self-duality, so that we already have one side of the inequality.

    Any $a\in E$ can be written as $a=\sum_i \lambda_i p_i$ where the $p_i$ are nonzero by the previous corollary so that $\norm{a}=\max\{\lvert \lambda_i \rvert\}$. Now $\norm{a}_2^2 = \inn{a,a} = \sum_i \lambda_i^2 \norm{p_i}_2^2 \geq \sum_i \lambda_i^2 \inf\{\norm{p_j}_2^2\} \geq \max\{\lvert \lambda_i^2 \rvert\} \inf\{\norm{p_j}_2^2\} = \norm{a}^2 \inf\{\norm{p_j}_2^2\}$ so if we can find some constant $R>0$ such that $\norm{p}_2 \geq R$ for all nonzero idempotents $p$ we are done.

    Let $R=\inf_{p\neq 0, p^2=p} \norm{p}_2$. If $R\neq 0$ we are done, so suppose $R=0$. In this case there exists a sequence of idempotents $(p_i)$ such that $\norm{p_i}_2\rightarrow 0$. We can then pick a subsequence such that $\norm{p_k}_2 \leq 2^{-k}/k$. Now let $q_n = \sum_{i=1}^n ip_i$. Let $n\geq m$. We have $\norm{q_n - q_m}_2 = \norm{\sum_{k=m}^n k p_k}_2 \leq \sum_{k=m}^n k\norm{p_k}_2 \leq \sum_{k=m}^n k 2^{-k}/k$ so that the $(q_n)$ form a Cauchy sequence in the Hilbert norm. Since $E$ is a Hilbert space it must converge to some $q\in E$ and since it is also an increasing sequence and the set of positive elements is closed in the Hilbert norm by proposition \ref{prop:selfduality} we must have $q\geq q_n$ so that $\norm{q}\geq \norm{q_n} \geq n$ for all $n$ which is a contradiction.
\end{proof}

\begin{proposition}\label{prop:EJAisJB}
    Let $E$ be an EJA. Then $E$ is a JB-algebra.
\end{proposition}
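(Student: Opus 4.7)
The plan is to verify the three defining axioms of a JB-algebra with respect to the order unit norm $\|\cdot\|$ from Corollary~\ref{cor:ejaous}: (JB1) submultiplicativity $\|a*b\|\leq \|a\|\,\|b\|$, (JB2) the identity $\|a^2\|=\|a\|^2$, and (JB3) the monotonicity $\|a^2\|\leq \|a^2+b^2\|$, together with completeness in $\|\cdot\|$.

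Completeness and the axioms (JB2) and (JB3) should be immediate from the earlier appendix results. As $E$ is by definition a real Hilbert space and hence complete in $\|\cdot\|_2$, Proposition~\ref{prop:equivalenttopology} yields completeness in $\|\cdot\|$ as well. For (JB2), I would invoke the spectral decomposition $a=\sum_i \lambda_i p_i$ (Corollary~\ref{cor:spectral}) and use that the associative subalgebra $C(a)$ is isomorphic to some $\R^n$ (Proposition~\ref{prop:associsdisc}) to read off $\|a\|=\max_i |\lambda_i|$ inside $C(a)$---the order unit norm restricts correctly from $E$ since the positive cones agree under functional calculus---so that $a^2=\sum_i \lambda_i^2 p_i$ gives $\|a^2\|=\max_i \lambda_i^2 = \|a\|^2$. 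The same decomposition yields the bound $a^2\leq \|a\|^2\cdot 1$, which I shall need in a moment. For (JB3), note that $a^2\leq a^2+b^2$ because $b^2\geq 0$, and the order unit norm is monotone on the positive cone: if $0\leq x\leq y$ then $x\leq \|y\|\cdot 1$, so $\|x\|\leq \|y\|$.

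The one axiom requiring genuine work is submultiplicativity (JB1). The key observation is that both $(a+b)^2$ and $(a-b)^2$ are positive, being squares; expanding them yields
\[
    -\tfrac{1}{2}(a^2+b^2)\ \leq\ a*b\ \leq\ \tfrac{1}{2}(a^2+b^2).
\]
Combining this with the bounds $a^2\leq \|a\|^2\cdot 1$ and $b^2\leq \|b\|^2\cdot 1$ from the spectral step above produces the intermediate estimate $\|a*b\|\leq \tfrac{1}{2}(\|a\|^2+\|b\|^2)$. Applying this same estimate to the rescaled pair $(ta,\, b/t)$ for any $t>0$ and optimizing (the minimum is attained at $t=\sqrt{\|b\|/\|a\|}$) then collapses it to the sharp submultiplicative bound $\|a*b\|\leq \|a\|\,\|b\|$.

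I do not anticipate any real obstacle since every ingredient has been prepared: the spectral theorem, positivity of squares, the Archimedean order unit structure, and the equivalence of the two norms. The one modest subtlety is the two-step nature of (JB1)---first the weaker bound in terms of $\|a\|^2+\|b\|^2$, then the rescaling trick to obtain the clean product bound.
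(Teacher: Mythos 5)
Your proof is correct, but it takes a genuinely different route from the paper. The paper first establishes that $E$ is a complete Archimedean order unit space (via Corollary~\ref{cor:ejaous} and Proposition~\ref{prop:equivalenttopology}) and then invokes \cite[Theorem 1.11]{alfsen2012geometry}, which reduces the JB-algebra property to the single order-theoretic implication $-1\leq a\leq 1 \implies 0\leq a^2\leq 1$; that implication is then read off from the spectral decomposition $a=\sum_i\lambda_i p_i$ in one line. You instead verify the three norm axioms of a JB-algebra directly: $\|a^2\|=\|a\|^2$ and $a^2\leq\|a\|^2\cdot 1$ from the spectral theorem, $\|a^2\|\leq\|a^2+b^2\|$ from positivity of squares plus monotonicity of the order unit norm on the cone, and submultiplicativity from the sandwich $-\tfrac12(a^2+b^2)\leq a*b\leq\tfrac12(a^2+b^2)$ (obtained by expanding $(a\pm b)^2\geq 0$) followed by the standard rescaling $a\mapsto ta$, $b\mapsto b/t$. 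Both arguments rest on exactly the same prepared ingredients---the finite spectral theorem, closure of the positive cone under addition, and the equivalence of the two norms---so neither is at risk; the paper's version is shorter because it outsources the derivation of the norm axioms from the order structure to the cited theorem, whereas yours is self-contained and makes explicit where each axiom actually comes from. The only points worth tightening in a written-up version are the justification that $\|a\|=\max_i|\lambda_i|$ for an orthogonal decomposition (which follows from self-duality, as in Proposition~\ref{prop:selfduality}, and is also used without comment in the paper's Proposition~\ref{prop:equivalenttopology}) and the trivial case $a=0$ or $b=0$ in the rescaling step.
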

\begin{proof}
    By corollary \ref{cor:ejaous} $E$ is an Archimedean order unit space. By definition $E$ is complete in the Hilbert norm topology and by the previous proposition this topology is equivalent to the order unit topology. We conclude that $E$ is a complete Archimedean order unit space. By \cite[Theorem 1.11]{alfsen2012geometry}, $E$ will then be a JB-algebra when the implication $-1\leq a\leq 1 \implies 0\leq a^2\leq 1$ holds. So suppose $-1\leq a\leq 1$. By the spectral theorem $a=\sum_i \lambda_i p_i$ and we must have $-1\leq \lambda_i \leq 1$. But then $a^2 = \sum_i \lambda_i^2 p_i$ so that indeed $0\leq a^2 \leq 1$.
\end{proof}

\begin{proposition}\label{prop:dircomplete}
    Let $E$ be an EJA. Then $E$ is bounded directed complete and furthermore every state is normal.
\end{proposition}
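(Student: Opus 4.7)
The plan is to exploit the Hilbert space structure of $E$ to produce suprema as weak limits, and then read off normality of states almost for free. Let $(a_i)_{i\in I}$ be a bounded increasing net in $E$; by translating we may assume $0\leq a_i\leq c$ for some $c\in E$. Since the order unit norm and the Hilbert norm are equivalent (proposition~\ref{prop:equivalenttopology}) and $0\leq a_i\leq c$ yields $\|a_i\|\leq\|c\|$, the net is uniformly bounded in the Hilbert norm.

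For each $b\geq 0$ the scalar net $\inn{a_i,b}$ is increasing (self-duality applied to $a_j-a_i\geq 0$ and $b$, proposition~\ref{prop:selfduality}) and bounded above by $\inn{c,b}$, so it converges to some real number $L(b)$. Writing an arbitrary $d\in E$ as a difference of positives via the spectral decomposition (corollary~\ref{cor:spectral}), we extend $L$ to a linear functional on $E$, which is bounded by the uniform Hilbert-norm bound on $(a_i)$ together with Cauchy--Schwarz. By the Riesz representation theorem there is a unique $a\in E$ with $L(d)=\inn{a,d}$, so $\inn{a_i,d}\to\inn{a,d}$ for every $d\in E$.

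Next I would check that $a=\sup_i a_i$. For any $b\geq 0$ and any fixed $i$, the limit $\inn{a-a_i,b}=\lim_j\inn{a_j-a_i,b}$ is a limit of non-negative numbers (take $j\geq i$), hence non-negative; self-duality then gives $a\geq a_i$. Conversely, if $d$ is any upper bound, then $\inn{d-a,b}=\lim_i\inn{d-a_i,b}\geq 0$ for every $b\geq 0$, so $d\geq a$ by self-duality. Thus $E$ is bounded directed complete. For the normality of a state $\omega$, observe that by self-duality and the Riesz representation theorem $\omega$ is of the form $\omega(x)=\inn{x,s}$ for some positive $s$; weak convergence then gives $\omega(a_i)\to\omega(a)=\omega(\sup_i a_i)$, which is exactly normality.

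The single subtle point is the passage from pointwise-in-$b$ convergence of $\inn{a_i,b}$ to the existence of a representing element $a\in E$. This is where the Hilbert space structure is essential: I need the limiting functional $L$ to be bounded, and for this I must know in advance that the net is norm-bounded. The equivalence of the Hilbert and order unit norms (proposition~\ref{prop:equivalenttopology}) is exactly what supplies this, so this previously proved result is the main technical input. Once $L$ is bounded, Riesz produces the candidate supremum automatically, and all the order-theoretic verifications reduce to self-duality.
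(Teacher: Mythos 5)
Your proposal is correct and follows essentially the same route as the paper: produce the candidate supremum as the Riesz representative of the functional $b\mapsto\sup_i\inn{a_i,b}$, verify it is the least upper bound via self-duality, and obtain normality from the inner-product representation of states. The only difference is that you explicitly justify boundedness of the limiting functional via the norm equivalence and Cauchy--Schwarz, a point the paper leaves implicit.
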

\begin{proof}
    Let $(a_i)_{i\in I}$ be a bounded upwards directed set. By translation we can take all $a_i$ to be positive. Define for $b\geq 0$ the state $\omega(b):= \sup_{i\in I} \inn{a_i,b}$. This supremum exists since the $a_i$ are bounded and the inner product between positive elements is again positive. This map can obviously be extended by linearity to the entirety of $E$. Since $E$ is a Hilbert space we conclude that there must exist an $a\in E$ such that $\omega(b)=\inn{a,b}$ for all $b\in E$. We claim that this $a$ is the lowest upper bound. That it is an upper bound follows by the self-duality of the order. Suppose $a_i\leq c$ for some $c$. Then $c-a_i\geq 0$ so that $\inn{c-a_i,b}\geq 0$ for all $b\geq 0$ so that $\inn{c,b}\geq \inn{a_i,b}$. By taking the supremum over the $a_i$'s we see then that $\inn{c,b}\geq \inn{a,b}$. Again by self-duality we conclude that $c\geq a$.

    For any state $\omega^\prime: E\rightarrow \R$ we can find a $b\in E$ such that $\omega^\prime = \inn{\cdot, b}$. As the previous argument shows, suprema of elements are defined in terms of these states so that they must preserve those suprema.
\end{proof}

\begin{proposition}\label{prop:atomicsum}
    Let $p$ be an idempotent of an EJA. Then there exist orthogonal atomic idempotents $p_i$ such that $p = \sum_i p_i$.
\end{proposition}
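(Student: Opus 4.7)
The plan is to apply Zorn's lemma to produce a maximal orthogonal family of atomic idempotents below~$p$ and then show its partial-sum supremum must equal~$p$. Consider the poset~$\mathcal F$ of orthogonal families of atomic idempotents below~$p$ ordered by inclusion; chains have unions as upper bounds, and~$\mathcal F$ is non-empty (the empty family lies in it, and a single atom below~$p$ can always be extracted via the spectral theorem). Let~$\{p_i\}_{i\in I}$ be a maximal element, and define~$q_F:=\sum_{i\in F} p_i$ for finite~$F\subseteq I$. Since the net~$(q_F)$ is directed and bounded above by~$p$, the supremum~$q:=\sup_F q_F$ exists by proposition~\ref{prop:dircomplete}.

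Next I would argue that~$q$ is itself an idempotent. Each~$q_F$ is idempotent, and since~$0\leq q-q_F\leq 1$ we have~$(q-q_F)^2\leq q-q_F$ (as this inequality holds in the associative subalgebra~$C(q-q_F)\cong \mathbb R^n$). By normality of every state~$\omega$, we get $\omega(q-q_F)\to 0$ and hence~$\omega((q-q_F)^2)\to 0$. Expanding~$\omega(q^2)$ as~$\omega(q_F^2)+2\omega(q_F*(q-q_F))+\omega((q-q_F)^2)$ and applying the Cauchy--Schwarz inequality for the positive semi-definite form~$(a,b)\mapsto\omega(a*b)$ to the cross term, all terms but~$\omega(q_F^2)=\omega(q_F)\to\omega(q)$ vanish in the limit, giving~$\omega(q^2)=\omega(q)$. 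As states separate points, $q^2=q$.

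Finally, I would show~$q=p$ by contradiction. Since~$q_F\leq p$ for all~$F$, we have~$q\leq p$, and so proposition~\ref{prop:EJAidempotent} gives~$p*q=q$; a direct expansion then yields~$(p-q)^2=p-q$, so~$p-q$ is an idempotent. If~$p-q\neq 0$, the spectral theorem (corollary~\ref{cor:spectral}) decomposes~$p-q$ as a sum of orthogonal atomic idempotents (the coefficients must equal~$1$ since~$p-q$ is itself an idempotent). Pick any such atomic summand~$r$. Then~$r\leq p-q\leq 1-q=q^\perp$, so~$q*r=0$ by proposition~\ref{prop:EJAidempotent}, and by self-duality~$0\leq \inn{p_i,r}\leq \inn{q,r}=0$, so~$r$ is orthogonal to every~$p_i$. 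Adjoining~$r$ to~$\{p_i\}$ therefore contradicts maximality, and we conclude~$q=p$, i.e.~$p=\sum_i p_i$.

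The main obstacle is the middle step: showing that the order supremum~$q$ of the directed net of idempotents~$q_F$ is itself an idempotent. The argument above via normality and Cauchy--Schwarz is self-contained, but one could instead quote the standard JBW-algebra fact that directed suprema in the idempotent lattice agree with suprema in the ambient order.
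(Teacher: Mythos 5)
Your overall strategy (Zorn's lemma plus a maximality argument) is genuinely different from the paper's, but it has a circularity at its core: you repeatedly invoke \emph{atomic} idempotents obtained ``via the spectral theorem,'' yet Corollary~\ref{cor:spectral} only produces orthogonal idempotents, not atomic ones. (The main-text spectral proposition that does assert atomicity is downstream of the present proposition.) The existence of an atomic idempotent below a given nonzero idempotent is essentially the content of what you are trying to prove --- it is the ``type I'' property established right after Proposition~\ref{prop:atomicsum}. Concretely, if no atoms existed below~$p$, your poset~$\mathcal F$ would have the empty family as its unique (maximal) element, $q$ would be~$0$, and the final step --- extracting an atomic summand~$r$ of~$p-q=p$ --- could not even begin. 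The paper avoids this by never assuming atoms exist: it recursively splits a non-atomic~$p$ into two strictly smaller orthogonal nonzero idempotents and proves the splitting terminates, so the atoms appear as the leaves of a finite tree.

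A second, related gap is finiteness. Your maximal orthogonal family could a priori be infinite, in which case $p=\sum_i p_i$ holds only as an order-supremum of partial sums, not as the finite algebraic sum the statement is used for (the very next proposition reads off \emph{finite rank} from this decomposition). To rule out an infinite family you need the paper's key observation: pairwise orthogonal idempotents generate an associative subalgebra (via $[L_a,L_{b^2}]+2[L_b,L_{a*b}]=0$), and an infinite such family would yield an infinite-dimensional associative EJA, contradicting Proposition~\ref{prop:associsdisc}. That same finite-dimensionality argument is also what lets the paper's descending construction both terminate and bottom out in atoms, so it is not optional machinery --- it is the engine of the proof. Your middle step (that the directed supremum of the~$q_F$ is an idempotent, via normality and Cauchy--Schwarz) is correct but becomes unnecessary once finiteness is in hand.
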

\begin{proof}
	If $p$ is atomic we are done, so suppose it is not. Then by definition we can find $0\leq a \leq p$ such that $a\neq \lambda p$ for some $\lambda$. Using corollary~\ref{cor:spectral} write $a=\sum_i \lambda_i q_i$. If all the $q_i = p$ then $a=\lambda p$ so there must be a $q_i\neq p$. Pick this one. We have $\lambda_i q_i \leq p$. By proposition \ref{prop:EJAidempotent} we then have $Q_p (\lambda_i q_i) = \lambda_i q_i$. This of course implies $Q_p q_i = q_i$ so that again by proposition \ref{prop:EJAidempotent} we have $q_i\leq p$. We can now repeat this procedure with $p$ replaced by $q_i$ and $p-q_i$ to get a family of orthogonal idempotents that sum up to $p$. We claim that this process stops after a finite amount of iterations. By assumption the resulting idempotents are then atomic.

    Suppose the process does not halt after a finite amount of iterations. Then we are left with a countable collection of orthogonal idempotents $(q_i)_i$. By equation \eqref{jaeqs-1} we have in any Jordan algebra for any $a$ and $b$: $[L_a,L_{b^2}] + 2[L_b, L_{a*b}] = 0$ Let $a=q_i$ and $b=q_j$ with $i\neq j$ so that $a*b=0$, then we conclude that $[L_{q_i},L_{q_j}] = 0$ and thus that $q_i*(a*q_j) = (q_i*a)*q_j$. The algebra spanned by the $(q_i)_i$ is therefore associative. As this algebra is necessarily infinite-dimensional this is in contradiction to proposition \ref{prop:associsdisc}.
\end{proof}

\begin{proposition}
    Let $E$ be an EJA. Then $E$ is a type I JBW-algebra of finite rank.
\end{proposition}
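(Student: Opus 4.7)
The plan is to assemble the structural facts already proven in the appendix; no genuinely new argument is required. First, Proposition~\ref{prop:EJAisJB} tells us that $E$ is a JB-algebra, and Proposition~\ref{prop:dircomplete} tells us that $E$ is bounded directed complete with every state normal. Together these are precisely the defining conditions of a JBW-algebra (see \cite{hanche1984jordan}), so the JBW part comes for free.

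For the finite rank claim, I would apply Proposition~\ref{prop:atomicsum} directly to the unit $1 \in E$, which is of course an idempotent. This yields orthogonal atomic idempotents $p_1, \ldots, p_n$ with $1 = \sum_{i=1}^n p_i$, and the existence of such a finite orthogonal decomposition of the unit into atoms is exactly what it means for a JBW-algebra to have finite rank (equal to $n$).

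For the type I property, recall that a JBW-algebra is of type I when every non-zero idempotent dominates an abelian idempotent; in particular it suffices that every non-zero idempotent dominates an atomic one, since atoms are automatically abelian. But this too is immediate from Proposition~\ref{prop:atomicsum}: any non-zero idempotent $p$ splits as a finite sum $p = \sum_i p_i$ of orthogonal atoms $p_i$, each of which satisfies $p_i \leq p$. Hence the abelian-projection-richness condition holds, and $E$ is of type I.

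The only real hurdle is pedantic rather than mathematical, namely matching the wording of ``type I'' and ``finite rank'' to the conventions of \cite{hanche1984jordan}; under the standard definitions both follow instantly from Propositions~\ref{prop:EJAisJB}, \ref{prop:dircomplete} and \ref{prop:atomicsum}, so the proof collapses to citing these three results and invoking Proposition~\ref{prop:atomicsum} once with $p = 1$.
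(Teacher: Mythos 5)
Your proof is correct and takes essentially the same route as the paper's: it derives the JBW property from Propositions~\ref{prop:EJAisJB} and~\ref{prop:dircomplete}, and both the type I and finite rank claims from Proposition~\ref{prop:atomicsum}. The only difference is cosmetic --- you note explicitly that atomic idempotents are abelian to match the standard definition of type I, whereas the paper simply takes ``an atomic idempotent below every idempotent'' as its working criterion.
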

\begin{proof}
    By proposition \ref{prop:EJAisJB} $E$ is a JB-algebra. By definition, a JBW-algebra is a JB-algebra that is bounded directed complete and that is separated by normal states. Proposition \ref{prop:dircomplete} therefore has established that $E$ is indeed a JBW-algebra. A JBW-algebra is of type I if below every idempotent we can find an atomic idempotent. This is true by proposition \ref{prop:atomicsum}. By this same proposition we can write the identity as a finite sum of atomic idempotents, so that the space is indeed of finite rank.
\end{proof}

\begin{corollary}\label{appendixjnw}
    Let $E$ be an EJA. Then there exists a finite-dimensional EJA $E_{\text{fin}}$ and an EJA $E_{\text{inf}}$ that is a direct sum of infinite-dimensional spin-factors such that $E$ is isomorphic as an EJA to $E_{\text{fin}} \oplus E_{\text{inf}}$.
\end{corollary}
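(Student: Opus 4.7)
The plan is to invoke the classification of type~I JBW-factors of finite rank. The previous proposition shows that $E$ is a type~I JBW-algebra of finite rank, so the first step is to decompose $E$ into type~I JBW-factors by splitting along central idempotents. Concretely, the center $Z(E)$ is a commutative JBW-subalgebra; because $E$ has finite rank, $Z(E)$ must also have finite rank and so (by the same reasoning used for associative EJAs in proposition~\ref{prop:associsdisc}) is isomorphic to $\R^k$ for some finite~$k$. Its minimal idempotents $z_1,\dots,z_k$ then yield a decomposition $E \cong Q_{z_1}(E) \oplus \cdots \oplus Q_{z_k}(E)$, in which each summand $E_i := Q_{z_i}(E)$ is a type~I JBW-factor of rank at most that of $E$.

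Next, I would classify each factor $E_i$ individually. Any type~I JBW-factor of finite rank is isomorphic either to one of the Jordan matrix algebras $M_n(F)^{\sa}$ for $F \in \{\R,\C,\mathbb{H}\}$ with $n \geq 3$, to the exceptional algebra $M_3(\mathbb{O})^{\sa}$, or to a spin factor $H \oplus \R$ for some real Hilbert space $H$ (see for instance Hanche-Olsen and Størmer, Chapter~6). The algebras of the first two types are all finite-dimensional, while a spin factor is finite-dimensional precisely when its underlying Hilbert space is.

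Finally, I would simply regroup: let $E_{\text{fin}}$ be the direct sum of the $E_i$ which are finite-dimensional (including any finite-dimensional spin factors), and let $E_{\text{inf}}$ be the direct sum of the remaining summands, each of which is then an infinite-dimensional spin factor. The isomorphism $E \cong E_{\text{fin}} \oplus E_{\text{inf}}$ follows immediately from the central decomposition, and since the number $k$ of summands is finite $E_{\text{fin}}$ is a genuine finite-dimensional EJA.

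The main obstacle is the appeal to the classification of type~I JBW-factors of finite rank, which is a substantial external result not developed in the appendix. A fully self-contained proof would instead fix a Jordan frame (a maximal family of orthogonal atomic idempotents summing to~$1$) in each factor $E_i$, invoke the Peirce decomposition relative to this frame, and then reproduce the usual coordinatization argument: factors of rank~$\geq 3$ are coordinatized by an associative composition algebra and are thereby forced to be finite-dimensional, while factors of rank~$\leq 2$ are either $\R$ or spin factors. This avoids citing the classification at the cost of reproducing a nontrivial portion of JBW-theory.
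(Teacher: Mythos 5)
Your proposal follows essentially the same route as the paper: reduce to a finite direct sum of type~I JBW-factors of finite rank via the central decomposition, cite the classification of such factors from Hanche-Olsen and St\o rmer to see that each is either finite-dimensional or an infinite-dimensional spin factor, and regroup the summands. The paper's own proof is just a terser version of this (it asserts the factor decomposition and the classification in two sentences), so your added detail on the center and your remark about a self-contained coordinatization alternative are embellishments rather than a different argument.
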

\begin{proof}
    By the previous corollary $E$ is a type I JBW-algebra of finite rank. It is therefore isomorphic to a finite direct sum of type I JBW-factors of finite rank. These factors have been classified in \cite{hanche1984jordan}. They are either finite-dimensional or they are infinite-dimensional spin-factors.
\end{proof}

\end{document}